\begin{document}

\newtheorem{tm}{Theorem}[section]
\newtheorem{prop}[tm]{Proposition}
\newtheorem{defin}[tm]{Definition} 
\newtheorem{coro}[tm]{Corollary}
\newtheorem{lem}[tm]{Lemma}
\newtheorem{assumption}[tm]{Assumption}
\newtheorem{rk}[tm]{Remark}
\newtheorem{nota}[tm]{Notation}
\numberwithin{equation}{section}

\newcommand{\stk}[2]{\stackrel{#1}{#2}}
\newcommand{\dwn}[1]{{\scriptstyle #1}\downarrow}
\newcommand{\upa}[1]{{\scriptstyle #1}\uparrow}
\newcommand{\nea}[1]{{\scriptstyle #1}\nearrow}
\newcommand{\sea}[1]{\searrow {\scriptstyle #1}}
\newcommand{\csti}[3]{(#1+1) (#2)^{1/ (#1+1)} (#1)^{- #1
 / (#1+1)} (#3)^{ #1 / (#1 +1)}}
\newcommand{\RR}[1]{\mathbb{#1}}

\newcommand{\rd}{{\mathbb R^d}}
\newcommand{\ep}{\varepsilon}
\newcommand{\rr}{{\mathbb R}}
\newcommand{\alert}[1]{\fbox{#1}}
\newcommand{\eqd}{\sim}
\def\p{\partial}
\def\R{{\mathbb R}}
\def\N{{\mathbb N}}
\def\Q{{\mathbb Q}}
\def\C{{\mathbb C}}
\def\l{{\langle}}
\def\r{\rangle}
\def\t{\tau}
\def\k{\kappa}
\def\a{\alpha}
\def\la{\lambda}
\def\De{\Delta}
\def\de{\delta}
\def\ga{\gamma}
\def\Ga{\Gamma}
\def\ep{\varepsilon}
\def\eps{\varepsilon}
\def\si{\sigma}
\def\Re {{\rm Re}\,}
\def\Im {{\rm Im}\,}
\def\E{{\mathbb E}}
\def\P{{\mathbb P}}
\def\Z{{\mathbb Z}}
\def\D{{\mathbb D}}
\newcommand{\ceil}[1]{\lceil{#1}\rceil}

\title{Parabolic-elliptic chemotaxis model with space-time dependent logistic sources on $\mathbb{R}^N$. III. Transition fronts}

\author{
Rachidi B. Salako\\
Department of Mathematics\\
The Ohio State University\\
Columbus OH, 43210-1174\\
 and \\
 Wenxian Shen\thanks{Partially supported by the NSF grant DMS--1645673} \\
Department of Mathematics and Statistics\\
Auburn University\\
Auburn University, AL 36849 }

\date{}
\maketitle

\begin{abstract}
The current work is the third of a series of three papers  devoted to the study of asymptotic dynamics in the following parabolic-elliptic chemotaxis system with space and time dependent logistic source,
\begin{equation}\label{main-eq-abstract}
\begin{cases}
\partial_tu=\Delta u -\chi\nabla\cdot(u\nabla v)+u(a(x,t)-b(x,t)u),\quad x\in\R^N,\cr
0=\Delta v-\lambda v+\mu u ,\quad x\in\R^N,
\end{cases}
\end{equation}
where $N\ge 1$ is a positive integer,  $\chi, \lambda$ and $\mu$ are positive constants,
 and the functions $a(x,t)$ and $b(x,t)$ are positive and bounded.  In the first of the series \cite{SaSh_6_I}, we studied the phenomena of pointwise and uniform persistence for solutions with strictly positive initials, and the asymptotic spreading for solutions with compactly supported or front like initials. In the second of the series \cite{SaSh_6_II}, we investigate the existence, uniqueness and stability of strictly positive entire solutions of \eqref{main-eq-abstract}. In particular, in the case of space homogeneous logistic source (i.e.
 $a(x,t)\equiv a(t)$ and $b(x,t)\equiv b(t)$), we proved in \cite{SaSh_6_II} that  the unique spatially homogeneous strictly positive entire solution $(u^*(t),v^*(t))$ of \eqref{main-eq-abstract}  is uniformly and exponentially stable with respect to strictly positive perturbations when $0<2\chi\mu<\inf_{t\in\R}b(t)$.

 In the current part of the series, we discuss the existence of transition front solutions of \eqref{main-eq-abstract} connecting $(0,0)$ and $(u^*(t),v^*(t))$ in the case of space homogeneous logistic source. We show that for every $\chi>0$ with $\chi\mu\big(1+\frac{\sup_{t\in\R}a(t)}{\inf_{t\in\R}a(t)}\big)<\inf_{t\in\R}b(t)$, there is a positive constant ${c}^{*}_\chi$ such that for every $\underbar{c}> {c}^*_{\chi}$ and every unit vector $\xi$, \eqref{main-eq-abstract} has a  transition front solution  of the form $(u(x,t),v(x,t))=(U(x\cdot\xi-C(t),t),V(x\cdot\xi-C(t),t))$ satisfying that $C'(t)=\frac{a(t)+\kappa^2}{\kappa}$ for some positive number
 $\kappa$,  $\liminf_{t-s\to\infty}\frac{C(t)-C(s)}{t-s}=\underline{c}$, and
 $$
 \lim_{x\to-\infty}\sup_{t\in\R}|U(x,t)-u^*(t)|=0 \quad \text{and}\quad \lim_{x\to\infty}\sup_{t\in\R}|\frac{U(x,t)}{e^{-\kappa x}}-1|=0.
 $$
 Furthermore, we prove that there is  no transition front solution $(u(x,t),v(x,t))=(U(x\cdot\xi-C(t),t),V(x\cdot\xi-C(t),t))$ of \eqref{main-eq-abstract}  connecting $(0,0)$ and $(u^*(t),v^*(t))$ with least mean speed less than $2\sqrt{\underbar{a}}$, where
 $\underline{a}=\liminf_{t-s\to\infty}\frac{1}{t-s}\int_{s}^{t}a(\tau)d\tau$.
\end{abstract}

\medskip
\noindent{\bf Key words.} Parabolic-elliptic chemotaxis system, logistic source, classical solution, local existence, global existence, asymptotic stability, transition front.

\medskip
\noindent {\bf 2010 Mathematics Subject Classification.} 35B35, 35B40, 35K57, 35Q92, 92C17.

\section{Introduction and the Statements of the Main Results}

The current work is the third part of a series of three papers on the asymptotic dynamics in the following parabolic-elliptic chemotaxis system with space-time dependent logistic source,
\begin{equation}\label{P}
\begin{cases}
u_t=\Delta u -\chi\nabla\cdot(u\nabla v)+u(a(x,t)-b(x,t)u),\quad x\in\R^N,\cr
0=\Delta v-\lambda v+\mu u ,\quad x\in\R^N,
\end{cases}
\end{equation}
where $u(x,t)$ and $v(x,t)$ denote mobile species density and chemical density functions, respectively,  $\chi$ is a positive constant which measures the sensitivity with respect to chemical signals, $a(x,t)$ and $b(x,t)$ are positive functions  and measure the  growth and self limitation of the mobile species, respectively. The constant $\mu$ is positive and the term $+\mu u$ in the second equation of \eqref{P} indicates that the mobile species produces the chemical substance over time. The positive constant $\lambda$ measures the degradation rate of the chemical substance.  System \eqref{P}  is a type of the celebrated parabolic-elliptic Keller-Segel   chemotaxis systems (see \cite{KeSe1, KeSe2})  with space-time dependent logistic source.

The objective of the series of the three papers  is to study the asymptotic dynamics in the chemotaxis system \eqref{P} on the whole space with space and/or time dependent logistic source.
In the first of the series, \cite{SaSh_6_I}, we studied the phenomena of pointwise and uniform persistence for solutions with strictly positive initials, and the asymptotic spreading in  \eqref{P} for solutions with compactly supported or front like initials. In the second part of the series, \cite{SaSh_6_II}, we investigated the existence, uniqueness and stability of strictly positive entire solutions of \eqref{P}. In particular it was shown in \cite{SaSh_6_II} that, if the logistic source is space homogeneous, in which case \eqref{P} becomes,
\begin{equation}\label{P00}
\begin{cases}
u_t=\Delta u -\chi\nabla\cdot(u\nabla v)+u(a(t)-b(t)u),\quad x\in\R^N,\cr
0=\Delta v-\lambda v+\mu u ,\quad x\in\R^N,
\end{cases}
\end{equation}
then for every $\chi>0$ with  $0<2\chi\mu<\inf_{t\in\R}b(t)$ there is positive number $\alpha_{\chi}>0$, such that for every positive initial function $u_0\in C^b_{\rm unif}(\R^N)=\{u\in C(\R^N)\,|\, u(x)$ is bounded and uniformly continuous on $\R^M\}$ with $\inf_{x\in\R^N}u_0(x)>0$,  there is  $M>0$ such that
$$
\|u(\cdot,t+t_0,t_0,u_0)-u^*(t)\|_\infty\leq Me^{-\alpha_\chi t}\,\, {\rm and}\,\, \|v(\cdot,t+t_0;t_0,u_0)-v^*(t)\|_{\infty}\leq \frac{\mu}{\lambda}Me^{-\alpha_\chi t}
$$
for all $t\ge 0$ and $t_0\in\R$,
where $(u(x,t;t_0,u_0,v_0),v(x,t;t_0,u_0))$ denote the unique classical solution of \eqref{P00} with
$$
\lim_{t\to 0^+}\|u(\cdot,t+t_0;t_0,u_0)-u_0(\cdot)\|_{\infty}=0,
$$
and $u^*(t)$ is the unique strictly positive entire solution of the Fisher-KKP equation
\begin{equation}\label{kpp-eq}
u_t=\Delta u +u(a(t)-b(t)u),
\end{equation}
and $v^*(t)=\frac{\mu}{\lambda}u^*(t)$. Hence, when $0<2\chi\mu<\inf_{t\in\R}b(t)$,   the unique spatially homogeneous strictly positive entire solution  $(u^*(t),v^*(t))$ of \eqref{P0} is uniformly and exponentially stable with respect to strictly positive perturbations.

Observe that with $N=1$, $a(t)\equiv 1$ and $b(t)\equiv 1$, \eqref{kpp-eq} becomes
\begin{equation}
\label{fisher-kpp}
u_t=u_{xx}+u(1-u),\quad x\in\R.
\end{equation}
Equation \eqref{fisher-kpp}  is called in literature  Fisher-KPP   equation
 due to the pioneering works of Fisher
\cite{Fisher} and Kolmogorov, Petrowsky, Piskunov \cite{KPP} on traveling wave solutions and take-over properties of \eqref{fisher-kpp}.
Fisher in
\cite{Fisher} found traveling wave solutions $u(t,x)=\phi(x-ct)$ of \eqref{fisher-kpp}
$(\phi(-\infty)=1,\phi(\infty)=0)$ of all speeds $c\geq 2$ and
showed that there are no such traveling wave solutions of slower
speed. He conjectured that the take-over occurs at the asymptotic
speed $2$. This conjecture was proved in \cite{KPP}  {for some special initial distribution and was proved in \cite{ArWe2} for the general case.
 More precisely, it is proved
in \cite{KPP} that for the  nonnegative solution $u(t,x)$ of \eqref{fisher-kpp} with
$u(0,x)=1$ for $x<0$ and $u(0,x)=0$ for $x>0$, $\lim_{t\to \infty}u(t,ct)$ is $0$ if $c>2$ and $1$ if $c<2$. It is proved
in \cite{ArWe2} that for any
nonnegative solution $u(t,x)$ of (\ref{fisher-kpp}), if at
time $t=0$, $u$ is $1$ near $-\infty$ and $0$ near $\infty$, then
$\lim_{t\to \infty}u(t,ct)$ is $0$ if $c>2$ and $1$ if $c<2$.}
In
literature, $c^*=2$ is   called the {\it
spreading speed} for \eqref{fisher-kpp}.

A huge amount of research has been carried out toward various extensions of
 traveling wave solutions and take-over properties  of \eqref{fisher-kpp} to general time and space independent
as well as time and/or space dependent Fisher-KPP type  equations.
See, for example,  \cite{ArWe1}, \cite{ArWe2}, \cite{Bra}, \cite{Ham},  \cite{Kam}, \cite{Sat}, \cite{Uch}, etc., for
the extension to general time and space independent Fisher-KPP type equations;  see
 \cite{BeHaNa1,  BeHaRo,  FrGa,   HuZi1,
LiYiZh, LiZh, LiZh1,  Nad,  NoRuXi, NoXi, SaSh_7_II, Wei1, Wei2},  and references therein for
the extension to time and/or space periodic Fisher-KPP
type equations; and see
\cite{BeHa07, BeHa12,  HePaSt, HuSh,  Mat, Nad1,  NaRo1, NaRo2, NaRo3, NoRoRyZl,  She6, She7, She8, TaZhZl, Xin1, Zla}, and references therein for
the extension to quite general time and/or space dependent Fisher-KPP
type equations. It should be pointed out that the so called periodic traveling wave solutions or pulsating traveling fronts to time and/or space periodic
reaction diffusion equations are
 natural extension of the notion of traveling wave solutions in the classical sense,  and that  the so called transition fronts or generalized traveling waves
 to general time and/or space dependent reaction equations are the natural extension of the notion of traveling wave solutions in the classical sense
 (see  \cite{BeHa07, BeHa12} for the introduction of the notion of transition fronts or generalized traveling waves  in the general case, and
  \cite{Mat, She4, She7, She8} for the time almost periodic or space almost periodic cases).

   Considering a chemotaxis model  on the whole space, it is important to study the spatial spreading and propagating properties of the mobile species in the model. Transition front solutions or generalized traveling wave solutions  and spatial spread speeds are among those used to characterize such properties. There are many studies on traveling wave solutions of various types of chemotaxis models, see, for
example, \cite{AiHuWa, AiWa, FuMiTs, HoSt, LiLiWa, MaNoSh, NaPeRy, SaSh2, SaSh3, Wan}, etc.. It should be mentioned that
 spreading speeds and traveling wave solutions of \eqref{P} with $a(x,t)$ and $b(x,t)$ being constant functions are studied in \cite{HaHe, NaPeRy, SaSh2, SaSh3}. When $a(x,t)$ and $b(x, t)$ depend on $x$ and $t$,  as it is mentioned in the above there are many studies on spreading speeds and transition front solutions of \eqref{P} with $\chi=0$, but there is little study on transition front solutions  of \eqref{P} with $\chi\not =0$.

 The objective of this third part of the  series is to study the existence of transition front solutions of \eqref{P00}, i.e., \eqref{P} in the case that $a(x,t)\equiv a(t)$ and $b(x,t)\equiv b(t)$, connecting $(0,0)$ and $(u^*(t),v^*(t))$.
 To be more precise,  we study the existence of positive entire solutions of \eqref{P00} with  the form
 $(u(x,t),v(x,t))=(U(x\cdot\xi-C(t),t),V(x\cdot\xi-C(t),t))$ for some $\xi\in S^{N-1}=\{\xi\in\R^N\,|\, \|\xi\|=1\}$ and some $C(t)$, where
 $(U(-\infty,t),V(-\infty,t))=(u^*(t),v^*(t))$ and $(U(\infty,t),V(\infty,t))=(0,0)$. It is not difficult to see that, if $(u(x,t),v(x,t))=(U(x\cdot\xi-C(t),t),V(x\cdot\xi-C(t),t))$ $(x\in\R^N$) is an entire solution of \eqref{P00}, then
 $(u(x,t),v(x,t))=(U(x-C(t),t),V(x-C(t),t))$ $(x\in\R)$ is an entire solution of
 \begin{equation}\label{P0}
\begin{cases}
u_t=u_{xx} -\chi(u v_x)_x+u(a(t)-b(t)u),\quad x\in\R,\cr
0=v_{xx}-\lambda v+\mu u ,\quad x\in\R.
\end{cases}
\end{equation}
 We will then study the existence of transition front solutions of \eqref{P0} connecting $(0,0)$ and $(u^*(t),v^*(t))$.

In the rest of the introduction, we introduce notations and standing assumptions,  and state the main results of the current paper.

\subsection{Notations and standing assumptions}

For every function $w : \R\times I\to \R$, where $I\subset \R$, we set $w_{\inf}(t)=\inf_{x\in\R}w(x,t)$, $w_{\sup}(t)=\sup_{x\in\R}w(x,t)$, $w_{\inf}=\inf_{(x,t)\in \R\times I}w(x,t)$, and $w_{\sup}=\sup_{(x,t)\in \R\times I}w(x,t)$.
Let
$$
C_{\rm unif}^b(\R)=\{u\in C(\R)\,|\, u(x)\,\,\, \text{is uniformly continuous in}\,\,\, x\in\R\,\,\, \text{and}\,\, \sup_{x\in\R}|u(x)|<\infty\}
$$
equipped with the norm $\|u\|_\infty=\sup_{x\in\R}|u(x)|$. For any $0\le \nu<1$, let
$$
C_{\rm unif}^{b,\nu}(\R)=\{u\in C_{\rm unif}^b(\R)\,|\, \sup_{x,y\in\R,x\not = y}\frac{|u(x)-u(y)|}{|x-y|^\nu}<\infty\}
$$
with norm $\|u\|_{C_{\rm unif}^{b,\nu}}=\sup_{x\in\R}|u(x)|+\sup_{x,y\in\R,x\not =y}\frac{|u(x)-u(y)|}{|x-y|^\nu}$. Hence $C_{\rm unif}^{b,0}(\R)=C_{\rm unif}^{b}(\R)$.

For given $f\in L_{\rm loc}^1(\R)$, let
$$
\underline{f}=\liminf_{t-s\to\infty}\frac{1}{t-s}\int_{s}^{t}f(\tau)d\tau
\quad {\rm and}\quad
\overline{f}=\limsup_{t-s\to\infty}\frac{1}{t-s}\int_{s}^{t}f(\tau)d\tau.
$$
$\underline{f}$ and $\overline{f}$  are called the
 {\it least mean} and {\it greatest mean} of $f$,
 respectively.

Throughout the remaining of this paper, we shall always suppose that the following standing assumption holds.

\medskip

\noindent {\bf (H)} {\it $a(x,t)\equiv a(t)$ and  $b(x,t)\equiv b(t)$ are uniformly H\"older continuous in $t\in\R$ with exponent $0<\nu_0<1$
and
$$
 0<\inf_{t\in\R}\min\{a(t), b(t)\} \leq \sup_{t\in\R}\max\{a(t),b(t)\}<\infty.
$$
}

Observe from ${\bf (H)} $ that
\begin{equation}
0<a_{\inf}\leq \underline{a}\le \overline{a}\leq a_{\sup}<\infty.
\end{equation}

\subsection{Main results}

 For given $u_0\in C_{\rm unif}^b(\R)$ and $t_0\in\R$, let $(u(x,t;t_0,u_0),v(x,t;t_0,u_0))$ be the classical solution of \eqref{P0} with $u(x,t_0;t_0,u_0)=u_0(x)$ for every $x\in\R$ (see \cite[Theorem 1.1]{SaSh1} for the existence of $(u(x,t;t_0,u_0),v(x,t;t_0,u_0))$). Note that if $u_0(x)\geq 0$ then $u(x,t;t_0,u_0)\geq 0$ and $v(x,t;t_0,u_0)\ge 0$ for every $x\in\R$ and $t\in[t_0, t_0+T_{\max})$, where $[t_0, t_0+T_{\max})$ denotes the maximal interval of existence of $(u(x,t;t_0,u_0),v(x,t;t_0,u_0))$. A classical solution $(u(x,t),v(x,t))$ of \eqref{P0} is said to be an {\it entire solution} of \eqref{P0} if it is defined for every $t\in\R$. Note that $(u(x,t),v(x,t))=(0,0)$ is an equilibrium solution of \eqref{P0}. Throughout this work we shall denote by $u^*(t)$ the unique strictly positive entire solution  of \eqref{kpp-eq} and $v^*(t)=\frac{\mu}{\lambda}u^*(t)$. Then $(u^*(t),v^*(t))$ is  a positive entire solution of \eqref{P0}.

 An entire solution $(u(x,t),v(x,t))$ of \eqref{P0} is called a {\it transition front solution} connecting $(0,0)$ and $(u^*(t),v^*(t))$ if
 \begin{equation}
 (u(x,t),v(x,t))=(U(x-C(t),t),V(x-C(t),t))
 \end{equation}
 for some $U(\cdot,\cdot)$ and $C(t)$ satisfying
 \begin{equation}
\lim_{z\to-\infty}|U(z,t)-u^*(t)|=\lim_{z\to-\infty}|V(z,t)-v^*(t)|=0, \quad \text{uniform in } t\in\R,
\end{equation}
and
\begin{equation}
\lim_{z\to\infty}U(z,t)=\lim_{z\to\infty}V(z,t)=0, \quad \text{uniform in } t\in\R.
\end{equation}
Let
$$
\underbar{c}=\liminf_{t-s\to\infty}\frac{C(t)-C(s)}{t-s}.
$$
 The function $(U(\cdot,\cdot),V(\cdot,\cdot))$ and $\underline{c}$  are called the {\it profile} and {\it least mean speed}, respectively, of the {transition front solution $(u(x,t),v(x,t))=(U(x-C(t),t),V(x-C(t),t))$}. If we suppose that $C(t)$ is of class $C^1$ and set $c(t)=C'(t)$, then
 $(U(\cdot,\cdot),V(\cdot,\cdot))$ and $c(\cdot)$ satisfy
\begin{equation}
\label{P0-1}
\begin{cases}
U_t=U_{xx} +c(t)U_x-\chi(U V_x)_{x}+U(a(t)-b(t)U),\quad x\in\R, \ t\in\R, \\
0=V_{xx} -\lambda V +\mu U,\quad x\in\R, \ t\in\R.
\end{cases}
\end{equation}

 Note that the function $\eta(\kappa):=\frac{\kappa(\sqrt{\lambda-\kappa^2}+\kappa)}{\lambda-\kappa^2}$
is strictly increasing on $(0, \sqrt{\lambda})$. For given $\chi> 0$ with $b_{\inf}>\chi\mu$, let
$\kappa_\chi\in (0, \sqrt{\lambda})$ be such that
\begin{equation}
\label{min-kappa-eq}
\frac{b_{\inf}-\chi\mu}{\chi\mu}= \frac{\kappa_\chi(\sqrt{\lambda-\kappa_\chi^2}+\kappa_\chi)}{\lambda-\kappa_\chi^2}.
\end{equation}
Then for any $0<\kappa\le \kappa_{\chi}$,
\begin{equation}
\label{min-kappa-eq1}
\frac{b_{\inf}-\chi\mu}{\chi\mu}\ge  \frac{\kappa(\sqrt{\lambda-\kappa^2}+\kappa)}{\lambda-\kappa^2}.
\end{equation}
Define
\begin{equation}
\label{minimal-speed-eq1}
{c}_\chi^*=\frac{\underline{a}+{\kappa}^2_{\chi^*}}{{\kappa}_{\chi^*}},
\end{equation}
 where ${\kappa}_\chi^{*}=\min\{\kappa_\chi, \sqrt{\underline{a}}\}$.
Let {\bf (H1)} be the following standing assumption.

\medskip

\noindent {\bf (H1)}  {\it $b_{\inf}>\chi\mu(1+\frac{a_{\sup}}{a_{\inf}})$.}

\medskip

The main results on the existence of transition front solutions of \eqref{P0} are stated in the following theorem.

\begin{tm}\label{Main-tm1}
Suppose that {\bf (H1)} holds.
\begin{itemize}
\item[(1)] For every $\underline{c}> {c}^*_\chi$, \eqref{P0} has a transition front solution $(u(x,t),v(x,t))=(U(x-C(t),t),V(x-C(t),t))$  connecting $(0,0)$ and $(u^*(t),v^*(t))$ with least mean speed $\underline{c}$. Furthermore, it holds that $C(t)=\int_0^{t}\frac{a(s)+\kappa^2}{\kappa}ds$  and
\begin{equation}\label{TW-eq}
\lim_{x\to-\infty}\sup_{t\in\R}|U(x,t)-u^*(t)|=0, \quad \text{and}\quad \lim_{x\to\infty}\sup_{t\in\R}|\frac{U(x,t)}{e^{-\kappa x}}-1|=0,
\end{equation}
where  $\kappa\in(0, \min\{\kappa_\chi,\sqrt{\underbar{a}}\})$ is such that $\underline{c}=\frac{\underline{a}+\kappa^2}{\kappa}$.

\item[(2)] If $a(t)$ and $b(t)$ are periodic in $t$ with period $T$, then for every $c> {c}^*_\chi$, \eqref{P0} has a
periodic transition front solution $(u(x,t),v(x,t))=(U(x-ct,t),V(x-ct,t))$  connecting $(0,0)$ and $(u^*(t),v^*(t))$, and satisfying
\begin{equation}
\label{TW-eq1}
U(x,t+T)=U(x,t),\quad V(x,t+T)=V(x,t),
\end{equation}
and
\begin{equation}\label{TW-eq2}
\lim_{x\to-\infty}\sup_{t\in\R}|U(x,t)-u^*(t)|=0, \quad \text{and}\quad \lim_{x\to\infty}\sup_{t\in\R}|\frac{U(x,t)}{e^{-\kappa (x+ct-\int_0^t c_\kappa(s)ds)}}-1|=0,
\end{equation}
where  $\kappa\in(0, \min\{\kappa_\chi,\sqrt{\hat{a}}\})$ $(\hat a=\frac{1}{T}\int_0^T a(t)dt$)  satisfies $c=\frac{\hat{a}+\kappa^2}{\kappa}$, and $c_\kappa(s)=\frac{a(s)+\kappa^2}{\kappa}$.

\item[(3)]  If $a(t)\equiv a$ and $b(t)\equiv b$ are independent of $t$, then for every $c> {c}^*_\chi$, \eqref{P0} has a
traveling wave solution $(u(x,t),v(x,t))=(U(x-ct),V(x-ct))$  connecting $(0,0)$ and $(u^*,v^*)=(\frac{a}{b}, \frac{\mu}{\lambda}\frac{a}{b})$, and satisfying
\begin{equation}\label{TW-eq3}
\lim_{x\to-\infty}|U(x)-u^*|=0, \quad \text{and}\quad \lim_{x\to\infty}|\frac{U(x)}{e^{-\kappa x}}-1|=0,
\end{equation}
where  $\kappa\in(0, \min\{\kappa_\chi,\sqrt{{a}}\})$ satisfies $c=\frac{{a}+\kappa^2}{\kappa}$.
\end{itemize}
\end{tm}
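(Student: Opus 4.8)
The plan is to decouple the system, reduce the construction of the front to a single nonlocal scalar equation for the profile $U$, and then build the front by trapping it between an explicit super-solution and sub-solution before passing to an entire solution by a limiting argument. Since the second equation in \eqref{P0-1} is elliptic and $U$ is bounded, $V$ is uniquely recovered from $U$ through the convolution
\begin{equation*}
V = V[U] := \frac{\mu}{2\sqrt{\lambda}}\int_{\R}e^{-\sqrt{\lambda}|x-y|}U(y,t)\,dy ,
\end{equation*}
which immediately yields the a priori bounds $\|V[U](\cdot,t)\|_\infty\le \frac{\mu}{\lambda}\|U(\cdot,t)\|_\infty$ and $\|\partial_x V[U](\cdot,t)\|_\infty\le \frac{\mu}{\sqrt{\lambda}}\|U(\cdot,t)\|_\infty$. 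Using $V_{xx}=\lambda V-\mu U$ to expand $(UV_x)_x=U_xV_x+UV_{xx}$, the first equation in \eqref{P0-1} becomes the scalar nonlocal equation
\begin{equation*}
U_t=U_{xx}+\big(c(t)-\chi\,\partial_x V[U]\big)U_x+U\big(a(t)-\chi\lambda V[U]\big)-\big(b(t)-\chi\mu\big)U^2 ,
\end{equation*}
in which the cross-diffusion has been turned into a drift $-\chi\,\partial_xV[U]\,U_x$, a potential $-\chi\lambda V[U]$, and a modified self-limitation rate $b(t)-\chi\mu>0$ (positive by \textbf{(H1)}).

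I would then fix the moving frame. Given $\underline{c}>c^*_\chi$, the map $\kappa\mapsto \frac{\underline{a}+\kappa^2}{\kappa}$ is strictly decreasing on $(0,\sqrt{\underline{a}})$ with infimum $2\sqrt{\underline{a}}$ attained as $\kappa\uparrow\sqrt{\underline{a}}$, and comparison with the definition \eqref{minimal-speed-eq1} of $c^*_\chi$ shows that there is a unique $\kappa\in(0,\min\{\kappa_\chi,\sqrt{\underline{a}}\})$ with $\underline{c}=\frac{\underline{a}+\kappa^2}{\kappa}$. Setting $c(t)=\frac{a(t)+\kappa^2}{\kappa}$ and $C(t)=\int_0^t c(s)\,ds$, the identity $\liminf_{t-s\to\infty}\frac{C(t)-C(s)}{t-s}=\underline{c}$ is then exactly the definition of the least mean of $a$. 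This precise speed is forced by the linearization at the leading edge: substituting $U=e^{-\kappa x}$ into the linear part $U_t=U_{xx}+c(t)U_x+a(t)U$ gives the exponent $\kappa^2-c(t)\kappa+a(t)=0$, so $e^{-\kappa x}$ solves the linearized equation exactly, which is the origin of the normalization $U(x,t)/e^{-\kappa x}\to1$ as $x\to+\infty$ in \eqref{TW-eq}.

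At the leading edge I would take a super-solution built from $e^{-\kappa x}$ (suitably capped near the plateau) and a sub-solution of the form $\max\{0,\,e^{-\kappa x}(1-d\,e^{-\beta x})\}$ with $\beta>0$ small and $d>0$ large; the KPP computation shows the latter is a sub-solution once the chemotactic corrections, which are $O(e^{-2\kappa x})$ near the edge, are absorbed by the $-d\,e^{-(\kappa+\beta)x}$ gap, and this is precisely where the quantitative content of \eqref{min-kappa-eq1} together with $\kappa<\kappa_\chi$ (hence \textbf{(H1)}) is spent to dominate the drift and potential terms. For each $n\in\N$ I would solve the scalar equation on $[-n,\infty)$ with initial datum pinched between these barriers; interior parabolic estimates give uniform local $C^{2+\nu,1+\nu/2}$ bounds, and a diagonal extraction produces, as $n\to\infty$, an entire solution $U$ with $\phi^-\le U\le\phi^+$. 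Reconstructing $V=V[U]$ gives the transition front and the sandwich forces the second limit in \eqref{TW-eq}; the convergence $U(\cdot,t)\to u^*(t)$ as $x\to-\infty$, uniformly in $t$, I would deduce by combining the lower barrier (which keeps the profile bounded away from $0$ on the left) with the uniform exponential stability of $(u^*,v^*)$ from \cite{SaSh_6_II} and the uniform persistence of \cite{SaSh_6_I}, pinning the plateau to $u^*(t)$ uniformly in $t$.

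The principal obstacle is that, because $V$ depends nonlocally on $U$, the scalar equation is not a genuine scalar parabolic equation and the classical comparison principle does not apply verbatim; the heart of the argument is to show the solution stays trapped between $\phi^-$ and $\phi^+$ and, in particular, to produce a super-solution valid near the plateau, where the term $-\chi\lambda V[U]$ destroys the naive barrier $\min\{u^*(t),e^{-\kappa x}\}$ and the extra margin in \textbf{(H1)} beyond $b_{\inf}>\chi\mu$ must be used. I would resolve this by establishing a comparison principle tailored to ordered sub/super-solutions of this nonlocal equation, exploiting the uniform bounds on $V[U]$ and $\partial_xV[U]$ above, in the spirit of the comparison techniques of \cite{SaSh_6_I,SaSh_6_II}. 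Part~(2) then follows by specializing to $T$-periodic $a,b$ and extracting the front along the sequence $t_n=-nT$, so that the periodicity \eqref{TW-eq1} is inherited in the limit, while part~(3) is the autonomous case, where $c(t)\equiv c$, $C(t)=ct$, and the profile is $t$-independent, yielding the classical traveling wave \eqref{TW-eq3}.
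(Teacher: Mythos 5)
Your setup — decoupling $V=V[U]$ via the Green's function of $\lambda-\partial_{xx}$, fixing $c(t)=\frac{a(t)+\kappa^2}{\kappa}$ from the leading-edge linearization, and sandwiching the profile between $e^{-\kappa x}$-type barriers — matches the paper. But there is a genuine gap at what you yourself call the heart of the argument: the trapping of the solution between $\phi^-$ and $\phi^+$. You propose to resolve the failure of the classical comparison principle by ``establishing a comparison principle tailored to ordered sub/super-solutions of this nonlocal equation.'' No such comparison principle holds for this equation. If $u_1\le u_2$, then $V[u_1]\le V[u_2]$, so at a touching point the difference $w=u_2-u_1$ picks up the term $-\chi\lambda\big(V[u_2]-V[u_1]\big)u_1\le 0$, which has exactly the wrong sign for a maximum-principle argument; the drift contribution $-\chi\big(\partial_xV[u_2]-\partial_xV[u_1]\big)\partial_x u_1$ is sign-indefinite as well. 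This is the standard obstruction for chemotaxis systems, and it cannot be absorbed by the uniform bounds on $V[U]$ and $\partial_xV[U]$ alone: those bounds control magnitudes, not signs. So the central step of your construction, as stated, does not go through.

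The paper circumvents this by never applying comparison to the nonlocal equation. It freezes the nonlocal terms at a candidate profile $\phi\in\mathcal{E}_{\kappa,\beta_0,M}$, obtaining the genuinely local parabolic operator $\mathcal{L}_{\kappa,\phi}$ in \eqref{L-operator}, for which classical comparison is valid; the barriers of Theorem \ref{strong-sup-solution} are sub/super-solutions of this \emph{frozen} equation for every such $\phi$. Solving the frozen equation from $t_0$ with datum $\phi_\kappa^+$, using monotonicity in $t_0$ and letting $t_0\to-\infty$ yields an entire solution $\Phi(\cdot,\cdot;\phi)$ trapped between $\phi_\kappa^-$ and $\phi_\kappa^+$, and the front is then obtained as a Schauder fixed point of $\phi\mapsto\Phi(\cdot,\cdot;\phi)$ on the compact convex set $\tilde{\mathcal{E}}_{\kappa,\beta_0,M}$ in the compact-open topology. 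Note that the continuity of this map is itself nontrivial: it rests on the uniqueness of the entire solution of the frozen equation in $\tilde{\mathcal{E}}_{\kappa,\beta_0,M}$ (Lemma \ref{uniqueness-lem}), proved by a separate sliding/scaling argument. Your proposal contains no substitute for this fixed-point layer, and your route to the limit $U(x,t)\to u^*(t)$ as $x\to-\infty$ (via stability and persistence from the companion papers) is plausible but would still need the entire solution to exist first; the paper instead derives it from the uniqueness of strictly positive entire solutions of \eqref{P0} by a compactness argument. To repair your proof you would need either to adopt the freeze-and-fix-point scheme or to supply a correct mechanism for ordering solutions of the genuinely nonlocal equation, which the sign analysis above rules out in the form you describe.
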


We have the following theorem on the nonexistence of transition front solutions of \eqref{P0}.

\begin{tm}\label{Main-tm2}
Suppose that {\bf (H1)} holds.
For every $\underline{c}< 2\sqrt{\underbar{a}}$, \eqref{P0} has no transition front solution $(u(x,t),v(x,t))=(U(x-C(t),t),V(x-C(t),t))$  connecting $(0,0)$ and $(u^*(t),v^*(t))$ with least mean speed $\underline{c}$.
\end{tm}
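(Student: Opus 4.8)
The plan is to argue by contradiction: suppose a transition front solution $(u(x,t),v(x,t))=(U(x-C(t),t),V(x-C(t),t))$ exists with least mean speed $\underline c<2\sqrt{\underbar a}$, and derive a contradiction by showing that the $u$-component must actually spread at least as fast as the Fisher--KPP spreading speed $2\sqrt{\underbar a}$. The intuition is that near the leading edge the density $u$ is small, so the nonlinear self-limitation $-b(t)u^2$ and the chemotaxis term $-\chi(uv_x)_x$ are both negligible of higher order, and the equation linearizes to an advection-diffusion equation with growth rate $a(t)$, whose propagation speed is governed by the least mean $\underbar a$. I would therefore first reduce the problem to a lower bound on the interface location $C(t)$.

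First I would set up a comparison from below. Since $(u,v)$ is a transition front connecting $(0,0)$ and $(u^*(t),v^*(t))$, the profile satisfies $U(-\infty,t)=u^*(t)>0$ uniformly in $t$; in particular there is $\delta_0>0$ and $R>0$ with $u(x,t)\ge \delta_0$ whenever $x-C(t)\le -R$. The key step is to control the chemotaxis contribution so that $u$ is a supersolution of a genuine Fisher--KPP-type equation with reaction $\ge u(a(t)-\tilde b\,u)$ for a suitable constant $\tilde b$. Using the second equation $0=v_{xx}-\lambda v+\mu u$, I would obtain $v=(\lambda-\partial_{xx})^{-1}\mu u$ and hence pointwise bounds on $v$, $v_x$ and $v_{xx}$ in terms of $u$ (this is standard for the elliptic problem on $\R$; the relevant estimates, e.g. $\|v\|_\infty\le \tfrac{\mu}{\lambda}\|u\|_\infty$ and bounds on $v_x$, are the ones already used in constructing the fronts under \textbf{(H1)}). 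Expanding $-\chi(uv_x)_x=-\chi u_xv_x-\chi u\,v_{xx}=-\chi u_x v_x-\chi u(\mu u-\lambda v)$, the term $-\chi u(\mu u-\lambda v)$ is of order $u^2$ and can be absorbed into the logistic nonlinearity, while under \textbf{(H1)} the coefficient of $u^2$ stays controlled. The outcome should be that $u$ satisfies a scalar parabolic inequality $u_t\ge u_{xx}+u(a(t)-\hat b\,u)$ for some constant $\hat b<\infty$, at least on the region where $u$ is small.

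Next I would invoke the linear spreading / persistence machinery for the time-dependent KPP equation. The cleanest route is to compare $u$ from below with solutions of $w_t=w_{xx}+a(t)w$ on a moving frame, or equivalently to test against exponentials $e^{-\kappa(x-ct)}$ and use that the principal Lyapunov exponent of the linearization at $0$ equals $\underbar a$ in the least-mean sense. Concretely, for any speed $c<2\sqrt{\underbar a}$ there is $\kappa>0$ with $\kappa^2+\underbar a-\kappa c>0$, so a linear supersolution cannot decay along the moving frame $x=ct$; this forces $\liminf_{t-s\to\infty}\frac{C(t)-C(s)}{t-s}\ge 2\sqrt{\underbar a}$, contradicting $\underline c<2\sqrt{\underbar a}$. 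Equivalently, one shows that if the interface moved with least mean speed $<2\sqrt{\underbar a}$, then behind it $u$ would be forced to grow past $u^*(t)$, violating that $U\le u^*$ (or the uniform bound on the front). The most likely way the argument is organized is: (i) fix $s$, run the solution forward from a sub-front initial datum supported behind $C(s)$, and (ii) apply the already-established spreading result from \cite{SaSh_6_I} for compactly supported or front-like initials, which gives spreading speed exactly $2\sqrt{\underbar a}$ for \eqref{P}; the transition front, lying above such a subsolution, must then advance at least that fast.

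The main obstacle I anticipate is the chemotaxis term: unlike the pure KPP case, $-\chi(uv_x)_x$ is not sign-definite and involves the nonlocal quantity $v_x$, so one cannot simply drop it. The delicate point is to show that at the leading edge, where $u\to0$, this term is genuinely a higher-order perturbation and does not create an effective \emph{advection} that could slow the front below $2\sqrt{\underbar a}$. I expect this is exactly where \textbf{(H1)}, through the bound $b_{\inf}>\chi\mu(1+a_{\sup}/a_{\inf})$, is used: it guarantees enough damping that the chemotactic drift cannot dominate diffusion at the edge, so the linearized spreading rate at $u=0$ remains $2\sqrt{\underbar a}$. Making the ``negligible at the edge'' claim rigorous — likely via an $a\,priori$ gradient estimate on $v$ decaying in the tail, combined with a careful sub/supersolution in a suitable moving exponential frame — is the technical heart of the proof; the KPP spreading comparison itself is then routine given the results of \cite{SaSh_6_I}.
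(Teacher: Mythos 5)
Your high-level strategy matches the paper's: argue that the front must in fact propagate with least mean speed at least $2\sqrt{\underline{a}}$ by showing that the chemotaxis terms are negligible at the leading edge, then invoke a KPP-type lower spreading bound built from the sub-solutions of \cite{NaRo1}. You also correctly identify the crux: the advection $-\chi u_x v_x$ is not sign-definite and cannot simply be dropped. However, your proposal leaves exactly that crux unresolved, and the one concrete reduction you do offer is wrong: after writing $-\chi(uv_x)_x=-\chi u_xv_x-\chi u(\mu u-\lambda v)$ you claim the outcome is a drift-free inequality $u_t\ge u_{xx}+u(a(t)-\hat b\,u)$. The term $-\chi u_xv_x$ is first order in $u_x$, not of order $u^2$, and no amount of absorbing into the logistic nonlinearity removes it; any correct comparison must retain a drift term in the operator. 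This is a genuine gap, not a presentational one.

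The paper's resolution has two ingredients that your sketch does not contain. First, a Harnack-type inequality (from \cite[Lemma 2.2]{HaHe}) yields the pointwise bound $|\chi v_x(t,x)|+\chi\lambda v(t,x)\le C_{R,p}\,(u(t,x))^{1/p}+\varepsilon_R M$, i.e.\ the drift and the $\chi\lambda v$ damping are controlled by a \emph{fractional power of $u$ at the same point} plus a constant that can be made small by taking $R$ large; this is what makes ``the chemotaxis is negligible where $u$ is small'' quantitative. Second, the sub-solution $\underline{u}(t,x)=\sigma h(\gamma t-x)$ is monotone ($\underline{u}_x\le 0$), so one may replace the true drift $-\chi v_x$ by $B_s(t,x)=\min\{-\chi v_x,\varepsilon\}$ in the comparison operator (this only improves the differential inequality for $\underline{u}$), while the modified and original operators coincide precisely on the region where $u\le\sigma$, by the Harnack bound and the choice of $\sigma$. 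Together with the exponential weight $e^{A_{a_0}(t)}$ that converts the least mean $\underline{a}$ into a uniform lower bound $a_0$ on the effective growth rate, this closes the comparison $\underline{u}\le \tilde u^{s,\gamma}$ and hence the lower bound $\gamma\le\liminf_{t-s\to\infty}\frac{C(t)-C(s)}{t-s}$ for all $\gamma<2\sqrt{a_0}$, $a_0<\underline{a}$. Your alternative suggestion of citing the spreading result of \cite{SaSh_6_I} for front-like data could in principle shortcut Steps 1--4, but that result's proof must confront the same drift issue, so the citation does not by itself supply the missing argument; and your remark that ``a linear supersolution cannot decay along the moving frame'' conflates the upper-bound mechanism (exponential supersolutions) with the lower-bound mechanism (compactly supported sub-solutions), which is the one actually needed here.
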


\begin{rk}
\label{TW-rk}
\begin{itemize}
\item[(1)] The results in Theorem \ref{Main-tm1}(1), (2), and Theorem \ref{Main-tm2}   are new. The result in  Theorem \ref{Main-tm1}(3) extends the results in
\cite{SaSh2} for the case $\lambda=\mu=1$ (see \cite[Theorem A and Remark 1.1]{SaSh2}).

 \item[(2)]  By Theorem \ref{Main-tm1}(1), for every $\xi\in S^{N-1}=\{\xi\in\R^N\,|\, \|\xi\|=1\}$ and every $\underline{c}> {c}^*_\chi$, \eqref{P} has a transition front solution $(u(x,t),v(x,t))=(U(x\cdot\xi-C(t),t),V(x\cdot\xi-C(t),t))$  connecting $(0,0)$ and $(u^*(t),v^*(t))$ with least mean speed $\underline{c}$, where $C(t)=\int_0^{t}\frac{a(s)+\kappa^2}{\kappa}ds$, $\kappa\in(0, \min\{\kappa_\chi,\sqrt{\underbar{a}}\})$ is such that $\underline{c}=\frac{\underline{a}+\kappa^2}{\kappa}$, and $(U(x,t),V(x,t))$ satisfies \eqref{TW-eq}.

\item[(3)] Let $c_0^*=2\sqrt {\underbar{a}}$. It is proved in  \cite[Theorem 2.3]{NaRo1} that $c_0^*$ is the minimal least mean speed of transition front solutions of \eqref{kpp-eq}, i.e.,  \eqref{P0} in the absence of the chemotaxis,  in the sense that for any $\underbar{c}>c_0^*$, \eqref{kpp-eq} has a transition front solution connecting $0$ and $u^*(t)$ with least mean speed $\underbar{c}$, and \eqref{kpp-eq} has no
    transition front solutions connecting $0$ and $u^*(t)$  with least mean speed smaller than $c_0^*$.

 \item[(4)] For fixed $\chi>0$ with $b_{\inf}>\chi\mu$,
  when the degradation rate $\lambda$ of the chemical substance is sufficiently large, we have $\kappa_\chi\ge \sqrt{\underbar{a}}$ and $\kappa_\chi^*=\sqrt {\underbar{a}}$, hence $c_\chi^*=2\sqrt {\underbar{a}}$, which is  the minimal least mean speed of transition front solutions of \eqref{kpp-eq}.
 Indeed, since the function $\lambda\mapsto \frac{\sqrt{\underline{a}}\left(\sqrt{\lambda-\underline{a}} +\sqrt{\underline{a}}\right)}{\lambda-\underbar{a}}$ is strictly decreasing and satisfies
 $$\lim_{\lambda\to \underline{a}+}\frac{\sqrt{\underline{a}}\left(\sqrt{\lambda-\underline{a}} +\sqrt{\underline{a}}\right)}{\lambda-\underline{a}}=\infty\,\, {\rm and}\,\, \lim_{\lambda\to\infty}\frac{\sqrt{\underline{a}}\left(\sqrt{\lambda-\underline{a}} +\sqrt{\underline{a}}\right)}{\lambda-\underbar{a}}=0,
  $$
  there is a unique $\lambda_\chi>\underline{a}$ such that
  $$
  \frac{\sqrt{\underline{a}}\left(\sqrt{\lambda_\chi-\underline{a}} +\sqrt{\underline{a}}\right)}{\lambda_\chi-\underline{a}}= \frac{b_{\inf}-\chi\mu}{\chi\mu}.
  $$
  This implies that for any $\lambda>\lambda_\chi$,
  $$
  \frac{\sqrt{\underline{a}}\left(\sqrt{\lambda-\underline{a}} +\sqrt{\underline{a}}\right)}{\lambda-\underline{a}}< \frac{b_{\inf}-\chi\mu}{\chi\mu},
  $$
  and hence $\sqrt{\underline{a}}<\kappa_\chi<\sqrt\lambda$, where $\kappa_\chi$ is such that \eqref{min-kappa-eq} holds.
  It then follows that, when $\lambda\ge \lambda_\chi(>\underbar{a})$, $c_\chi^*=c_0^*=2\sqrt{\underline{a}}$.
  Hence,  Theorem \ref{Main-tm1}(1) and Theorem \ref{Main-tm2}  imply that the minimal least mean speed $c_0^*=2\sqrt {\underbar{a}}$ of transition front solutions to the time heterogeneous Fisher-KPP equation
  \eqref{kpp-eq} is also the minimal least mean speed of the transition front solutions to  the chemotaxis model \eqref{P0} with  $\lambda>\lambda_\chi(>\underbar{a})$.

  \item[(5)] For fixed $\lambda>\underbar{a}$, when $\chi>0$ is sufficiently small, we also have $c_\chi^*=2\sqrt{\underbar{a}}$.
  Indeed,
    for  any given $\lambda>\underline{a}$,  we have that $c^*_{\chi}=2\sqrt{\underline{a}}$ whenever $\chi\mu<\min\Big\{
\frac{a_{\inf}b_{\inf}}{a_{\inf}+ a_{\sup} },
\frac{b_{\inf}(\lambda-\underbar{a})}{ \lambda-\underbar{a}+ \sqrt{\underline{a}}\big(\sqrt{\lambda-\underline{a}} +\sqrt{\underline{a}}\big)}
 \Big\}$. Again, Theorem \ref{Main-tm1}(1) and Theorem \ref{Main-tm2} imply that the minimal least mean speed $c_0^*=2\sqrt {\underbar{a}}$ of transition front solutions to the time heterogeneous Fisher-KPP equation
  \eqref{kpp-eq} is also the minimal least mean speed of the transition front solutions to the chemotaxis model \eqref{P0} with  $\chi$ sufficiently small.

\item[(6)] It remains open whether for fixed $0<\lambda\le \underbar{a}$, when $\chi>0$ is sufficiently small, for any $\underbar{c}>c_0^*=2\sqrt{\underbar{a}}$, \eqref{P0} has
  a transition front solution connecting $(0,0)$ and $(u^*(t),v^*(t))$ with least mean speed $\underbar{c}$.

\end{itemize}
\end{rk}

The rest of the paper is organized as follows. In section 2,  we construct proper sub-solutions and super-solutions of some equations related to \eqref{P0-1} with certain $c(t)$,
 which will be  of great use in the proofs of the main results. We prove Theorem \ref{Main-tm1} and Theorem \ref{Main-tm2} in sections 3
 and 4, respectively.

\section{Sub- and super-solutions}

In this section, we construct proper sub-solutions and super-solutions of some equations related to \eqref{P0-1} with certain $c(t)$.

For any fixed $0<\kappa<\min\{\kappa_\chi, \sqrt{\underline{a}}\} (\le \min\{\sqrt \lambda, \sqrt{\underline{a}}\})$,
let $c_{\kappa}(t)=\frac{a(t)+\kappa^2}{\kappa}$,
\begin{equation}
\label{phi-k-eq}
\phi_{\kappa}(x)=e^{-\kappa x}, \quad \forall \, x\in\R,
\end{equation}
and
\begin{equation}\label{def-of-sup-sol}
\phi_{\kappa}^{+}(x)=\min\{\phi_{\kappa}(x), \frac{a_{\sup}}{b_{\inf}-\chi\mu}\}, \quad \forall\ x\in\R.
\end{equation}
It is not difficult to see that
\begin{equation}\label{linear eqt of phi-k}
\frac{d^2}{dx^2}\phi_{\kappa} +c_\kappa(t)\frac{d}{dx}\phi_{\kappa}+a(t)\phi_{\kappa}=\Big(\kappa^2-\kappa c(t) +a(t)\Big)\phi_{\kappa}=0, \quad \forall\ x\in\R,\ t\in\R.
\end{equation}
Note that
\begin{equation}\label{c_k-hat-formula-1}
\underline{c}_{\kappa}:=\liminf_{t-s\to\infty}\frac{1}{t-s}\int_{s}^{t}c_{\kappa}(y)dy=\frac{\underline{a}+\kappa^2}{\kappa},\quad  \forall\ \kappa>0,
\end{equation}
and
\begin{equation}\label{c_k-hat-formula-2}
\overline{c}_{\kappa}:=\limsup_{t-s\to\infty}\frac{1}{t-s}\int_{s}^{t}c_{\kappa}(y)dy=\frac{\overline{a}+\kappa^2}{\kappa},\quad  \forall\ \kappa>0.
\end{equation}
Hence,
\begin{equation*}
\frac{a_{\inf}+\kappa^2}{\kappa}\leq \underline{c}_{\kappa}\le \overline{c}_\kappa\leq \frac{a_{\sup}+\kappa^2}{\kappa},\quad  \forall\ \kappa>0.
\end{equation*}

Fix $0<\kappa<\min\{\kappa_\chi, \sqrt{\underline{a}}\}$ and $0<\beta_0<\frac{1}{3}$. Let
\begin{align}
\label{def-of-set-e}
\mathcal{E}_{\kappa,\beta_0,M}=\{ \phi\in C(\R, C_{\rm unif}^b(\R))\,|& \,  0\leq \phi(x,t)\leq \phi^+_{\kappa}(x)\,\, {\rm and} |\phi(x+h,t)-\phi(x,t)|\le M |h|^{\beta_0},\nonumber\\
& |\phi(x,t+h)-\phi(x,t)|\le M |h|^{\beta_0} \,\, \forall\, x,t,h\in\R,\, |h|\le 1\},
\end{align}
where $M$ is a positive constant to be determined later. For  every $\phi\in\mathcal{E}_{\kappa,\beta_0,M}$, consider
\begin{equation}
\label{L-operator}
\mathcal{L}_{\kappa,\phi}(u)=0,
\end{equation}
where
\begin{equation*}
\mathcal{L}_{\kappa,\phi}(u)=\partial_t u-\partial_{xx}u -(c_{\kappa}(t)-\chi\partial_{x}\psi(\cdot,\cdot;\phi))\partial_{x}u -(a(t)-\chi\lambda \psi(\cdot,\cdot;\phi)-(b(t)-\chi\mu)u)u,
\end{equation*}
and $\psi(\cdot,\cdot;\phi)$ is given by
\begin{align}\label{v-formula}
\psi(x,t;\phi)&=\mu\int_{0}^{\infty}\frac{e^{-\lambda s }}{\sqrt{4\pi s}}\Big[\int_{\R}e^{-\frac{|x-y|^2}{4s}}\phi(y,t)dy \Big]ds\nonumber\\
&=\frac{\mu}{\sqrt{\pi}}\int_{0}^{\infty}\int_{\R}e^{-\lambda s-|y|^2}\phi(x+2\sqrt{s}y,t)dyds.
\end{align}
It is not difficult to prove that $\psi(\cdot,\cdot;\phi)$ solves
$$
\partial_{xx}\psi(x,t;\phi)-\lambda \psi(x,t;\phi)+\mu \psi(x,t;\phi)=0, \quad\ x\in\R,\ t\in\R.
$$

Note that, for every $\phi\in\mathcal{E}_{\kappa,\beta_0,M}$, it holds that
\begin{align}\label{space-derivatie of v}
\partial_{x}\psi(x,t;\phi)&=\mu\int_{0}^{\infty}\frac{e^{-\lambda s}}{\sqrt{4\pi s}}\Big[\int_{\R}\frac{(y-x)e^{-\frac{|x-y|^2}{4s}}}{2s}\phi(y,t) dy\Big]ds\nonumber\\
&=\frac{\mu}{\sqrt{\pi}}\int_{0}^{\infty}\frac{e^{-\lambda s}}{\sqrt{s}}\Big[\int_{\R}ze^{-z^2}\phi(x+2z\sqrt{s},t) dz\Big]ds.
\end{align}
Note also that,  for given $\phi \in\mathcal{E}_{\kappa,\beta_0,M}$, if $u(x,t)=\phi(x,t)$ is an entire solution of \eqref{L-operator}, then
$(U(x,t)$, $V(x,t))=(\phi(x,t),\psi(x,t;\phi))$ is an entire solution of \eqref{P0-1} with $c(t)=c_\kappa(t)$. In the following,
for given $\phi \in\mathcal{E}_{\kappa,\beta_0,M}$, we
construct proper sub- and super-solutions of \eqref{L-operator}.

\begin{defin}\label{sup-sub sol} For each given $\phi\in\mathcal{E}_{\kappa,\beta_0,M}$, a function $u\in C^{2,1}(D)$, where $D\subset \R\times\R$, is called a super-solution (resp.   sub-solution) of \eqref{L-operator} on $D$ if
$$
\mathcal{L}_{\kappa,\phi}(u)(x,t)\geq 0  \,\,\, (\text{resp}. \,\, \mathcal{L}_{\kappa,\phi}(u)(x,t)\leq 0)\,\,\, \text{for } \ (x,t)\in D.
$$
\end{defin}

The following Lemma provides some useful estimates on $\psi(\cdot,\cdot,\phi)$ and $\p_x\psi(\cdot,\cdot,\phi)$ for each $\phi\in\mathcal{E}_{\kappa}$.

\begin{lem}\label{estimate on v equations}
\begin{itemize}
\item[(i)] For every $\phi\in\mathcal{E}_{\kappa,\beta_0,M},$ we have that
\begin{equation}\label{pointwise-estimate on v}
0\leq \psi(x,t;\phi)\leq \min\{\frac{\mu a_{\sup}}{\lambda(b_{\inf}-\chi\mu)},\ \frac{\mu}{\lambda-\kappa^2}\phi_{\kappa}(x)\}, \quad \forall \ x\in\R,\ \forall\ t\in\R
\end{equation}
and
\begin{equation}\label{pointwise-estimate on space derivative of v}
|\partial_{x}\psi(x,t;\phi)|\leq \frac{\mu(\sqrt{\lambda-\kappa^2}+\kappa)}{\lambda-\kappa^2}\phi_{\kappa}(x),  \quad \forall \ x\in\R,\ \forall\ t\in\R.
\end{equation}

\item[(ii)] Let $\phi\in\mathcal{E}_{\kappa,\beta_0,M}$ and $\{\phi_{n}\}_{n\geq 1}\subset \mathcal{E}_{\kappa,\beta_0,M}$ such that $\phi_{n}(x,t)\to \phi(x,t)$ as $n\to\infty$ uniformly on every compact subset of $\R\times\R$. Then
\begin{equation*}
\psi(x,t;\phi_n)\to \psi(x,t;\phi) \quad\text{and}\quad \psi_x(x,t;\phi_n)\to \psi_x(x,t;\phi)\ \text{as}\ n\to\infty
\end{equation*}
uniformly on every compact subset of $\R\times\R$.
\end{itemize}
\end{lem}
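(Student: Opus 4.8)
The plan is to prove part (i) by exploiting the two equivalent representations \eqref{v-formula} together with the pointwise bound $0\le\phi(x,t)\le\phi_\kappa^+(x)$ valid for all $\phi\in\mathcal{E}_{\kappa,\beta_0,M}$, and to prove part (ii) by a routine dominated-convergence argument built on those same kernels. For the lower bound $\psi\ge 0$ in \eqref{pointwise-estimate on v}, note that every factor in the heat-kernel representation is nonnegative and $\phi\ge 0$. For the first upper bound $\psi\le\frac{\mu a_{\sup}}{\lambda(b_{\inf}-\chi\mu)}$, I would use $\phi(y,t)\le\frac{a_{\sup}}{b_{\inf}-\chi\mu}$ and pull this constant out, leaving $\frac{\mu}{\sqrt\pi}\int_0^\infty\int_\R e^{-\lambda s-|y|^2}\,dy\,ds=\frac{\mu}{\sqrt\pi}\cdot\sqrt\pi\cdot\frac1\lambda=\frac\mu\lambda$, which yields the claimed constant. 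For the competing upper bound $\psi\le\frac{\mu}{\lambda-\kappa^2}\phi_\kappa(x)$, I would instead insert $\phi(x+2\sqrt s\,y,t)\le e^{-\kappa(x+2\sqrt s\,y)}=\phi_\kappa(x)e^{-2\kappa\sqrt s\,y}$ into the second form of \eqref{v-formula}.

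The key computation is then the Gaussian integral
\begin{equation*}
\frac{1}{\sqrt\pi}\int_\R e^{-|y|^2-2\kappa\sqrt s\,y}\,dy=e^{\kappa^2 s},
\end{equation*}
obtained by completing the square, after which the $s$-integral becomes $\int_0^\infty e^{-(\lambda-\kappa^2)s}\,ds=\frac1{\lambda-\kappa^2}$, finite precisely because $\kappa<\sqrt\lambda$. This produces the bound $\frac{\mu}{\lambda-\kappa^2}\phi_\kappa(x)$ and so \eqref{pointwise-estimate on v} follows by taking the minimum of the two estimates. For the gradient bound \eqref{pointwise-estimate on space derivative of v}, I would start from the second representation in \eqref{space-derivatie of v}, insert the same exponential bound $\phi(x+2z\sqrt s,t)\le\phi_\kappa(x)e^{-2\kappa\sqrt s\,z}$ and the triangle inequality $|\partial_x\psi|\le\frac{\mu}{\sqrt\pi}\phi_\kappa(x)\int_0^\infty\frac{e^{-\lambda s}}{\sqrt s}\bigl[\int_\R|z|\,e^{-z^2-2\kappa\sqrt s\,z}\,dz\bigr]ds$. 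The $z$-integral must be split at $z=0$ (or handled via completing the square in $e^{-z^2-2\kappa\sqrt s\,z}=e^{\kappa^2 s}e^{-(z+\kappa\sqrt s)^2}$), producing a Gaussian-moment expression whose $s$-integral evaluates, after one more completion of the square and a standard $\int_0^\infty$ computation, to exactly $\frac{\sqrt{\lambda-\kappa^2}+\kappa}{\lambda-\kappa^2}$; this matching of the constant is the most delicate bookkeeping step.

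For part (ii), I would apply the Lebesgue dominated convergence theorem to both representation integrals. The hypotheses guarantee $\phi_n\to\phi$ uniformly on compacta and a uniform bound $0\le\phi_n\le\phi_\kappa^+$; combined with the integrable, $x$-localized majorants $e^{-\lambda s-|y|^2}\phi_\kappa^+(x+2\sqrt s\,y)$ and $\frac{e^{-\lambda s}}{\sqrt s}|z|\,e^{-z^2}\phi_\kappa^+(x+2z\sqrt s)$ furnished by the computations above, convergence of $\psi(\cdot,\cdot;\phi_n)$ and $\psi_x(\cdot,\cdot;\phi_n)$ follows pointwise, and uniformity on compact sets is upgraded using the uniform Hölder continuity built into $\mathcal{E}_{\kappa,\beta_0,M}$ (which makes the convergence equicontinuous). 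The main obstacle throughout is not conceptual but computational: correctly carrying the $e^{\kappa^2 s}$ factor through the square-completions so that the final constants in \eqref{pointwise-estimate on v} and \eqref{pointwise-estimate on space derivative of v} emerge exactly as stated, particularly the $\sqrt{\lambda-\kappa^2}+\kappa$ in the numerator of the gradient estimate.
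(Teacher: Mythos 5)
Your proposal is correct and follows essentially the same route as the paper: bound $\phi$ by the constant $\tfrac{a_{\sup}}{b_{\inf}-\chi\mu}$ and by $\phi_\kappa$ in the two kernel representations, complete the square to extract the factor $e^{\kappa^2 s}$, and use $|z|\le|w|+\kappa\sqrt{s}$ to arrive at the constant $\tfrac{\sqrt{\lambda-\kappa^2}+\kappa}{\lambda-\kappa^2}$. The only cosmetic difference is in part (ii), where the paper splits the integral into a compact piece plus uniformly small tails while you invoke dominated convergence upgraded by equicontinuity; both are routine and valid.
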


\begin{proof} (i)
The following arguments are inspired from the proofs of \cite[Lemmas 2.2 \& 2.3]{SaSh2}. So we refer the reader to \cite{SaSh2} for more details on the estimates. Let $\phi\in\mathcal{E}_{\kappa,\beta_0,M}$ be given. Since $0\leq \phi(x,t)\leq \phi_{\kappa}(x)$, it follows from \eqref{v-formula} that
\begin{equation}\label{a-1}
\begin{split}
\psi(x,t;\phi) \leq \mu\int_{0}^{\infty}\frac{e^{-\lambda s }}{\sqrt{4\pi s}}\Big[\int_{\R}e^{-\frac{|x-y|^2}{4s}}e^{-\kappa y}dy \Big]ds=\frac{\mu}{\lambda-\kappa^2}\phi_{\kappa}(x).
\end{split}
\end{equation}
On the other hand, we have
\begin{equation}\label{a-2}
\begin{split}
0\leq \psi(x,t;\phi)&\leq \mu\Big[\int_{0}^{\infty}\frac{e^{-\lambda s }}{\sqrt{4\pi s}}\Big[\int_{\R}e^{-\frac{|x-y|^2}{4s}}dy \Big]ds\Big]\sup_{x\in\R}\phi(x,t)=\frac{\mu}{\lambda}\sup_{x\in\R}\phi(x,t).
\end{split}
\end{equation}
Inequality \eqref{pointwise-estimate on v} follows from \eqref{a-1} and \eqref{a-2}.

Using \eqref{space-derivatie of v} we have
\begin{equation}\label{a-3}
\begin{split}
|\partial_{x}\psi(x,t;\phi)|\leq \frac{\mu e^{-\kappa x}}{\sqrt{\pi}}\int_{0}^{\infty}\frac{e^{-(\lambda-\kappa^2) s}}{\sqrt{s}}\Big[\int_{\R}(|z| +\kappa
\sqrt{s})e^{-z^2}dz\Big]ds=\frac{\mu(\sqrt{\lambda-\kappa^2}+\kappa)}{\lambda-\kappa^2}\phi_{\kappa}(x).
\end{split}
\end{equation}
Inequality \eqref{pointwise-estimate on space derivative of v} then follows from \eqref{a-3}. This completes the proof of (i).

(ii) Let $\phi\in\mathcal{E}_{\kappa,\beta_0,M}$ and $\{\phi_{n}\}_{n\geq 1}\subset \mathcal{E}_{\kappa,\beta_0,M}$ be such that $\phi_{n}(x,t)\to \phi(x,t)$ as $n\to\infty$ uniformly on every compact subset of $\R\times\R$. Then
$$
\lim_{n\to\infty}\sup_{ (x,t)\in[-K,K]^2}\int_0^R\int_{-R}^{R}
e^{-\lambda s}e^{-|y|^2}|\phi_{n}(x+2\sqrt{s}y,t)-\phi(x+2\sqrt{s}y,t)|dyds=0, \forall K>0, \ R>0.
$$
On the other hand, observe that
$$
\lim_{R\to\infty}\sup_{n\geq 1, x\in\R,\ t\in\R}\int_{\{s\geq R \ \text{or}\ |y|\geq R\}}e^{-\lambda s}e^{-|y|^2}\phi_{n}(x+2\sqrt{s}y,t)dyds=0.
$$
Therefore, it follows from \eqref{v-formula} that $\psi(x,t;\phi_n)\to \psi(x,t;\phi)$ as $n\to\infty$ uniformly on every compact subset of $\R\times\R$.

Similarly, using \eqref{space-derivatie of v}, the similar arguments to the above yield that  $\psi_x(x,t;\phi_n)\to \psi_x(x,t;\phi)$ as $n\to\infty$ uniformly on every compact subset of $\R\times\R$.
\end{proof}

 By \eqref{c_k-hat-formula-1}, we have that
\begin{equation}\label{lower bound for c(t)}
\underline{c}_{\kappa}-2\kappa\geq \frac{\underline{a}-\kappa^2}{\kappa}>0, \quad \forall\ 0<\kappa< \min\{\kappa_\chi,\sqrt{\underline{a}}\}.
\end{equation}
Hence, since  {\bf (H1)} holds, there is $\varepsilon>0$ such that
\begin{equation}\label{espilon choice}
0<\varepsilon<\min\{\kappa, \frac{\underline{a}-\kappa^2}{\kappa}\} \quad\text{and}\quad \frac{b_{\inf}-\chi\mu}{\chi\mu}> \frac{(\kappa+\varepsilon)(\sqrt{\lambda-\kappa^2}+\kappa)-\lambda}{\lambda-\kappa^2}.
\end{equation}
With this choice of $\varepsilon$, it readily follows that
$\underline{c}_{\kappa}-2\kappa-\varepsilon=\frac{1}{\kappa}(\underline{a}-\kappa(\kappa+\varepsilon))>0.$

\begin{lem}
\label{A-lem}
Fix an $\epsilon>0$ satisfying \eqref{espilon choice}.
 There is $A\in W_{loc}^{1,\infty}(\R)\cap L^{\infty}(\R)$ such that
\begin{equation}\label{A0 eq}
A'(t)+\varepsilon(c_{\kappa}(t)-2\kappa-\varepsilon)>A_0:=\frac{\varepsilon}{2}(\underline{c}_{\kappa}-2\kappa-\varepsilon)=\frac{\varepsilon}{2\kappa}(\underline{a}-\kappa(\kappa+\varepsilon))>0, \quad \text{ a.\, e.\,\,   }\ t\in\R.
\end{equation}
Moreover,  there exist $\{t_{n}\}_{n\in\mathbb{Z}}$ such that $t_{n}<t_{n+1}$, $t_n\to\pm\infty$ as $n\to\pm\infty$
 and $A\in C^1(t_n,t_{n+1})$ for every $n\in\mathbb{Z}$.
\end{lem}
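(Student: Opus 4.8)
The plan is to reduce \eqref{A0 eq} to the construction of a single bounded potential whose derivative dominates a function of strictly negative greatest mean, and then to build that potential as a value function.

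First I would set $g(t):=\varepsilon(c_\kappa(t)-2\kappa-\varepsilon)$ and $w(t):=A_0-g(t)$, so that \eqref{A0 eq} is literally the requirement $A'(t)>w(t)$ a.e. Since $a$ is bounded and uniformly H\"older continuous by {\bf (H)}, the functions $c_\kappa$, $g$ and $w$ are bounded and uniformly continuous on $\R$. By \eqref{c_k-hat-formula-1} the least mean of $g$ is $\underline{g}=\varepsilon(\underline{c}_\kappa-2\kappa-\varepsilon)=2A_0$, whence the greatest mean of $w$ equals $\overline{w}=A_0-\underline{g}=-A_0<0$, the strict negativity being exactly the content of $A_0>0$ guaranteed by \eqref{espilon choice}. \emph{The essential point is this strictly negative greatest mean; everything else is bookkeeping.}

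To secure a strict margin I would fix $c:=A_0/2$, set $\tilde w:=w+c$ (so that $\overline{\tilde w}=-A_0/2<0$), and define the value function
\[
A(t):=\sup_{s\le t}\int_s^t\tilde w(\tau)\,d\tau .
\]
Choosing $s=t$ gives $A\ge 0$, and the sign of $\overline{\tilde w}$ controls the tail: there is $T_0>0$ with $\int_s^t\tilde w\le-\tfrac{A_0}{4}(t-s)$ whenever $t-s\ge T_0$, so for such $s$ the integral is negative, while for $t-T_0\le s\le t$ it is at most $\|\tilde w\|_\infty T_0$. Hence the supremum is attained over a bounded range of $s$ and $0\le A\le\|\tilde w\|_\infty T_0$, giving $A\in L^\infty(\R)$. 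Because $t\mapsto\int_s^t\tilde w$ is Lipschitz with constant $\|\tilde w\|_\infty$ uniformly in $s$, the supremum $A$ is globally Lipschitz, so $A\in W^{1,\infty}(\R)\subset W^{1,\infty}_{\mathrm{loc}}(\R)\cap L^\infty(\R)$. Restricting the supremum defining $A(t')$ to $s\le t$ yields the superadditivity $A(t')\ge A(t)+\int_t^{t'}\tilde w$ for $t'>t$; dividing by $t'-t$ and using continuity of $\tilde w$ gives the a.e.\ bound $A'(t)\ge\tilde w(t)=w(t)+c$, i.e.\ $A'(t)+g(t)\ge A_0+c>A_0$ a.e., which is \eqref{A0 eq} with room to spare.

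I expect the ``moreover'' clause --- producing breakpoints $\{t_n\}$ off which $A$ is genuinely $C^1$ --- to be the main obstacle, since the value function is only Lipschitz a priori and the closed set $\{A=0\}$ need not decompose into finitely many intervals on compact sets. The cleanest way around this is to spend the slack $c$: mollifying at scale $\delta$ with a standard kernel $\rho_\delta$, the function $A_\delta:=A*\rho_\delta\in C^\infty(\R)$ keeps the bounds $0\le A_\delta\le\|\tilde w\|_\infty T_0$ and satisfies $A_\delta'=A'*\rho_\delta\ge(w+c)*\rho_\delta=(w*\rho_\delta)+c\ge w+\tfrac{c}{2}>w$ once $\delta$ is small, because $w*\rho_\delta\to w$ uniformly by uniform continuity of $w$. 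Such an $A_\delta$ is $C^\infty$, hence trivially $C^1$ on any intervals $(t_n,t_{n+1})$ (e.g.\ $t_n=n$), and satisfies \eqref{A0 eq} for every $t$; renaming $A_\delta$ as $A$ completes the proof. Alternatively one may keep $A$ itself and, writing $A(t)=\sup_{u\ge0}\int_{t-u}^t\tilde w$ and applying Danskin's theorem to the fixed constraint $u\ge0$, identify $A'=\tilde w$ on the open set $\{A>0\}$ and $A'=0$ on the interior of $\{A=0\}$, so that $A$ is $C^1$ away from $\partial\{A>0\}$ and one takes $\{t_n\}$ to be these boundary points --- a route that additionally requires checking their local finiteness, which the mollification argument renders unnecessary.
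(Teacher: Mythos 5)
Your construction is correct, and it is worth noting that the paper does not actually prove this lemma in situ: its entire ``proof'' is a citation of Lemma~2.2 of the first part of the companion series \cite{SaSh_7_I} (which is where the piecewise-$C^1$ phrasing of the ``moreover'' clause comes from). Your argument therefore supplies a self-contained substitute. The reduction is right: \eqref{A0 eq} is exactly $A'(t)>w(t)$ with $w=A_0-\varepsilon(c_\kappa-2\kappa-\varepsilon)$, and by \eqref{c_k-hat-formula-1} together with the positivity of $\underline{c}_\kappa-2\kappa-\varepsilon$ ensured by \eqref{espilon choice}, the greatest mean of $w$ is $-A_0<0$; the value function $A(t)=\sup_{s\le t}\int_s^t(w+c)$ with $c=A_0/2$ is then nonnegative, bounded (since the integrand's averages over long windows are eventually $\le -A_0/4$, the supremum is realized on $s\in[t-T_0,t]$), globally Lipschitz, and superadditive in the sense $A(t')\ge A(t)+\int_t^{t'}(w+c)$, which yields $A'\ge w+c$ at every point of differentiability. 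The mollification step is the right way to discharge the ``moreover'' clause: since $A'\ge w+c$ a.e.\ and $w$ is uniformly continuous (by {\bf (H)}), $A*\rho_\delta$ is smooth, keeps the $L^\infty$ bounds, and satisfies $ (A*\rho_\delta)'\ge w+c/2>w$ everywhere for small $\delta$, so a globally $C^1$ (indeed $C^\infty$) function satisfies the piecewise-$C^1$ requirement trivially with, say, $t_n=n$. The only stylistic difference from the cited source is that the reference's construction apparently produces a genuinely piecewise-smooth $A$ (hence the breakpoints $t_n$ in the statement), whereas your smoothing buys a single smooth $A$ at the negligible cost of halving the strict margin $c$; both deliver the strict inequality in \eqref{A0 eq}.
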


\begin{proof}
 Note that $0<\underline{c}_{\kappa}\leq \overline{c}_{\kappa}<\infty$. The lemma follows from  \cite[Lemma 2.2]{SaSh_7_I} and its proof.
\end{proof}

We introduce the following expressions
\begin{equation}\label{A1 eq}
A_1=b_{\sup}-\chi\mu+\frac{\chi\mu\Big(\kappa(\sqrt{\lambda^2-\kappa^2}+\kappa)+\lambda\Big)}{\lambda-\kappa^2}
\end{equation}
and
\begin{equation}\label{A2 eq}
A_2=b_{\inf}-\chi\mu+\frac{\chi\mu\Big(\lambda-(\kappa+\varepsilon)(\sqrt{\lambda-\kappa^2}+\kappa)\Big)}{\lambda-\kappa^2}.
\end{equation}
It follows  from \eqref{espilon choice} that $A_1>0$ and $A_2>0$. Finally, let us take
\begin{equation}\label{choice-of-d-eq}
d=e^{\|A\|_{\infty}}(1+\frac{A_0}{A_1})
\end{equation} and define
\begin{equation}\label{phi lower}
\phi_{\kappa,\varepsilon,d,A}(x,t)=\phi_{\kappa}(x)-de^{A(t)}\phi_{\kappa+\varepsilon}(x).
\end{equation}
We introduce the following functions
\begin{equation}\label{D-boundary equation}
x_{\kappa,\varepsilon,d,A}^-(t):=\frac{\ln(d)+A(t)}{\varepsilon}=\frac{\|A\|_{\infty}+A(t)+\ln(1+\frac{A_0}{A_1})}{\varepsilon}>0,
\end{equation}
and
\begin{equation}\label{x plus def}
x^{+}_{\kappa,\varepsilon,d,A}(t):=\frac{\ln(\frac{\kappa+\varepsilon}{\kappa}d)+A(t)}{\varepsilon}, \quad \forall\, t\in\R.
\end{equation}
It is clear from the definition of $x_{\kappa,\varepsilon,d,A}^{+}(t)$ and $x_{\kappa,\varepsilon,d,A}^{-}(t) $ that
\begin{equation}\label{z-eq1}
x_{\kappa,\varepsilon,d,A}^{+}(t)-x_{\kappa,\varepsilon,d,A}^{-}(t)=\frac{\ln(\frac{\kappa+\varepsilon}{\kappa})}{\varepsilon},\quad \forall\, t\in\R.
\end{equation}
The next result provides some useful information on the relationship between the functions $\phi_{\kappa,\varepsilon,d,A}(x,t)$, $x^{+}_{\kappa,\varepsilon,d,A}(t)$, and $x^{-}_{\kappa,\varepsilon,d,A}(t)$.

\begin{lem}\label{Lem 2} Let  $\phi_{\kappa,\varepsilon,d,A}(x,t)$, $x^{+}_{\kappa,\varepsilon,d,A}(t)$, and $x^{-}_{\kappa,\varepsilon,d,A}(t)$ be given by \eqref{phi lower},  \eqref{x plus def}, and  \eqref{D-boundary equation},  respectively. Then, the following hold.
\begin{description}
\item[(i)] For every $t\in\R$, we have that
\begin{equation}\label{x minus def}
\phi_{\kappa,\varepsilon,d,A}(x_{\kappa,\varepsilon,d,A}^-(t),t)=0 \,\, \text{and}\,\, (x-x_{\kappa,\varepsilon,d,A}^-(t)) \phi_{\kappa,\varepsilon,d,A}(x,t)>0, \ \forall x\not = x_{\kappa,\varepsilon,d,A}^-(t).
\end{equation}
\item[(ii)]  For every $t\in\R$, we have that
\begin{equation}
0<\frac{\ln(d)-\|A\|_{\infty}}{\varepsilon}\leq x_{\kappa,\varepsilon,d,A}^-(t) < x_{\kappa,\varepsilon,d,A}^+(t)\leq \frac{\ln(\frac{\kappa+\varepsilon}{\kappa}d)+\|A\|_{\infty}}{\varepsilon} <\infty.
\end{equation}
\item[(iii)] For every $t\in\R$, the function $\R\ni x\mapsto \phi_{\kappa,\varepsilon,d,A}(x,t)$ is strictly increasing on the interval $(-\infty,  x_{\kappa,\varepsilon,d,A}^+(t)]$ and is strictly decreasing on the interval $[ x_{\kappa,\varepsilon,d,A}^+(t),\infty)$. Hence, we have that
$\max_{x\in\R}\phi_{\kappa,\varepsilon,d,A}(x,t)= \phi_{\kappa,\varepsilon,d,A}( x_{\kappa,\varepsilon,d,A}^+(t),t)$ for each $t\in\R$. Moreover, it holds that
\begin{equation}
0< \inf_{t\in\R}\phi_{\kappa,\varepsilon,d,A}( x_{\kappa,\varepsilon,d,A}^+(t),t).
\end{equation}
\end{description}
\end{lem}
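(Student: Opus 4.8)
The plan is to exploit the fully explicit form of $\phi_{\kappa,\varepsilon,d,A}$. Writing $\phi_\kappa(x)=e^{-\kappa x}$ and $\phi_{\kappa+\varepsilon}(x)=e^{-(\kappa+\varepsilon)x}$, I would first record from \eqref{phi lower} the factorization
\[
\phi_{\kappa,\varepsilon,d,A}(x,t)=e^{-\kappa x}\bigl(1-de^{A(t)}e^{-\varepsilon x}\bigr),
\]
so that for each fixed $t$ the entire $t$-dependence is carried by the single scalar $de^{A(t)}$. Since $e^{-\kappa x}>0$, both the sign and, after differentiation, the monotonicity of $x\mapsto\phi_{\kappa,\varepsilon,d,A}(x,t)$ are governed by the elementary function $1-de^{A(t)}e^{-\varepsilon x}$ and its derivative, and all three assertions reduce to tracking where this bracket changes sign.

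For \textbf{(i)}, I would substitute $x=x^-_{\kappa,\varepsilon,d,A}(t)$; by \eqref{D-boundary equation} one has $e^{\varepsilon x^-}=de^{A(t)}$, so the bracket vanishes and $\phi_{\kappa,\varepsilon,d,A}(x^-,t)=0$. Using the same relation, $de^{A(t)}e^{-\varepsilon x}=e^{\varepsilon(x^-_{\kappa,\varepsilon,d,A}(t)-x)}$, whence the bracket equals $1-e^{\varepsilon(x^--x)}$, which is positive for $x>x^-$ and negative for $x<x^-$; multiplying by $e^{-\kappa x}>0$ yields $(x-x^-_{\kappa,\varepsilon,d,A}(t))\,\phi_{\kappa,\varepsilon,d,A}(x,t)>0$. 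For \textbf{(ii)}, the two middle inequalities are immediate: $x^-<x^+$ is exactly \eqref{z-eq1} (the gap $\varepsilon^{-1}\ln\frac{\kappa+\varepsilon}{\kappa}$ is strictly positive), while $-\|A\|_\infty\le A(t)\le\|A\|_\infty$ applied to the definitions of $x^\mp$ produces the outer two. The only point requiring the specific choice \eqref{choice-of-d-eq}, $d=e^{\|A\|_\infty}(1+A_0/A_1)$, is the strict positivity of the leftmost quantity, where $\ln(d)-\|A\|_\infty=\ln(1+A_0/A_1)>0$ because $A_0,A_1>0$.

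For \textbf{(iii)}, I would differentiate to get $\partial_x\phi_{\kappa,\varepsilon,d,A}=e^{-\kappa x}\bigl(-\kappa+(\kappa+\varepsilon)de^{A(t)}e^{-\varepsilon x}\bigr)$; setting the bracket to zero gives the unique critical point $e^{\varepsilon x}=\frac{\kappa+\varepsilon}{\kappa}de^{A(t)}$, i.e.\ $x=x^+_{\kappa,\varepsilon,d,A}(t)$ by \eqref{x plus def}. Since $x\mapsto e^{-\varepsilon x}$ is decreasing, the bracket is positive for $x<x^+$ and negative for $x>x^+$, yielding the claimed strict increase on $(-\infty,x^+]$, strict decrease on $[x^+,\infty)$, and hence that the maximum in $x$ is attained at $x^+$. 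Evaluating there, the relation $de^{A(t)}=\frac{\kappa}{\kappa+\varepsilon}e^{\varepsilon x^+}$ collapses the expression to $\phi_{\kappa,\varepsilon,d,A}(x^+_{\kappa,\varepsilon,d,A}(t),t)=\frac{\varepsilon}{\kappa+\varepsilon}e^{-\kappa x^+_{\kappa,\varepsilon,d,A}(t)}$, which is positive for each $t$.

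The one step that is more than a pointwise computation, and thus the main obstacle, is the final inequality $\inf_{t\in\R}\phi_{\kappa,\varepsilon,d,A}(x^+_{\kappa,\varepsilon,d,A}(t),t)>0$: pointwise positivity does not suffice. I would obtain a uniform lower bound from the upper estimate $x^+_{\kappa,\varepsilon,d,A}(t)\le\varepsilon^{-1}\bigl(\ln(\tfrac{\kappa+\varepsilon}{\kappa}d)+\|A\|_\infty\bigr)$ established in (ii), which in turn rests on the boundedness $\|A\|_\infty<\infty$ furnished by Lemma~\ref{A-lem}. This uniform ceiling on $x^+(t)$ forces $e^{-\kappa x^+(t)}$ to stay above a positive constant independent of $t$, and multiplying by $\frac{\varepsilon}{\kappa+\varepsilon}$ delivers the desired strictly positive infimum.
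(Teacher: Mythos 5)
Your proposal is correct and follows essentially the same route as the paper: explicit sign analysis of the factor $1-de^{A(t)}e^{-\varepsilon x}$ for (i)--(ii), the first-derivative test with the unique critical point at $x^+_{\kappa,\varepsilon,d,A}(t)$ for (iii), and a uniform lower bound on $\phi_{\kappa,\varepsilon,d,A}(x^+_{\kappa,\varepsilon,d,A}(t),t)$ obtained from $\|A\|_\infty<\infty$ (your closed form $\frac{\varepsilon}{\kappa+\varepsilon}e^{-\kappa x^+_{\kappa,\varepsilon,d,A}(t)}$ agrees with the paper's expression after substituting $de^{A(t)}=\frac{\kappa}{\kappa+\varepsilon}e^{\varepsilon x^+_{\kappa,\varepsilon,d,A}(t)}$). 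No gaps.
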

\begin{proof}
Lemma \ref{Lem 2} (i) and (ii) follow from \eqref{D-boundary equation}, \eqref{x plus def} and \eqref{z-eq1}. Observe that
$$
\partial_{x}\phi_{\kappa,\varepsilon,d,A}( x,t)=-\kappa \Big(e^{\varepsilon x}-\frac{(\kappa+\varepsilon)de^{A(t)}}{\kappa} \Big)e^{-(\kappa+\varepsilon)x}, \quad \forall\ x\in\R,\ \forall\ t\in\R,
$$
and $$\partial_{x}\phi_{\kappa,\varepsilon,d,A}( x_{\kappa,\varepsilon,d,A}^+(t),t)=0, \quad \forall\ t\in\R.$$
Hence the first statement in Lemma \ref{Lem 2} (iii) follows from the first derivative test.  We have that
$$
\phi_{\kappa,\varepsilon,d,A}( x_{\kappa,\varepsilon,d,A}^+(t),t)=\frac{\varepsilon}{\kappa}e^{A(t)-(\kappa+\varepsilon)x^+_{\kappa,\varepsilon, d,A}(t)}\geq \frac{\varepsilon}{\kappa}e^{-\frac{(\kappa+\varepsilon)}{\varepsilon}\Big(2\|A\|_{\infty}+\ln(d\frac{\kappa+\varepsilon}{\kappa})\Big)}>0,\quad \forall\, t\in\R.
$$
This completes the proof of the Lemma.
\end{proof}

The next result provides us with sub- and super-solutions of \eqref{L-operator} for every $\phi\in\mathcal{E}_{\kappa,\beta_0,M}$.
\begin{tm}\label{strong-sup-solution}
Suppose that {\bf (H1)} holds and let  $\phi\in\mathcal{E}_{\kappa,\beta_0,M}$. Then the following hold.
\begin{description}
\item[(i)] The function $u(x,t)=\phi_{\kappa}(x)$ is a super-solution of \eqref{L-operator} on $\R\times\R$.

\item[(ii)] The constant function $u(x,t)=\frac{a_{\sup}}{b_{\inf}-\chi\mu}$ is a super-solution of  \eqref{L-operator} on $\R\times\R$.

\item[(iii)]  The function $\phi_{\kappa,\varepsilon,d,A}(x,t)$ is a sub-solution of \eqref{L-operator} on the set $D_{\kappa,\varepsilon,d,A}$ defined by
$$D_{\kappa,\varepsilon,d,A}:=\{(x,t)\in\R\times\R\ :\ x\geq x_{\kappa,\varepsilon,d,A}^-(t)\}.$$

\item[(iv)] For every $0<\delta\leq \frac{b_{\inf}-\chi\mu\Big(1+\frac{a_{\sup}}{a_{\inf}}\Big)}{(b_{\inf}-\chi\mu)(b_{\sup}-\chi\mu)}$, the constant function $u(x,t)=\delta$ is a sub-solution of  \eqref{L-operator} on $\R\times\R$.
\end{description}

\end{tm}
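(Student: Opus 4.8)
My plan is to verify all four items by substituting the candidate into $\mathcal{L}_{\kappa,\phi}$, separating the linear Fisher--KPP part $\partial_t u-\partial_{xx}u-c_\kappa(t)\partial_x u-a(t)u$ (handled by the identity \eqref{linear eqt of phi-k}) from the chemotaxis-plus-logistic remainder $\chi\partial_x\psi\,\partial_x u+\chi\lambda\psi u+(b(t)-\chi\mu)u^2$, and controlling the remainder by the pointwise bounds \eqref{pointwise-estimate on v}--\eqref{pointwise-estimate on space derivative of v} of Lemma \ref{estimate on v equations} together with \textbf{(H1)} and the defining inequality \eqref{min-kappa-eq1} of $\kappa_\chi$. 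The three ``constant/exponential'' items are immediate. For (i), since $\phi_\kappa$ is $t$-independent and is annihilated by the linear part, one is left with $\mathcal{L}_{\kappa,\phi}(\phi_\kappa)=\chi\partial_x\psi\,\partial_x\phi_\kappa+\chi\lambda\psi\phi_\kappa+(b(t)-\chi\mu)\phi_\kappa^2$; using $\partial_x\phi_\kappa=-\kappa\phi_\kappa$, $\psi\ge0$, and \eqref{pointwise-estimate on space derivative of v}, this is bounded below by $\big(b_{\inf}-\chi\mu-\chi\mu\,\tfrac{\kappa(\sqrt{\lambda-\kappa^2}+\kappa)}{\lambda-\kappa^2}\big)\phi_\kappa^2\ge0$, the last step being exactly \eqref{min-kappa-eq1}. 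For the constants in (ii) and (iv) all derivatives vanish and $\mathcal{L}_{\kappa,\phi}(u)=u\big((b(t)-\chi\mu)u-a(t)+\chi\lambda\psi\big)$; for $u=\tfrac{a_{\sup}}{b_{\inf}-\chi\mu}$ the factor $(b(t)-\chi\mu)u\ge a_{\sup}\ge a(t)$ forces $\ge0$, while for $u=\delta$ the constant bound $\psi\le\tfrac{\mu a_{\sup}}{\lambda(b_{\inf}-\chi\mu)}$ reduces the desired $\le0$ to $\tfrac{\chi\mu a_{\sup}}{b_{\inf}-\chi\mu}+(b_{\sup}-\chi\mu)\delta\le a_{\inf}$, which is solvable with $\delta>0$ precisely under \textbf{(H1)}.

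The core is (iii). Writing $w=\phi_{\kappa,\varepsilon,d,A}=\phi_\kappa-de^{A(t)}\phi_{\kappa+\varepsilon}$ (so $0\le w\le\phi_\kappa$ and $de^{A(t)}\phi_{\kappa+\varepsilon}\le\phi_\kappa$ on $D_{\kappa,\varepsilon,d,A}$ by Lemma \ref{Lem 2}), I would first split $\mathcal{L}_{\kappa,\phi}(w)$ as above. The linear part annihilates $\phi_\kappa$, and since $\partial_t(de^{A(t)}\phi_{\kappa+\varepsilon})=A'(t)de^{A(t)}\phi_{\kappa+\varepsilon}$, a short exponent computation collapses it to
$$
\partial_t w-\partial_{xx}w-c_\kappa(t)\partial_x w-a(t)w=-de^{A(t)}\phi_{\kappa+\varepsilon}\big[A'(t)+\varepsilon\big(c_\kappa(t)-2\kappa-\varepsilon\big)\big],
$$
which by \eqref{A0 eq} of Lemma \ref{A-lem} is strictly less than $-A_0\,de^{A(t)}\phi_{\kappa+\varepsilon}$. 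This strictly negative ``driver,'' of order $e^{-(\kappa+\varepsilon)x}$, is what must overcome the remainder.

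For the remainder I would use \eqref{pointwise-estimate on v}--\eqref{pointwise-estimate on space derivative of v} to get $\chi\lambda\psi w\le\tfrac{\chi\mu\lambda}{\lambda-\kappa^2}\phi_\kappa w$, $(b-\chi\mu)w^2\le(b_{\sup}-\chi\mu)w^2$, and, crucially, $\chi\partial_x\psi\,\partial_x w\le\chi\,\tfrac{\mu(\sqrt{\lambda-\kappa^2}+\kappa)}{\lambda-\kappa^2}\phi_\kappa\,|\partial_x w|$ with $\partial_x w=-\kappa w+\varepsilon\,de^{A(t)}\phi_{\kappa+\varepsilon}$ kept as a single quantity. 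Factoring out $\phi_\kappa$ and setting $\rho:=de^{A(t)}e^{-\varepsilon x}=de^{A(t)}\phi_{\kappa+\varepsilon}/\phi_\kappa\in(0,1]$ on $D$ (this is exactly what $x\ge x^-_{\kappa,\varepsilon,d,A}(t)$ means, by Lemma \ref{Lem 2}(i)), the inequality $\mathcal{L}_{\kappa,\phi}(w)\le0$ collapses to the one-variable inequality
$$
\phi_\kappa\Big[\chi\,\tfrac{\mu(\sqrt{\lambda-\kappa^2}+\kappa)}{\lambda-\kappa^2}\,|\varepsilon\rho-\kappa(1-\rho)|+\tfrac{\chi\mu\lambda}{\lambda-\kappa^2}(1-\rho)+(b_{\sup}-\chi\mu)(1-\rho)^2\Big]\le A_0\rho,
$$
where along $D$ one has $\phi_\kappa=(\rho/(de^{A(t)}))^{\kappa/\varepsilon}$. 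I would close this using the choice \eqref{choice-of-d-eq}, which via Lemma \ref{Lem 2}(ii) guarantees $de^{A(t)}\ge 1+A_0/A_1$ (with $A_1$ from \eqref{A1 eq}, and $A_1,A_2>0$ by \eqref{espilon choice}); since $\kappa>\varepsilon$, making $de^{A(t)}$ this large pushes the whole slab $D$ to the right and forces $\phi_\kappa$ to be correspondingly small there.

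I expect the delicate balancing in the last display to be the main obstacle, and it is genuinely tight. The positive remainder decays like $e^{-2\kappa x}$ while the driver decays only like $e^{-(\kappa+\varepsilon)x}$, so domination fails for large $x$ unless one exploits that $\phi_\kappa$ is uniformly small on $D$, which is the role of the large $d$ in \eqref{choice-of-d-eq}. Two sub-regimes are the real danger: at the left edge $x=x^-_{\kappa,\varepsilon,d,A}(t)$, where $w$ and the quadratic term vanish but $\partial_x w=\varepsilon\,de^{A(t)}\phi_{\kappa+\varepsilon}\ne0$ keeps the gradient term alive, so the partial cancellation encoded in $|\partial_x w|=\phi_\kappa|\varepsilon\rho-\kappa(1-\rho)|$ (which is lost if one naively applies the triangle inequality to $\partial_x w$) is essential; and the bulk, where $w\sim\phi_\kappa$ and the logistic term $(b_{\sup}-\chi\mu)w^2$ is the dominant positive contribution and must be beaten by $A_0\rho$. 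Tracking the exact form of $\partial_x w$ and the precise lower bound $de^{A(t)}\ge1+A_0/A_1$ is what makes the estimate close, and the continuity in $t$ across the breakpoints $\{t_n\}$ of Lemma \ref{A-lem} lets one conclude the subsolution inequality holds a.e.\ on $D$.
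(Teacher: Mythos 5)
Your proof is correct and, in substance, is the paper's own: every item is verified by substituting into $\mathcal{L}_{\kappa,\phi}$ and invoking \eqref{linear eqt of phi-k}, the bounds \eqref{pointwise-estimate on v}--\eqref{pointwise-estimate on space derivative of v}, condition \eqref{min-kappa-eq1} for (i), and Lemma \ref{A-lem} for (iii); parts (i), (ii), (iv) match the paper's computations essentially verbatim. In (iii) your bookkeeping is different and cleaner: the paper keeps the three decay classes $e^{-2\kappa x}$, $\phi_\kappa e^{\tilde A}$, $\phi_{\kappa,\varepsilon,d,A}e^{\tilde A}$ separate and packages them into the constants $A_0$, $A_1$, $A_2$ of \eqref{A0 eq}--\eqref{A2 eq}, whereas you collapse everything into one scalar inequality in $\rho=de^{A(t)}e^{-\varepsilon x}\in(0,1]$ and close it via $\phi_\kappa=(de^{A(t)})^{-\kappa/\varepsilon}\rho^{\kappa/\varepsilon}\le (de^{A(t)})^{-\kappa/\varepsilon}\rho$ (using $\kappa>\varepsilon$), which does work for $d$ large enough. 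Three points in your commentary deserve correction, though none invalidates the argument. First, retaining the exact form of $\partial_x\phi_{\kappa,\varepsilon,d,A}$ is not essential: the paper applies the triangle inequality to it and instead absorbs the resulting positive $d(\kappa+\varepsilon)e^{\tilde A}$ contribution into the negative terms of order $\phi_\kappa e^{\tilde A}$ coming from $\chi\lambda\psi\,\phi_{\kappa,\varepsilon,d,A}$ and the quadratic term -- this is exactly why $A_2>0$, i.e.\ the second condition in \eqref{espilon choice}, is imposed; your $\rho$-inequality would also survive the triangle inequality since $\rho^{\kappa/\varepsilon}=o(\rho)$ as $\rho\to 0$. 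Second, ``domination fails for large $x$'' is backwards: the ratio of the $e^{-2\kappa x}$ remainder to the $e^{-(\kappa+\varepsilon)x}$ driver is $e^{-(\kappa-\varepsilon)x}\to 0$, so the binding case is the left edge $x=x^-_{\kappa,\varepsilon,d,A}(t)$ (where $\rho=1$ and $\phi_\kappa$ is largest on the slab), which you do then correctly single out. Third, what actually closes the estimate is that $d$ is large relative to $A_1 e^{\|A\|_\infty}/A_0$ (equivalently a lower bound on a power of $de^{A(t)}$), not literally $de^{A(t)}\ge 1+A_0/A_1$; the precise constant in \eqref{choice-of-d-eq} is a normalization detail shared with (and arguably a typo in) the paper's own last line.
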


\begin{proof} Let $\phi\in\mathcal{E}_{\kappa,\beta_0,M}$ be given.  The theorem can be proved by the similar arguments as those in \cite[Theorem 2.1]{SaSh2}. For the completeness, we provide a proof in the following.

(i) Using \eqref{linear eqt of phi-k} and \eqref{pointwise-estimate on space derivative of v}, for every $(x,t)\in\R\times\R$, we have
\begin{equation}\label{a-5}
\begin{split}
\mathcal{L}_{\kappa,\phi}(\phi_{\kappa})(x,t)&=-\kappa\chi\partial_{x}\psi(x,t;u)\phi_{\kappa}+(\chi\lambda \psi(x,t;\phi)+(b(t)-\chi\mu)\phi_{\kappa})\phi_{\kappa}\\
&\geq \Big(b_{\inf}-\chi\mu-\kappa\chi\frac{\mu(\sqrt{\lambda-\kappa^2}+\kappa)}{\lambda-\kappa^2}\Big)\phi_{\kappa}^2+\chi\lambda v(\cdot,\cdot;u)\phi_{\kappa}\geq 0.
\end{split}
\end{equation}
Hence (i) follows.

(ii) Since $0\leq \psi(x,t;\phi)$,  we have that
\begin{equation*}
\begin{split}
\mathcal{L}_{\kappa,\phi}(\frac{a_{\sup}}{b_{\inf}-\chi\mu})=&-\Big(a(t)-\chi\lambda \psi(\cdot,\cdot;\phi)-(b(t)-\chi\mu)\frac{a_{\sup}}{b_{\inf}-\chi\mu} \Big)\frac{a_{\sup}}{b_{\inf}-\chi\mu}\\
\geq & \Big((b(t)-\chi\mu)\frac{a_{\sup}}{b_{\inf}-\chi\mu} -a(t)\Big)\frac{a_{\sup}}{b_{\inf}-\chi\mu}\ge 0.
\end{split}
\end{equation*}
Hence (ii) follows.

(iii) We first note that $x>0$ whenever  $(x,t)\in  D_{\kappa,\varepsilon,d,A}$. For   $(x,t)\in  D_{\kappa,\varepsilon,d,A}$,   
let $\tilde A(t,x)=A(t)-(\kappa+\varepsilon)x$ and using Lemma \ref{estimate on v equations} and  Lemma \ref{A-lem}, we have
\begin{equation*}
 \begin{split}
 &\mathcal{L}_{\kappa,\phi}(\phi_{\kappa,\varepsilon,d,A})(x,t)\\
 =& -dA'e^{\tilde A(t,x)}-\Big[\kappa^2 e^{-\kappa x}-d(\kappa+\varepsilon)^2e^{\tilde A(t,x) }\Big]-(c_{\kappa}(t)-\chi\p_{x}\psi(\cdot,\cdot;\phi))\Big[-\kappa e^{-\kappa x}+d(\kappa+\varepsilon)e^{\tilde A(t,x)} \Big]\\
 &-\Big[ a(t)-\chi\lambda\psi(\cdot,\cdot;\phi)-(b(t)-\chi\mu)\phi_{\kappa,\varepsilon,d,A} \Big]\phi_{\kappa,\varepsilon,d,A}\\
 =& -d\Big(A'+\frac{\varepsilon}{\kappa}\Big(a(t)-\kappa(\kappa+\varepsilon) \Big) \Big)e^{\tilde A(t,x)}+\chi\partial_{x}\psi(\cdot,\cdot;\phi)\Big(-\kappa e^{-\kappa x} +d(\kappa+\varepsilon) e^{\tilde A(t,x)}\Big)\\
 &+\chi\lambda\psi(\cdot,\cdot;\phi)\phi_{\kappa,\varepsilon,d,A} +(b(t)-\chi\mu)e^{-2\kappa x} -d(b(t)-\chi\mu)\Big(e^{-\kappa x}+\phi_{\kappa,\varepsilon,d,A} \Big)e^{\tilde A(t,x)}\\
 \leq & -d\Big(A'+\varepsilon(c_{\kappa}(t)-2\kappa-\varepsilon)\Big)e^{\tilde A(t,x)}+\frac{\chi\lambda\mu}{\lambda-\kappa^2}\phi_{\kappa}(\phi_{\kappa}-de^{\tilde A(t,x)})\\
 & +\frac{\chi\mu(\sqrt{\lambda-\kappa^2}+\kappa)}{\lambda-\kappa^2}\phi_{\kappa}\Big(\kappa e^{-\kappa x}  +d(\kappa+\varepsilon)e^{\tilde A(t,x)} \Big)\\
 & +(b(t)-\chi\mu)e^{-2\kappa x} -d(b(t)-\chi\mu)\Big(\phi_{\kappa}+\phi_{\kappa,\varepsilon,d,A} \Big)e^{\tilde A(t,x)}\\
 \leq & -\Big(\frac{d\varepsilon}{2\kappa}\Big(\underline{a}-\kappa(\kappa+\varepsilon) \Big)e^{A(t)+(\kappa-\varepsilon)x}- (b(t)-\chi\mu)-\frac{\chi\mu(\lambda +\kappa(\sqrt{\lambda-\kappa^2}+\kappa))}{\lambda-\kappa^2}\Big)e^{-2\kappa x} \\
 &-d\Big((b(t)-\chi\mu)+  \chi\mu\Big(\frac{\lambda-(\kappa+\varepsilon)(\sqrt{\lambda-\kappa^2}+\kappa)}{\lambda-\kappa^2} \Big)\Big)\phi_{\kappa}e^{\tilde A(t,x)} -d(b(t)-\chi\mu)\phi_{\kappa,\varepsilon,d,A}e^{\tilde A(t,x)}\\
 \leq& - A_{0}e^{-\|A\|_{\infty}}\Big(d- \frac{A_{1}e^{\|A\|_{\infty}}}{A_{0}} \Big)e^{-2\kappa x}-dA_2\phi_{\kappa}e^{\tilde A(t,x)}-d(b_{\inf}-\chi\mu)\phi_{\kappa,\varepsilon,d,A}e^{\tilde A(t,x)}\\
 \leq &0,
 \end{split}
 \end{equation*}
 where $A_0$, $A_1$, and $A_2$ are given by \eqref{A0 eq}, \eqref{A1 eq} and \eqref{A2 eq} respectively. Thus, (iii) follows.

 (iv) Let  $0<\delta\leq \frac{b_{\inf}-\chi\mu\Big(1+\frac{a_{\sup}}{a_{\inf}}\Big)}{(b_{\inf}-\chi\mu)(b_{\sup}-\chi\mu)}$. We have that
 \begin{equation*}
\begin{split}
\mathcal{L}_{\kappa,\phi}(\delta)=&-\Big(a(t)-\chi\lambda \psi(\cdot,\cdot;\phi)-(b(t)-\chi\mu)\delta \Big)\delta\\
\leq & -(a_{\inf}-\frac{\chi\mu a_{\sup}}{b_{\inf}-\chi\mu}-(b_{\sup}-\chi\mu)\delta)\delta\\
=&-a_{\inf}(b_{\sup}-\chi\mu)\Big( \frac{b_{\inf}-\chi\mu\big(1+\frac{a_{\sup}}{a_{\inf}}\big)}{(b_{\inf}-\chi\mu)(b_{\sup}-\chi\mu)}-\delta\Big)\\
\leq &0.
\end{split}
\end{equation*}
 (iv) then follows.
\end{proof}

We recall from Lemma \ref{Lem 2} (iv) that $ \inf_{t\in\R}\phi_{\kappa,\varepsilon,d,A}(x^{+}_{\kappa,\varepsilon,d,A}(t),t) >0$, where $x^{+}_{\kappa,\varepsilon,d,A}(t) $ is defined by \eqref{x plus def}. Moreover, Lemma \ref{Lem 2} guarantees that for each given $t\in\R$ and every $\delta\in (0 , \inf_{t\in\R}\phi_{\kappa,\varepsilon,d,A}( x^{+}_{\kappa,\varepsilon,d,A}(t) ))$, there is a unique $x_{\kappa,\varepsilon,d,A}(t;\delta)\in (x^{-}_{\kappa,\varepsilon,d,A}(t) , x^{+}_{\kappa,\varepsilon,d,A}(t) )$ such that
$$
\phi_{\kappa,\varepsilon,d,A}(x_{\kappa,\varepsilon,d,A}(t;\delta),t) =\delta.
$$
 Let $0<\delta_0<\min\{\frac{b_{\inf}-\chi\mu\Big(1+\frac{a_{\sup}}{a_{\inf}}\Big)}{(b_{\inf}-\chi\mu)(b_{\sup}-\chi\mu)}, \inf_{t\in\R}\phi_{\kappa,\varepsilon,d,A}(x^{+}_{\kappa,\varepsilon,d,A}(t),t)\}$ be fixed and define
 \begin{equation}\label{time dependent lower - solution}
 \phi_{\kappa}^{-}(x,t)=
 \begin{cases}
 \phi_{\kappa,\varepsilon,d,A}(x,t), \quad \text{if}\  x\geq x_{\kappa,\varepsilon,d,A}(t;\delta_0)\cr
 \delta_0, \qquad \qquad \quad \ \text{if}\  x\leq x_{\kappa,\varepsilon,d,A}(t;\delta_0).
 \end{cases}
\end{equation}
It should be noted that the function $\R\ni t\mapsto x_{\kappa,\varepsilon,d,A}(t;\delta_0)$ is Lipschitz  continuous.

\section{Existence of transition front solutions}

In this section, we prove the existence of transition front solutions of \eqref{P0}. We suppose that {\bf (H1)} holds, and $\mathcal{E}_{\kappa,\beta_0,M}$ is defined as in the previous section. The functions $\phi_{\kappa}^{+}$ and $\phi_{\kappa}^{-}$ are  given by \eqref{def-of-sup-sol} and \eqref{time dependent lower - solution}, respectively. {Our main idea to prove the existence of transition front solutions of \eqref{P0}
is to prove that there is $\phi(\cdot,\cdot)\in  \mathcal{E}_{\kappa,\beta_0,M}$ such that $(U(x,t),V(x,t))=(\phi(x,t),\psi(x,t;\phi))$ is an entire solution of
\eqref{P0-1} with $c(t)=c_\kappa(t)$ and that $(u(x,t),v(x,t))=(U(x-\int_0^ tc_\kappa(s)ds,t), V(x-\int_0^t c_\kappa(s)ds,t))$ is a transition front solution of \eqref{P0}.
To do so, we first prove some lemmas.}

\smallskip

For every  $\phi\in \mathcal{E}_{\kappa,\beta_0,M}$, $t_0\in\R$ and $u_0\in C^{b}_{\rm unif}(\R)$,  let $\Phi(x,t;t_0,u_0,\phi)$, $x\in\R$, $t\geq t_0$ be the solution of
\begin{equation}\label{Un def}
\begin{cases}
\mathcal{L}_{\kappa,\phi}(\Phi)=0, \quad x\in\R, \ t>t_0\\
\Phi(x,t_0;t_0,u_0,\phi)=u_0(x), \quad \forall\ x\in\R,
\end{cases}
\end{equation}
where $\mathcal{L}_{\kappa,\phi}$ is given by \eqref{L-operator} (the existence of $\Phi(x,t;t_0,u_0,\phi)$ follows from general semigroup theory).

\begin{lem}\label{Main lem 3}
For every $\phi\in\mathcal{E}_{\kappa,\beta_0,M}$ and $t_0\in\R$, let $\Phi(x,t;t_0,\phi_{\kappa}^+,\phi)$ be given by \eqref{Un def}. Then the following hold.
\begin{description}
\item[(i)] For any $t_2<t_1$, $0\leq \Phi(x,t;t_2,\phi_{\kappa}^+,\phi)\leq \Phi(x,t;t_1,\phi_{\kappa}^+,\phi)\leq \phi_{\kappa}^+(x)$ for every $x\in\R$ and $ t\geq t_1$.

\item[(ii)] For any $t_0\in\R$, $\phi_{\kappa}^{-}(x,t)\leq \Phi(x,t;t_0,\phi_{\kappa}^+,\phi)$ for all $x\in\R$ and $t\geq t_0$, where $\phi_{\kappa,\varepsilon,d,A}$ is given by \eqref{phi lower}.
\end{description}

\end{lem}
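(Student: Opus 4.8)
The whole argument is an exercise in the parabolic comparison principle applied to the scalar semilinear operator $\mathcal{L}_{\kappa,\phi}$ with $\phi$ held fixed. Once $\phi$ is frozen, $\mathcal{L}_{\kappa,\phi}(u)=0$ is a reaction--diffusion equation in the single unknown $u$, whose drift $c_\kappa(t)-\chi\partial_x\psi(\cdot,\cdot;\phi)$ and reaction $(a(t)-\chi\lambda\psi(\cdot,\cdot;\phi)-(b(t)-\chi\mu)u)u$ have bounded coefficients thanks to Lemma \ref{estimate on v equations}. Because every function compared below is bounded and takes values in a fixed compact interval on which the reaction is Lipschitz in $u$, the comparison principle is available on all of $\R$; I would record this once and invoke it freely, in the slightly extended form that admits continuous, piecewise-$C^{2,1}$ sub/super-solutions with gradient jumps of the correct sign.

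For part (i) I would first establish $0\le\Phi(x,t;t_0,\phi_\kappa^+,\phi)\le\phi_\kappa^+(x)$. The lower bound comes from comparing $\Phi$ against the solution $u\equiv 0$ (note $\mathcal{L}_{\kappa,\phi}(0)=0$) together with the nonnegativity of the datum $\phi_\kappa^+$. For the upper bound, observe that $\phi_\kappa^+=\min\{\phi_\kappa,\,a_{\sup}/(b_{\inf}-\chi\mu)\}$ is, by Theorem \ref{strong-sup-solution}(i)--(ii), a minimum of two super-solutions, hence a generalized super-solution with a downward gradient jump at its single crossing point; since it coincides with the initial datum, comparison gives $\Phi\le\phi_\kappa^+$ for $t\ge t_0$. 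The monotonicity in the initial time is then a cocycle argument: by uniqueness $\Phi(\cdot,t;t_2,\phi_\kappa^+,\phi)=\Phi(\cdot,t;t_1,\Phi(\cdot,t_1;t_2,\phi_\kappa^+,\phi),\phi)$, and the upper bound at $t=t_1$ gives $\Phi(\cdot,t_1;t_2,\phi_\kappa^+,\phi)\le\phi_\kappa^+=\Phi(\cdot,t_1;t_1,\phi_\kappa^+,\phi)$; order preservation of the flow in its initial data (comparison from time $t_1$ onward) then yields $\Phi(\cdot,t;t_2,\phi_\kappa^+,\phi)\le\Phi(\cdot,t;t_1,\phi_\kappa^+,\phi)$ for $t\ge t_1$.

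For part (ii) the crux is that $\phi_\kappa^-$ from \eqref{time dependent lower - solution} is a generalized sub-solution. On $\{x\le x_{\kappa,\varepsilon,d,A}(t;\delta_0)\}$ it equals the constant $\delta_0$, a sub-solution by Theorem \ref{strong-sup-solution}(iv) since the choice of $\delta_0$ respects the required bound; on $\{x\ge x_{\kappa,\varepsilon,d,A}(t;\delta_0)\}\subset D_{\kappa,\varepsilon,d,A}$ it equals $\phi_{\kappa,\varepsilon,d,A}$, a sub-solution by Theorem \ref{strong-sup-solution}(iii). The two pieces agree (both equal $\delta_0$) along the Lipschitz curve $x=x_{\kappa,\varepsilon,d,A}(t;\delta_0)$, and there, since $x_{\kappa,\varepsilon,d,A}(t;\delta_0)<x^+_{\kappa,\varepsilon,d,A}(t)$ and $\phi_{\kappa,\varepsilon,d,A}(\cdot,t)$ is strictly increasing to the left of its peak (Lemma \ref{Lem 2}(iii)), the spatial derivative jumps upward from $0$ to a positive value --- the admissible direction for a sub-solution corner. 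Working on the intervals $(t_n,t_{n+1})$ on which $A\in C^1$ (Lemma \ref{A-lem}) makes each piece a classical sub-solution, and continuity across the times $t_n$ lets one chain the estimates. Since $\phi_{\kappa,\varepsilon,d,A}\le\phi_\kappa$ pointwise and $\delta_0<a_{\sup}/(b_{\inf}-\chi\mu)$, one checks $\phi_\kappa^-(\cdot,t_0)\le\phi_\kappa^+=\Phi(\cdot,t_0;t_0,\phi_\kappa^+,\phi)$, and comparison delivers $\phi_\kappa^-(x,t)\le\Phi(x,t;t_0,\phi_\kappa^+,\phi)$ for all $x$ and $t\ge t_0$.

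The main obstacle I anticipate is legitimizing the comparison principle across the moving, non-smooth interface of $\phi_\kappa^-$: I must verify that the upward gradient jump genuinely prevents $\Phi-\phi_\kappa^-$ from attaining a new negative minimum on the curve $x=x_{\kappa,\varepsilon,d,A}(t;\delta_0)$, which is precisely the content of the sign of that jump. The merely Lipschitz (not $C^1$) time dependence of $A$ and of the interface, together with the unboundedness of the spatial domain, are the secondary technical points; both are absorbed by the uniform boundedness of all functions involved and by the piecewise-$C^1$ structure provided by Lemma \ref{A-lem}.
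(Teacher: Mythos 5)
Your proof is correct and follows essentially the same route as the paper: both parts rest on the sub- and super-solutions of Theorem \ref{strong-sup-solution} together with the parabolic comparison principle, and your cocycle argument for the monotonicity in $t_0$ is exactly the paper's. The only (immaterial) difference is organizational: where you glue the pieces into generalized super-/sub-solutions with corners of the admissible sign, the paper instead runs separate comparisons --- against each of the two super-solutions individually in (i), and on the two moving subdomains $\{x\ge x^{-}_{\kappa,\varepsilon,d,A}(t)\}$ and $\{x\le x_{\kappa,\varepsilon,d,A}(t;\delta_0)\}$ with boundary data on the separating curves in (ii).
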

\begin{proof} (i) Since $\Phi(x,t_0;t_0,\phi_{\kappa}^+,\phi)\leq \phi_{\kappa}(x)$ for every $x\in\R$, it follows from comparison principle for parabolic equations and Theorem \ref{strong-sup-solution} (i) that
$$
\Phi(x,t;t_0,\phi_{\kappa}^+,\phi)\leq \phi_{\kappa}(x), \quad  \forall \ x\in\R,\ \forall\ t\geq t_0.
$$
On the other hand, since $\phi_{\kappa}^+(x)\leq\frac{a_{\sup}}{b_{\inf}-\chi\mu}$, it follows from Theorem \ref{strong-sup-solution} (ii) and  comparison principle for parabolic equations that
$$
\Phi(x,t;t_0,\phi_{\kappa}^+,\phi)\leq \frac{a_{\sup}}{b_{\inf}-\chi\mu}, \quad  \forall \ x\in\R,\ \forall\ t\geq t_0.
$$
Thus
$$
\Phi(x,t;t_0,\phi_{\kappa}^+,\phi)\leq \phi_{\kappa}^+(x)=\Phi(x,t_0;t_0,\phi_{\kappa}^+,\phi) \quad  \forall \ x\in\R,\ \forall\ t\geq t_0.
$$
Hence for any $t_2<t_1$,
$$
\Phi(x,t_1;t_2,\phi_{\kappa}^+,\phi)\leq\phi_{\kappa}^+(x)= \Phi(x,t_1;t_1,\phi_{\kappa}^+,\phi).
 $$
 Then, by comparison principle for parabolic equations, we have
$$
\Phi(x,t;t_2,\phi_{\kappa}^+,\phi)\leq \Phi(x,t;t_1,\phi_{\kappa}^+,\phi) \quad  \forall \ x\in\R,\ \forall\ t\geq t_1.
$$
This completes the proof of (i).

(ii) Note that if $\phi_{\kappa,\varepsilon,d,A}(x,t)\leq 0$ then we are done.
 Observe from (i) and Lemma \ref{Lem 2} (i) that $\phi_{\kappa,\varepsilon,d,A}(x_{\kappa,\varepsilon,d}^-(t),t)=0\leq \Phi(x_{\kappa,\varepsilon,d,A}^-(t),t;t_0,\phi_{\kappa}^{+},\phi)$ for every $t\geq t_0$.  Since $\phi_{\kappa,\varepsilon,d,A}(x,t_0)\leq \phi_{\kappa}^+(x)$ for every $x\geq x_{\kappa,\varepsilon,d,A}^-(t_0)$, hence it  follows from Theorem \ref{strong-sup-solution} (iii) and comparison principle for parabolic equations that
$$
\phi_{\kappa,\varepsilon,d,A}(x,t)\leq \Phi(x,t;t_0,\phi_{\kappa}^+,\phi),\quad \forall \ x\geq x^-_{\kappa,\varepsilon,d,A}(t),\quad  \forall t\geq t_0.
$$
 In particular, we have that
 $$
\delta_0=\phi_{\kappa,\varepsilon,d,A}(x_{\kappa,\varepsilon,d,A}(t;\delta_0),t)\leq \Phi(x_{\kappa,\varepsilon,d,A}(t;\delta_0),t;t_0,\phi_{\kappa}^+,\phi),\quad   \forall t\geq t_0.
$$
 Observe that $\delta_0\leq \phi_{\kappa}^{+}(x)=\phi(x,t_0;t_0,\phi_{\kappa}^+,\phi)$ for every $x\leq x_{\kappa,\varepsilon,d,A}(t;\delta_0)$, $t\in\R$ and $t_0\in\R$. We also note that Lemma \ref{Lem 2} implies that $u(x,t)=\delta_0$ is a sub-solution of \eqref{L-operator} on $(-\infty, x_{\kappa,\varepsilon,d,A}(\delta_0,t_0)]$. Hence, it follows from comparison principle for parabolic equations that
 $$
 \delta_0\leq \Phi(x,t;t_0,\phi_{\kappa}^+,\phi),\quad \forall\, x\leq x_{\kappa,\varepsilon,d,A}(\delta_0,t_0), \,\, \forall\, t\geq t_0.
 $$
This completes the proof of (ii).
\end{proof}

It follows from Lemma \ref{Main lem 3}  that for every $\phi\in\mathcal{E}_{\kappa,\beta_0,M}$, $\Phi(\cdot,\cdot;t_0,\phi_{\kappa}^+,\phi)$ is non-decreasing in $t_0\in\R$. Then for each $\phi\in\mathcal{E}_{\kappa,\beta_0,M}$, there is $\Phi(x,t;\phi)$ such that
\begin{equation}\label{Main-U-def}
\Phi(x,t;\phi):=\lim_{t_0\to -\infty}\Phi(x,t;t_0,\phi_{\kappa}^+,\phi)\quad \forall\, x,t\in\R.
\end{equation}
Moreover,  we have
\begin{equation}\label{z-eq2}
\phi_{\kappa}^+(x)\ge \Phi(x,t;\phi)\geq \phi_{\kappa}^-(x,t),\quad \forall\ x\in\R,\,\,\forall\ t\in\R.
\end{equation}
Hence, we introduce the following set,
\begin{equation}\label{E-kappa}
\tilde{\mathcal{E}}_{\kappa,\beta_0,M}=\{\phi\in\mathcal{E}_{\kappa,\beta_0,M}\,|\, \phi_{\kappa}^-(x,t)\leq \phi(x,t)\leq \phi_{\kappa}^+(x), \quad \forall\ x\in\R, \ \forall\ t\in\R\}.
\end{equation}

\begin{lem}\label{U-eq lem}
For every $\phi\in\tilde{\mathcal{E}}_{\kappa,\beta_0,M}$,  $\Phi(\cdot,\cdot;\phi) \in\tilde{\mathcal{E}}_{\kappa,\beta_0,M}$ when $M$ is sufficiently large and  satisfies the parabolic equation
\begin{equation}\label{Main U-eq}
\partial_{t}\Phi=\partial_{xx}\Phi+(c_{\kappa}(t)-\chi\partial_{x}\psi(\cdot,\cdot;\phi))\partial_x\Phi+(a(t)-\lambda\chi \psi(\cdot,\cdot;\phi)-(b(t)-\chi\mu)\Phi)\Phi,\,\, \forall (x,t)\in\R\times\R.
\end{equation}
\end{lem}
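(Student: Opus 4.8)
The plan is to prove the three assertions of Lemma \ref{U-eq lem} in turn: that $\Phi(\cdot,\cdot;\phi)$ satisfies the stated bounds $\phi_\kappa^-\le \Phi\le \phi_\kappa^+$, that it enjoys the H\"older regularity defining $\mathcal{E}_{\kappa,\beta_0,M}$ (so that $\Phi(\cdot,\cdot;\phi)\in\tilde{\mathcal{E}}_{\kappa,\beta_0,M}$ once $M$ is large), and that it solves the limiting equation \eqref{Main U-eq}. The bounds are essentially already in hand: by \eqref{z-eq2} we have $\phi_\kappa^+(x)\ge\Phi(x,t;\phi)\ge\phi_\kappa^-(x,t)$ for all $x,t$, which immediately gives $0\le\Phi\le\phi_\kappa^+$. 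So the substance of the proof is the regularity and the passage to the limit in the defining equation.

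For the equation \eqref{Main U-eq}, I would argue by passing to the limit $t_0\to-\infty$ in \eqref{Un def}. First, since $\phi\in\tilde{\mathcal{E}}_{\kappa,\beta_0,M}$ is fixed, the coefficients $c_\kappa(t)-\chi\partial_x\psi(\cdot,\cdot;\phi)$ and $a(t)-\chi\lambda\psi(\cdot,\cdot;\phi)-(b(t)-\chi\mu)\Phi$ appearing in $\mathcal{L}_{\kappa,\phi}$ are bounded by Lemma \ref{estimate on v equations}(i), and $\Phi(x,t;t_0,\phi_\kappa^+,\phi)$ is uniformly bounded by $0\le\Phi\le\phi_\kappa^+$ from Lemma \ref{Main lem 3}(i). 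Interior parabolic Schauder (or $L^p$) estimates then give, for each fixed $t_1\in\R$ and each compact set, uniform $C^{2+\beta,1+\beta/2}$ bounds on $\Phi(\cdot,\cdot;t_0,\phi_\kappa^+,\phi)$ on $\R\times(-\infty,t_1]$ that are independent of $t_0$ once $t_0$ is taken sufficiently negative. Since $\Phi(x,t;t_0,\phi_\kappa^+,\phi)$ is monotone nondecreasing in $t_0$ (Lemma \ref{Main lem 3}) and converges pointwise to $\Phi(x,t;\phi)$ by \eqref{Main-U-def}, these compactness estimates upgrade the convergence to $C^{2,1}_{\rm loc}$. Passing to the limit in the equation $\mathcal{L}_{\kappa,\phi}(\Phi(\cdot,\cdot;t_0,\phi_\kappa^+,\phi))=0$ (using that the coefficients do not depend on $t_0$) yields that $\Phi(\cdot,\cdot;\phi)$ is a classical entire solution of \eqref{Main U-eq}.

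For the H\"older regularity required by \eqref{def-of-set-e}, the same uniform interior estimates provide, after taking the limit, Lipschitz and hence $\beta_0$-H\"older control of $\Phi(\cdot,\cdot;\phi)$ in both $x$ and $t$ with a constant that can be bounded by the a priori data: the sup bound $a_{\sup}/(b_{\inf}-\chi\mu)$, the coefficient bounds from Lemma \ref{estimate on v equations}(i), and the H\"older exponent $\nu_0$ of $a,b$ from \textbf{(H)}. One then chooses $M$ large enough that these limiting H\"older seminorms do not exceed $M|h|^{\beta_0}$ for $|h|\le 1$, which is exactly the assertion ``$\Phi(\cdot,\cdot;\phi)\in\tilde{\mathcal{E}}_{\kappa,\beta_0,M}$ when $M$ is sufficiently large.'' Note the constant $M$ must be selected \emph{uniformly in} $\phi\in\tilde{\mathcal{E}}_{\kappa,\beta_0,M}$ and independent of $\phi$, so that the map $\phi\mapsto\Phi(\cdot,\cdot;\phi)$ maps $\tilde{\mathcal{E}}_{\kappa,\beta_0,M}$ into itself; this is what makes a subsequent fixed-point argument possible.

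The main obstacle I anticipate is producing the H\"older/gradient estimate with a constant that is genuinely uniform in $\phi$ and $t_0$, and in particular controlling the temporal regularity near the gluing discontinuity of $\phi_\kappa^-$. The bound $\phi_\kappa^-\le\Phi\le\phi_\kappa^+$ confines $\Phi$ between two explicit profiles, but $\phi_\kappa^-$ is only Lipschitz in $t$ (through $x_{\kappa,\varepsilon,d,A}(t;\delta_0)$) and $\phi_\kappa^+$ has a corner; so the $\beta_0$-H\"older control in $t$ must come from the parabolic smoothing of the equation itself rather than from the barriers. The key point is that the coefficients of $\mathcal{L}_{\kappa,\phi}$ are H\"older in $t$ uniformly in $\phi$ (the worst term being $\partial_x\psi(\cdot,\cdot;\phi)$, whose time-modulus of continuity is controlled by that of $\phi$, hence by $M|h|^{\beta_0}$), which feeds back into a uniform parabolic Schauder estimate; choosing $\beta_0<1/3$ and then $M$ large closes the self-referential loop. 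I would carry this out by citing the uniform interior parabolic estimates, recording the explicit dependence of the resulting constant only on the fixed data in \textbf{(H)}, \textbf{(H1)} and on $\kappa,\varepsilon$, and then fixing $M$ last.
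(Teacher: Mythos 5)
Your passage to the limit $t_0\to-\infty$ to obtain \eqref{Main U-eq} is sound and is essentially the paper's own final step (the paper combines its a priori bounds with Arzela--Ascoli and the interior parabolic estimates of \cite{Fri}). The genuine gap is in the membership claim $\Phi(\cdot,\cdot;\phi)\in\tilde{\mathcal{E}}_{\kappa,\beta_0,M}$, i.e.\ in producing H\"older bounds whose constant can be beaten by a single choice of $M$. You propose to get these from interior parabolic Schauder estimates. But the Schauder constant depends on the H\"older norms of the coefficients of $\mathcal{L}_{\kappa,\phi}$, and the worst coefficient, $\chi\partial_x\psi(\cdot,\cdot;\phi)$, has space--time H\"older seminorm proportional to $M$ (it inherits the modulus of continuity of $\phi$ through \eqref{space-derivatie of v}). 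So your estimate has the form $[\Phi]_{C^{\beta_0}}\le C(M)$ with $C(M)$ increasing in $M$, while the self-map property requires $C(M)\le M$. ``Choosing $\beta_0<1/3$ and then $M$ large'' does not close this loop: Schauder constants typically grow at least linearly in the coefficient norms, in which case $C(M)\le M$ may fail for \emph{every} $M$; you give no argument that the dependence is sublinear. (Estimates of Krylov--Safonov type, whose constants are independent of coefficient regularity, do not rescue the argument either, because they yield some small exponent $\alpha_0$ that cannot be prescribed in advance to be $\ge\beta_0$, and for $|h|\le 1$ a bound $C|h|^{\alpha_0}$ with $\alpha_0<\beta_0$ does not imply $M|h|^{\beta_0}$.)

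The paper avoids this circularity entirely, and that is the idea missing from your proposal. It represents $\Phi(\cdot,\cdot;t_0,\phi_\kappa^+,\phi)$ by the variation-of-constants formula for the semigroup $T(t)=e^{t(\Delta-I)}$ and, crucially, uses the identity $\partial_{xx}\psi=\lambda\psi-\mu\phi$ to absorb the zeroth-order term $-\chi\lambda\psi\,\Phi$ and to put the drift term in divergence form: the Duhamel integrand becomes $T(t-s)\partial_x\big((c_\kappa(s)-\chi\partial_x\psi)\Phi\big)$ plus $T(t-s)\big((a(s)+1-\chi\mu\phi)\Phi\big)$ minus the quadratic term. The smoothing estimate $\|T(\tau)\partial_x w\|_{X^\beta}\le C_\beta\tau^{-(\frac12+\beta)}e^{-\tau}\|w\|_\infty$ then requires only \emph{sup-norm} bounds on $\phi$, $\psi$, $\partial_x\psi$ and $\Phi$, all of which are controlled by $a_{\sup}/(b_{\inf}-\chi\mu)$ and Lemma \ref{estimate on v equations}(i), independently of $M$ and of $\phi$. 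This yields \eqref{b-eq1} and \eqref{b-eq2} with constants independent of $M$; the embedding $X^\beta\hookrightarrow C^{2\nu}_{\rm unif}(\R)$ (available because $\beta_0/2<\beta<\tfrac12-\beta_0$, which is where $\beta_0<1/3$ enters) converts them into space and time H\"older constants that are fixed \emph{before} $M$ is chosen, so that $M\ge\max\{2\tilde C_\beta,\hat C_\beta\}$ works with no self-reference. If you wish to keep a PDE-estimate route, you must first establish a smoothing estimate whose constant depends on the coefficients only through their sup norms --- which is precisely what the semigroup representation provides.
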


\begin{proof} We will apply the similar  arguments as those  in \cite[Lemma 3.5]{SaSh2} to prove this lemma.

First of all, let $\{T(t)\}_{t\ge 0}$ denotes the $C_0-$semigroup generated by $\Delta-I$ on $C^b_{\rm unif}(\R^N)$.  Using the fact that $\p_{xx}\psi(\cdot,\cdot,\phi)=\lambda \psi(\cdot,\cdot,\phi)-\mu \phi$, the variation of constant formula yields that
\begin{equation}
\begin{split}
&\Phi(x,t;t_0,\phi_{\kappa}^+,\phi)\\
=&T(t-t_0)\phi_{\kappa}^+ + \int_{t_0}^tT(t-s)((c_{\kappa}(s)-\chi\p_x \psi(\cdot,s,\phi))\p_x\Phi(\cdot,s;t_0,\phi_{\kappa}^+,\phi))ds \\
&+ \int_{t_0}^tT(t-s)((a(s)+1-\lambda\chi \psi(\cdot,s;\phi)-(b(s)-\chi\mu)\Phi(\cdot,s;t_0,\phi_{\kappa}^+,\phi))\Phi(\cdot,s;t_0,\phi_{\kappa}^+,\phi))ds\\
=& \underbrace{T(t-t_0)\phi_{\kappa}^+}_{I^1_{t_0}(\cdot,t)} + \underbrace{\int_{t_0}^tT(t-s)\partial_x((c_{\kappa}(s)-\chi\p_x \psi(\cdot,s,\phi))\Phi(\cdot,s;t_0,\phi_{\kappa}^+,\phi))ds}_{I^2_{t_0}(\cdot,t)} \\
&+ \underbrace{\int_{t_0}^tT(t-s)(a(s)+1-\chi\mu \phi(\cdot,s))\Phi(\cdot,s;t_0,\phi_{\kappa}^+,\phi)ds}_{I^3_{t_0}(\cdot,t)}\\
&-\underbrace{\int_{t_0}^tT(t-s)(b(s)-\chi\mu)\Phi^2(\cdot,s;t_0,\phi_{\kappa}^+,\phi)ds}_{I^4_{t_0}(\cdot,t)}.
\end{split}
\end{equation}

Next, choose $\frac{\beta_0}{2}<\beta<\frac{1}{2}-\beta_0$ (such $\beta$ exists for $0<\beta_0<\frac{1}{3}$). Let $X^{\beta}$ denotes the fractional power space associated with $\Delta-I$ on $C^{b}_{\rm unif}(\R^N)$.
We  claim that there is a constant $\tilde{C}_{\beta}$ independent of $t$, $u$, and $t_0$ such that
\begin{equation}\label{b-eq1}
\|\Phi(\cdot,t;t_0,\phi_{\kappa}^+,\phi)\|_{X^{\beta}}\leq \tilde{C}_{\beta}\Big(\frac{e^{-(t-t_0)}}{(t-t_0)^{\beta}}+1\Big),\quad  \forall t>t_0.
\end{equation}
In fact,
by \cite[Lemma 3.2]{SaSh1}, there is a constant $C_{\beta}>0$ such that
\begin{equation}\label{1-eq}
\|I^1_{t_0}(\cdot,t)\|_{X^\beta}\leq C_{\beta}(t-t_0)^{-\beta}e^{-(t-t_0)}\|\phi_{\kappa}^+\|_{\infty}=\frac{C_{\beta}e^{-(t-t_0)}a_{\sup}}{(t-t_0)^{\beta}(b_{\inf}-\chi\mu)},
\end{equation}
\begin{align}\label{2-eq}
\|I_{t_0}^2(\cdot,t)\|_{X^\beta}&\leq  C_{\beta}\int_{t_0}^{t}(t-s)^{-(\frac{1}{2}+\beta)}e^{-(t-s)}(\|c_{\kappa}\|_{\infty}
+\chi\|\p_{x}\psi(\cdot,s,\phi)\|_{\infty})\|\Phi(\cdot,s;t_0,\phi_{\kappa}^+,\phi)\|_{\infty}ds\nonumber\\
&\leq C_{\beta}\int_{t_0}^{t}(t-s)^{-(\frac{1}{2}+\beta)}e^{-(t-s)}(\frac{a_{\sup}+\kappa^2}{\kappa}+\chi\mu\|\phi(\cdot,s)\|_{\infty})\|\phi_{\kappa}^+\|_{\infty}ds
\nonumber\\
&\leq  C_{\beta}\Big( \frac{a_{\sup}+\kappa^2}{\kappa}+\frac{\chi\mu a_{\sup}}{b_{\inf}-\chi\mu}\Big)\frac{a_{\sup}\Gamma(\frac{1}{2}-\beta)}{b_{\inf}-\chi\mu},
\end{align}
\begin{align}\label{3-eq}
\|I^3_{t_0}(\cdot,t)\|_{X^\beta}&\leq
 C_{\beta}\int_{t_0}^t(t-s)^{-\beta}e^{-(t-s)}\Big(a_{\sup}+1+\chi\mu\|\phi(\cdot,s)\|_{\infty} \Big)\|\Phi(\cdot,s;t_0,\phi_{\kappa}^+,\phi)\|_{\infty}ds\nonumber\\
&\leq  C_{\beta}\Big( a_{\sup}+1+\frac{\chi\mu a_{\sup}}{b_{\inf}-\chi\mu}\Big)\frac{a_{\sup}\Gamma(1-\beta)}{b_{\inf}-\chi\mu},
\end{align}
and
\begin{equation}\label{4-eq}
\|I^4_{t_0}(\cdot,t)\|_{X^{\beta}}\leq C_{\beta}(b_{\sup}-\chi\mu)\frac{a_{\sup}^2\Gamma(1-\beta)}{(b_{\inf}-\chi\mu)^2}.
\end{equation}
The claim then follows from \eqref{1-eq}--\eqref{4-eq}.

Now, we claim that there is a constant $\bar{C}_{\beta}>0$ independent of $t_0$, $\phi$, and $t$ such that
\begin{equation}\label{b-eq2}
\|\Phi(\cdot,t+h;t_0,\phi_{\kappa}^+,\phi)-\Phi(\cdot,t;t_0,\phi_{\kappa}^+,\phi)\|_{X^{\beta}}\leq \bar{C}_{\beta}(h^{1-\beta}+h^{\frac{1}{2}-\beta})\Big(\frac{e^{-(t-t_0)}}{(t-t_0)^{\beta}}+1 \Big),  \ \forall h>0,\ t>t_0.
\end{equation}
In fact,  for $h>0$ and $t>t_0$, we have
\begin{equation}\label{1'-eq}
\|I_{t_0}^1(\cdot,t+h)-I_{t_0}^1(\cdot,t)\|_{X^\beta}=\|(T(h)-I)T(t+n)\phi_{\kappa}^+\|_{\infty}\leq \frac{C_{\beta}h^{\beta}e^{-(t-t_0)}}{(t-t_0)^{\beta}}\|\phi_{\kappa}^+\|_{\infty},
\end{equation}
\begin{align}\label{2'-eq}
&\|I_{t_0}^2(\cdot,t+h)-I_{t_0}^2(\cdot,t)\|_{X^\beta}\nonumber\\
&\le   C_{\beta}h^{\beta}\int_{t_0}^t\frac{e^{-(t-s)}}{(t-s)^{\frac{1}{2}+\beta}}\Big(\frac{a_{\sup}+\kappa^2}{\kappa}
+\chi\mu\|\phi(\cdot,s)\|_{\infty}\Big)\|\Phi(\cdot,s;t_0,\phi_{\kappa}^+,\phi)\|_{\infty}ds\nonumber\\
& \,\, + C_{\beta}\int_{t}^{t+h}\frac{e^{-(t+h-s)}}{(t+h-s)^{\frac{1}{2}+\beta}}\Big(\frac{a_{\sup}+\kappa^2}{\kappa}
+\chi\mu\|\phi(\cdot,s)\|_{\infty}\Big)\|\Phi(\cdot,s;t_0,\phi_{\kappa}^+,\phi)\|_{\infty}ds\nonumber\\
&\leq  C_{\beta}\Big(h^{\beta}\Gamma(\frac{1}{2}-\beta)+\frac{h^{\frac{1}{2}-\beta}}{\frac{1}{2}-\beta} \Big)\Big(\frac{a_{\sup}+\kappa^2}{\kappa}+\frac{\chi\mu a_{\sup}}{b_{\inf}-\chi\mu}\Big)\frac{a_{\sup}}{b_{\inf}-\chi\mu},
\end{align}
\begin{equation}\label{3'-eq}
\|I_{t_0}^3(\cdot,t+h)-I_{t_0}^3(\cdot,t)\|_{X^\beta}\leq  C_{\beta}\Big(h^{\beta}\Gamma(1-\beta)+\frac{h^{1-\beta}}{1-\beta} \Big)\Big(a_{\sup}+1+\frac{\chi\mu a_{\sup}}{b_{\inf}-\chi\mu} \Big)\frac{a_{\sup}}{b_{\inf}-\chi\mu},
\end{equation}
and
\begin{equation}\label{4'-eq}
\|I_{t_0}^4(\cdot,t+h)-I_{t_0}^4(\cdot,t)\|_{X^\beta}\leq   C_{\beta}\Big(h^{\beta}\Gamma(1-\beta)+\frac{h^{1-\beta}}{1-\beta} \Big)\Big(b_{\sup}-\chi\mu \Big)\frac{a_{\sup}}{b_{\inf}-\chi\mu}.
\end{equation}
\eqref{b-eq2} then follows from \eqref{1'-eq}--\eqref{4'-eq}.

Letting $t_0\to -\infty$ in \eqref{b-eq2},
we obtain
\begin{equation}\label{b-eq2-1}
\|\Phi(\cdot,t+h;\phi)-\Phi(\cdot,t;\phi)\|_{X^{\beta}}\leq \bar{C}_{\beta}(h^{1-\beta}+h^{\frac{1}{2}-\beta}),  \ \forall h>0,\ t\in\R.
\end{equation}
Note that $X^{\beta}$ is continuously embedded in $C^{2\nu}_{\rm unif}(\R)$ for $0<\nu<2\beta$ (\cite{D_Henry}). This together with \eqref{b-eq1} implies that there is $\hat C_\beta>0$ such that
\begin{equation}
\label{b-eq1-1}
|\Phi(x+h,t;\phi)-\Phi(x,t;\phi)|\le \hat C_\beta |h|^{\beta_0}\quad \forall \, x,t,h\in\R,\, \, |h|\le 1.
\end{equation}
By \eqref{z-eq2}, \eqref{b-eq2-1}, and \eqref{b-eq1-1}, we have that  $|\Phi(\cdot,\cdot;\phi)\in \mathcal{\tilde E}_{\kappa,\beta_0,M}$ for every
$\phi\in \mathcal{\tilde E}_{\kappa,\beta_0,M}$ provided that $M\ge \max\{2\tilde C_\beta,\hat C_\beta\}$.

Finally, by  \eqref{b-eq1}, \eqref{b-eq2},   Arzela-Ascoli Theorem,  and \cite[Chapter 3 - Theorem 15]{Fri},  we have that
$\Phi(x,t;t_0,\phi) \to \Phi(x,t;\phi)$ in $C^{2,1}(\bar D)$ as $t_0\to -\infty$  for any bounded domain $D\subset\R\times\R$. This implies that $\Phi(\cdot,\cdot,\phi)$ satisfies \eqref{Main U-eq}.
\end{proof}

In the following, we fix $\frac{\beta_0}{2}<\beta<\frac{1}{2}-\beta_0$ and  $M\ge \max\{2\tilde C_\beta,\hat C_\beta\}$.


\begin{lem}\label{uniqueness-lem}
For every $\phi\in \mathcal{\tilde E}_{\kappa,\beta_0,M}$, the function $\Phi(\cdot,\cdot;\phi)$ given by \eqref{Main-U-def} is the only solution of \eqref{Main U-eq} in $\mathcal{\tilde E}_{\kappa,\beta_0,M}$.
\end{lem}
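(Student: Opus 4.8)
The plan is to identify $\Phi^*:=\Phi(\cdot,\cdot;\phi)$ as the \emph{maximal} solution of \eqref{Main U-eq} in $\tilde{\mathcal{E}}_{\kappa,\beta_0,M}$ and then to show it lies \emph{below} every solution in the class, which forces uniqueness. Throughout, $\phi$ is fixed, so $\psi(\cdot,\cdot;\phi)$ and $\partial_x\psi(\cdot,\cdot;\phi)$ are given functions and \eqref{Main U-eq} is simply $\mathcal{L}_{\kappa,\phi}(\Phi)=0$, a parabolic equation that is linear in the transport and quadratic in the reaction. First I would record maximality: if $\tilde\Phi\in\tilde{\mathcal{E}}_{\kappa,\beta_0,M}$ is any solution, then $\tilde\Phi(\cdot,t_0)\le\phi_\kappa^+=\Phi(\cdot,t_0;t_0,\phi_\kappa^+,\phi)$ for every $t_0$, so the comparison principle for parabolic equations gives $\tilde\Phi(x,t)\le\Phi(x,t;t_0,\phi_\kappa^+,\phi)$ for all $t\ge t_0$; letting $t_0\to-\infty$ and invoking \eqref{Main-U-def} yields $\tilde\Phi\le\Phi^*$ on $\R\times\R$. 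Since $\Phi^*\ge\phi_\kappa^->0$ and $\tilde\Phi\ge\phi_\kappa^->0$, the ratio $\Phi^*/\tilde\Phi$ is well defined and $\ge1$.

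Next I would exploit the KPP (sub-homogeneity) structure of the reaction. A direct computation from $\mathcal{L}_{\kappa,\phi}(\tilde\Phi)=0$ shows, for $\theta\ge1$,
$$
\mathcal{L}_{\kappa,\phi}(\theta\tilde\Phi)=\theta(\theta-1)(b(t)-\chi\mu)\tilde\Phi^2\ge0,
$$
so $\theta\tilde\Phi$ is a super-solution, strictly so when $\theta>1$ (here $b_{\inf}>\chi\mu$ from {\bf (H1)} and $\tilde\Phi>0$). Set $\theta^*=\sup_{(x,t)\in\R\times\R}\Phi^*(x,t)/\tilde\Phi(x,t)\in[1,\infty]$. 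I would show $\theta^*<\infty$ using the two-sided squeeze $\phi_\kappa^-(x,t)\le\tilde\Phi,\Phi^*\le\phi_\kappa^+(x)$ built into $\tilde{\mathcal{E}}_{\kappa,\beta_0,M}$: as $x\to-\infty$ the ratio is bounded by $\tfrac{a_{\sup}}{\delta_0(b_{\inf}-\chi\mu)}$; on bounded $x$-sets $\tilde\Phi\ge\phi_\kappa^-$ has a positive lower bound uniform in $t$ (Lemma \ref{Lem 2} bounds $x^\pm_{\kappa,\varepsilon,d,A}(t)$ independently of $t$); and as $x\to+\infty$ the common exponential tail $\phi_\kappa^-(x,t)/e^{-\kappa x}=1-de^{A(t)}e^{-\varepsilon x}\to1$ together with $\phi_\kappa^+(x)=e^{-\kappa x}$ forces $\Phi^*/\tilde\Phi\to1$ uniformly in $t$.

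The crux is to prove $\theta^*=1$. Suppose instead $\theta^*>1$ and put $w=\theta^*\tilde\Phi-\Phi^*\ge0$. Subtracting $\mathcal{L}_{\kappa,\phi}(\Phi^*)=0$ from $\mathcal{L}_{\kappa,\phi}(\theta^*\tilde\Phi)=g:=\theta^*(\theta^*-1)(b-\chi\mu)\tilde\Phi^2$ gives a \emph{linear} parabolic equation
$$
\partial_t w-\partial_{xx}w-(c_\kappa(t)-\chi\partial_x\psi)\partial_x w+q(x,t)\,w=g\ge0,
$$
with bounded coefficient $q$ and with $g>0$ on $\{\tilde\Phi>0\}$. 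By definition of $\theta^*$ there are $(x_n,t_n)$ with $\Phi^*(x_n,t_n)/\tilde\Phi(x_n,t_n)\to\theta^*$. The tail estimate forces this ratio to $1<\theta^*$ as $x\to+\infty$, so $\{x_n\}$ cannot tend to $+\infty$; hence $\tilde\Phi(x_n,t_n)\ge\delta_0>0$ stays bounded away from zero (when $x_n\to-\infty$ one has $\phi_\kappa^-(x_n,t_n)=\delta_0$, and when $x_n$ is bounded the uniform positive lower bound applies). If the supremum $\theta^*$ is attained at an interior point, then $w$ has an interior global minimum equal to $0$ there, where $\partial_t w=\partial_x w=0$ and $\partial_{xx}w\ge0$, making the left-hand side $\le0$ while $g>0$ — a contradiction. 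Otherwise I translate: set $\Phi^*_n(x,t)=\Phi^*(x+x_n,t+t_n)$, $\tilde\Phi_n(x,t)=\tilde\Phi(x+x_n,t+t_n)$, pass to a subsequence along which the translated coefficients $a(\cdot+t_n),b(\cdot+t_n)$ and $\psi,\partial_x\psi$ converge locally uniformly (Arzel\`a--Ascoli via the uniform H\"older bounds of {\bf (H)} and of $\tilde{\mathcal{E}}_{\kappa,\beta_0,M}$) and use interior parabolic estimates to obtain limits $\Phi^*_\infty,\tilde\Phi_\infty$ solving a limiting equation of the same form, with $w_\infty=\theta^*\tilde\Phi_\infty-\Phi^*_\infty\ge0$, $w_\infty(0,0)=0$, and $\tilde\Phi_\infty(0,0)\ge\delta_0>0$. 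Then $w_\infty$ has an interior global minimum $0$ at $(0,0)$ while its right-hand side satisfies $g_\infty(0,0)=\theta^*(\theta^*-1)(b_\infty-\chi\mu)\tilde\Phi_\infty(0,0)^2>0$, the same contradiction. Hence $\theta^*=1$, so $\Phi^*\le\tilde\Phi$, which with $\tilde\Phi\le\Phi^*$ gives $\tilde\Phi\equiv\Phi^*$.

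I expect the main obstacle to be the non-compactness of $\R\times\R$: the maximizing sequence for the ratio may escape to spatial or temporal infinity, and since $a(t),b(t)$ are only uniformly H\"older (not periodic) one cannot simply evaluate at a genuine maximum. The remedy is exactly the translation-plus-compactness argument above, anchored by two structural facts: the \emph{common exponential tail} of the two solutions rules out escape to $x=+\infty$ (the only direction in which the limiting profile could degenerate to $0$), and the sub-solution lower bound $\phi_\kappa^-$ keeps $\tilde\Phi_\infty$ strictly positive at the touching point, so that $g_\infty(0,0)>0$ and the strong maximum principle / minimum-point test closes the argument.
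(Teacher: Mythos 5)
Your proof is correct, but it follows a genuinely different route from the paper's. The paper never uses maximality of $\Phi(\cdot,\cdot;\phi)$: it takes two arbitrary solutions $U_1,U_2$ in $\tilde{\mathcal{E}}_{\kappa,\beta_0,M}$, defines the symmetric ratio constant $\alpha=\inf\{\tilde\alpha\ge 1:\ \tilde\alpha^{-1}\le U_1/U_2\le\tilde\alpha\}$, and, assuming $\alpha>1$, runs a quantitative evolution argument: it first proves a gap estimate $\inf_{t_0\in\R}\min_{1/2\le t\le 1}\big(\alpha U_i(\bar x_\tau,t+t_0)-\Phi(\bar x_\tau,t+t_0;t_0,\alpha U_i,\phi)\big)>0$ (by the same translation-plus-compactness device you use), then compares $e^{\tilde\delta t}\Phi(\cdot,t+t_0;t_0,\alpha U_i,\phi)$ with $\alpha U_i$ on $\{x\le\bar x_\tau\}$ for a carefully chosen small $\tilde\delta$, glues this with the tail region $\{x\ge\bar x_\tau\}$ where the ratio is at most $1+\tau$, and concludes that the ratio constant can be improved to $\alpha e^{-\tilde\delta}$, contradicting the minimality of $\alpha$. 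You instead (i) exploit the monotone-limit construction \eqref{Main-U-def} together with the comparison principle to see that $\Phi^*=\Phi(\cdot,\cdot;\phi)$ is the maximal solution in the class, reducing everything to the one-sided bound $\Phi^*\le\theta^*\tilde\Phi$, and (ii) obtain the contradiction pointwise: you linearize the difference $w=\theta^*\tilde\Phi-\Phi^*$, note that a maximizing sequence for the ratio cannot escape to $x=+\infty$ because of the common exponential tail, translate to produce a touching point $w_\infty(0,0)=0$ of a nonnegative solution of a linear parabolic equation with forcing $g_\infty(0,0)=\theta^*(\theta^*-1)(b_\infty(0)-\chi\mu)\tilde\Phi_\infty(0,0)^2>0$, and close with the first/second-derivative test at the interior minimum. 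Both arguments rest on the same structural facts (the KPP sub-homogeneity $\mathcal{L}_{\kappa,\phi}(\theta u)=\theta(\theta-1)(b(t)-\chi\mu)u^2\ge 0$ for $\theta\ge1$, the squeeze $\phi_\kappa^-\le\cdot\le\phi_\kappa^+$, and translation plus interior parabolic estimates plus Arzel\`a--Ascoli), but your touching-point scheme avoids the paper's bookkeeping (the parameters $\tau$ and $\tilde\delta$, the two-region gluing, and the uniform-in-$t_0$ gap claim), at the price of invoking the special construction of $\Phi(\cdot,\cdot;\phi)$; the paper's scheme proves uniqueness intrinsically within the class, without knowing that one solution is a monotone limit. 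One cosmetic point: when the maximizing sequence satisfies $x_n\le X_0$, the correct uniform lower bound is $\tilde\Phi(x_n,t_n)\ge\inf\{\phi_\kappa^-(x,t):x\le X_0,\ t\in\R\}>0$, which need not equal $\delta_0$, but positivity is all your argument requires.
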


\begin{proof}
We apply the similar arguments as those  in \cite[Lemma 3.6]{SaSh2} to prove this lemma.

Let $\phi\in \mathcal{\tilde E}_{\kappa,\beta_0,M}$ be fixed and $U_1(x,t), U_2(x,t)$ be solutions of \eqref{Main U-eq} in $ \mathcal{\tilde E}_{\kappa,\beta_0,M}$. Let $\t>0$ be given. Since $\phi_{\kappa}^-(x,t)\leq U_{i}(x,t)\leq \phi_{\kappa}^+(x)$ for every $x\in\R$ and $t\in\R$, and $\lim_{x\to\infty}\frac{\phi_{\kappa}^+(x)}{\phi_{\kappa}^-(x,t)}=1$ uniformly in $t\in\R$, then there is $\bar{x}_{\tau}\gg 1$ such that
\begin{equation}\label{l-0}
\frac{1}{1+\t}\leq \frac{U_1(x,t)}{U_2(x,t)}\leq 1+\t, \quad \forall\ x\geq \bar{x}_{\t}, \ \forall\ t\in\R.
\end{equation}
Note that $\inf\{\phi_{\kappa}^{-}(x,t) \,| \, x\leq R , \, t\in\R\}>0$ for every $R\in\R$ and $\phi_{\kappa}^+(x)\leq \frac{a_{\sup}}{b_{\inf}-\chi\mu}$. This together with \eqref{l-0} and the fact that $\phi_{\kappa}^{-}(x,t)\leq U_i(x,t)\leq \phi_{\kappa}^{+}(x)$ for every $(x,t)\in\R\times\R$ imply that  there is $\alpha\in\R$ such that
\begin{equation}\label{l-00}
\alpha:=\inf\{\tilde{\alpha}\,| \, \tilde{\alpha}\geq 1 \ \text{and}\ \frac{1}{\tilde{\alpha}}\leq \frac{U_1(x,t)}{U_{2}(x,t)}\leq \tilde{\alpha} , \quad \forall\ x\in\R, \ \forall\ t\in\R\}.
\end{equation}
It is clear $\alpha\ge 1$, and to prove the lemma, it suffices to prove that $\alpha=1$. We prove this in the following by contradiction.

Assume that $\alpha>1$. It is clear from the definition of $\alpha$ that
\begin{equation}\label{l-01}
\begin{cases}
U_{1}(x,t)\leq \alpha U_{2}(x,t), \quad \forall\ x\in\R, \ t\in\R, \quad \text{and}\cr
U_{2}(x,t)\leq \alpha U_{1}(x,t), \quad \forall\ x\in\R, \ t\in\R.
\end{cases}
\end{equation}
 Let $\tau\in (0, \alpha-1)$ and $\bar{x}_{\tau}$ be given by \eqref{l-0}.
Note that
$$
U_i(x,t)=\Phi(x,t;t_0,U_i(\cdot,t_0),\phi),\quad \forall \ x\in\R, \ \forall\ t\geq t_0,
$$
where $\Phi(x,t;t_0,U_i,\phi)$ is given by \eqref{Un def} for each $i=1,2$.  Since $ \alpha>1$ and $U_i(x,t)>0$ for every $(x,t)\in\R\times\R$, then

\begin{equation}\label{l-002-0}
\mathcal{L}_{\kappa,\phi}(\alpha U_i)= (\alpha-1)(b(t)-\chi\mu)U_i(\alpha U_i)>0, \quad \forall\, x\in\R,\,\, t\in\R, \ \, i=1,2.
\end{equation}
Hence, since $\alpha>1$, comparison principle for parabolic equations and \eqref{l-01} imply that
\begin{equation}\label{l-002-1}
\begin{cases}
U_{1}(x,t+t_0)<\Phi(x,t+t_0;t_0,\alpha U_2,\phi)<\alpha U_{2}(x,t+t_0), \quad \forall\ x\in\R,\,\, \forall\, t>0, \,\,\forall\, t_0\in\R\cr
U_{2}(x,t+t_0)<\Phi(x,t+t_0;t_0,\alpha U_1,\phi)<\alpha U_{1}(x,t+t_0), \quad \forall\ x\in\R,\,\, \forall\, t>0, \,\,\forall\, t_0\in\R.
\end{cases}
\end{equation}

Note that \eqref{l-002-1} implies that  $$  \min_{\frac{1}{2}\leq t\leq 1}(\alpha U_{i}(\bar{x}_{\tau},t+t_0)-\Phi(\bar{x}_{\tau},t+t_0;t_0,\alpha U_{i},\phi))>0, \quad \forall\, t_0\in\R,\quad i=1,2.$$  We claim that for each $i=1,2$, we have that

 \begin{equation}\label{a-claim-1} \inf_{t_0\in\R}\min_{\frac{1}{2}\leq t\leq 1}(\alpha U_{i}(\bar{x}_{\tau},t+t_0)-\Phi(\bar{x}_{\tau},t+t_0;t_0,\alpha U_{i},\phi))>0.
\end{equation}
 Assume that \eqref{a-claim-1}  does not hold. Without loss of generality, we may suppose that there are sequences $\{t_{0n}\}_{n\geq 1} \subset \R$ and $\{t_n\}_{n\geq 1}\subset [\frac{1}{2}, 1]$ such that  \begin{equation}\label{a-eq-0-1-0} \lim_{n\to\infty}\alpha U_{1}(\bar{x}_{\tau},t_n+t_{0n})-\Phi(\bar{x}_{\tau},t_n+t_{0n};t_{0n},\alpha U_{1},\phi)=0.\end{equation} For each $n\geq 1$, let $u_{n}(x,t)=U_1(x+\tilde{x}_{\tau},t+t_n+t_{0n})$ and $\tilde{u}_{n}(x,t)=\Phi(x+\tilde{x}_{\tau},t+t_n+t_{0n};t_{0n},\alpha U_1,\phi)$ for $x\in\R$ and $t\ge-\frac{1}{2}$. Hence, by a priori estimates for parabolic equations and Arzela-Ascoli, we may suppose that there exist $u^*(x,t), \tilde{u}^*(x,t)\in C^{2,1}(\R\times[-\frac{1}{2},\infty))$, $\phi^*\in C^{\delta}(\R\times\R)$, and $a^*(t),b^*(t)\in C^{\delta}(\R)$ such that
  $$  (u_{n}(x,t),\tilde{u}_{n}(x,t),\phi(t+t_n+t_{0n}))\to (u^*(x,t),\tilde{u}^*(x,t),\phi^*(x,t)) $$
  and
  $$  (a(t+t_n+t_{0n}),b(t+t_n+t_{0n}))\to (a^*(t),b^*(t)) $$
  as $ n\to\infty$ in the compact-open topology.
 Moreover, $u^*(x,t)$  and $\tilde{u}^*(x,t)$ solve  the PDE
 \begin{equation}\label{a-eq-0-1-00}
 u_t=u_{xx} +(c^*(t)-\chi\partial_x\psi(\cdot,\cdot;\phi^*))u_{x}+ (a^*(t)-\chi\lambda\psi(\cdot,\cdot;\phi^*)-(b^*(t)-\chi\mu)u)u, \,\,  x\in\R,\,\, t\ge -\frac{1}{2}
 \end{equation}
 with $c^*(t)=\frac{\kappa^2+a^*(t)}{\kappa}$.
  Observe that
  $$
  0<\inf_{s\in\R}\phi_{\kappa}^{-}(x,s)\leq \min\{u^*(x,t),\tilde{u}^*(x,t)\}, \quad \forall\,   x\ll -1,\ t\ge -\frac{1}{2}, $$ and
  that
 \begin{equation*}
 \tilde{u}^*(x,t)\leq \alpha u^*(x,t), \quad \forall\, x\in\R,\,\, t\ge-\frac{1}{2}.
 \end{equation*}
 Then $0<u^*(x,t)$ for all $x\in\R$ and $t>\frac{1}{2}$. Multiplying \eqref{a-eq-0-1-00} by $\alpha$, we obtain that
  $$
  (\alpha u^*)_t>u_{xx} +(c^*(t)-\chi\partial_x\psi(\cdot,\cdot))(\alpha u^*)_{x}+ (a^*(t)-\chi\lambda\psi(\cdot,\cdot;\phi^*)-(b^*(t)-\chi\mu)(\alpha u^*))(\alpha u^*).
  $$
  Hence, comparison principle for parabolic equations yields that
  $$  u^*(x,t)<\alpha \tilde{u}^*(x,t),\quad \forall\ x\in\R,\ \forall\ t>-\frac{1}{2}. $$
  In particular
   \begin{equation}\label{a-eq-0-1-2}
   u^*(0,0)<\alpha \tilde{u}^*(0,0).
   \end{equation}
   On the other hand, \eqref{a-eq-0-1-0} implies that  $$ u^*(0,0)=\alpha\tilde{u}^*(0,0),$$
    which contradicts to \eqref{a-eq-0-1-2}.  Thus \eqref{a-claim-1} holds.

 We note  from \eqref{l-002-0} that  \begin{equation}\label{l-002} \begin{split}\mathcal{L}_{\kappa,\phi}(\alpha U_i)=& (\alpha-1)(b(t)-\chi\mu)U_i(\alpha U_i)\\\geq& (\alpha-1)(b_{\inf}-\chi\mu)m_{\tau}(\alpha U_i),\quad \forall\ x\leq \bar{x}_{\tau},\ \forall\ t\in\R.\end{split}\end{equation}where $m_\tau=\inf\{\phi_{\kappa}^-(x,t)\ :\ x\leq \bar{x}_{\tau},\,\, t\in\R\}>0$.


Let $0<\tilde{\delta}\ll 1$  such that
 $$
 \begin{cases}  \tilde{\delta}+(b_{\sup}-\chi\mu)(1-e^{-\tilde{\delta}})e^{\tilde{\delta}}\frac{\alpha a_{\sup}}{b_{\inf}-\chi\mu}\leq (\alpha-1)(b_{\inf}-\chi\mu)m_{\tau},\cr \inf_{t_0\in\R}\min_{\frac{1}{2}\leq t\leq 1}(\alpha U_{i}(\bar{x}_{\tau},t+t_0)-\Phi(\bar{x}_{\tau},t+t_0;t_0,\alpha U_{i},\phi))>\frac{(e^{\tilde{\delta}}-1)\alpha a_{\sup}}{b_{\inf}-\chi\mu}\cr
 (1+\tau)e^{\tilde{\delta}}<\alpha .
  \end{cases}
   $$
Let  $t_0\in\R$ be given. For every $t\ge 0$ and $i=1,2$,  let $W_{i}(x,t)=e^{\tilde{\delta} t}\Phi(x,t+t_0;t_0,\alpha U_i(\cdot,t_0),\phi)$.
 We have
 \begin{equation}\label{l-003} \begin{split} \mathcal{L}_{\kappa,\phi}(W_{i})(x,t)\leq & \Big(\tilde{\delta}+(b(t)-\chi\mu)(1-e^{-\tilde{\delta} t})e^{\tilde{\delta} t}W_{i}(x,t)\Big)W_i(x,t)\\ \leq & \Big(\tilde{\delta}+(b_{\sup}-\chi\mu)(1-e^{-\tilde{\delta} t})e^{\tilde{\delta} t}\frac{\alpha a_{\sup}}{b_{\inf}-\chi\mu}\Big)W_{i}(x,t)\\ \leq & (\alpha-1)(b_{\inf}-\chi\mu)m_{\tau}W_{i}(x,t),\ \quad \forall\,  0\leq t\leq 1. \end{split}
 \end{equation}
 Since $\Phi(x,t+t_0;t_0,\alpha U_i,\phi)\leq \alpha U_i(x,t+t_0)\leq \frac{\alpha a_{\sup}}{b_{\inf}-\chi\mu}$ for every $t\geq 0, t_0\in\R$ and $x\in\R$, then
 $$
\frac{(e^{\tilde{\delta}}-1)\alpha a_{\sup}}{b_{\inf}-\chi\mu} +\Phi(x,t+t_0;t_0,\alpha U_i,\phi)\geq e^{\tilde{\delta}t}\Phi(x,t+t_0;t_0,\alpha U_i,\phi),\quad \forall\ x\in\R,\ t_0\in\R, \ 0\leq t\leq 1.
 $$
 Therefore, it follows from the choice of $\tilde{\delta}$ that  $$  W_i(x,t)=e^{\tilde{\delta} t}\Phi(\tilde{x}_{\tau},t+t_0;t_0,\alpha U_i,\phi)\leq \alpha U_i(\tilde{x}_{\tau},t+t_0), \quad\,\,\,\, \forall\, t\in[\frac{1}{2}, 1], \,\,\forall\, t_0\in\R. $$
 Combine this with \eqref{l-002}, \eqref{l-003}, the comparison principle for parabolic equations yield that
 $$
  e^{\tilde{\delta} t}\Phi(x,t+t_0;t_0;\alpha U_i,\phi)\leq \alpha U_i(x,t+t_0),  \quad\, \forall\, x\le \tilde{x}_\tau,\,\, \forall\, t\in[\frac{1}{2}, 1], \,\,\forall\, t_0\in\R.
  $$
  This combined with \eqref{l-002-1} give
   $$
    U_{1}(x,t)\leq \alpha e^{-\tilde{\delta}} U_{2}(x,t) \quad \text{and}
    \quad  U_{2}(x,t)\leq \alpha e^{-\tilde{\delta}} U_{1}(x,t), \quad x\le \tilde{x}_\tau,\,\, t\in\R.
    $$
    Which is together with \eqref{l-0} and the fact that $(1+\tau)e^{\tilde{\delta}}\leq \alpha$  yield that $$  \frac{1}{\alpha e^{-\tilde{\delta}}}\leq \frac{U_1(x,t)}{U_{2}(x,t)}\leq \alpha e^{-\tilde{\delta}}, \,\,\, \forall\ t \in\R, \forall\ x\in\R. $$  Hence by definition of $\alpha$, we must have  $\alpha\leq \alpha e^{-\tilde{\delta}}$, which is impossible. Thus $\alpha =1$ and the lemma is proved.
\end{proof}

Now we present the proof of Theorem \ref{Main-tm1}

\begin{proof} [Proof of Theorem \ref{Main-tm1}(1)]

For any given  $ 0<\underline{c}<{c}_\chi^*$, let $0<\kappa< \kappa_{\chi}^*$ be such that $\underline{c}=\frac{\underline{a}+\kappa^2}{\kappa}$.
  We prove that \eqref{P0} has a transition front solution $(u(x,t),v(x,t))=(U(x-C_\kappa(t),t),V(x-C_{\kappa}(t),t))$ with $C_\kappa(t)=\int_0^tc_{\kappa}(s)ds$ and $c_{\kappa}(t)=\frac{a(t)+\kappa^2}{\kappa}$.  To prove this, we prove
 the following claim.

 \medskip

 \noindent {\bf Claim 1.}{\it  The mapping $\Phi:  \mathcal{\tilde E}_{\kappa,\beta_0,M} \to   \mathcal{\tilde E}_{\kappa,\beta_0,M}$, $\phi\mapsto \Phi(\cdot,\cdot,\phi)$,  has a fixed point $\phi^*$ satisfying $\Phi(\cdot,\cdot,\phi^*)=\phi^*$.
Furthermore, any element  $\phi^*\in \mathcal{\tilde E}_{\kappa,\beta_0,M}$  with the property $\Phi(\cdot,\cdot,\phi^*)=\phi^*$ also satisfies
\begin{equation*}
\lim_{x\to-\infty}|\phi^*(x,t)-u^*(t)|=0 \quad \text{and}\quad \lim_{x\to\infty}\frac{\phi^*(x,t)}{e^{-\kappa x}}=1, \quad \text{uniformly in }\ t,
\end{equation*}
where $u^{*}(t)$ is the only positive entire solution of \eqref{kpp-eq}.}

\medskip

Assume that the claim holds.  Let $U(x,t)=\phi^*(x,t)$ and $V(x,t)=\Psi(x,t;U)$. Then $$(u(x,t),v(x,t))=(U(x-C_\kappa(t),t),V(x-C_{\kappa}(t),t))$$
  where $C_\kappa(t)=\int_0^tc_{\kappa}(s)ds$ and $c_{\kappa}(t)=\frac{a(t)+\kappa^2}{\kappa}$,  is a transition front solution of\eqref{P0} connecting $(0,0)$ and $(u^*(t),v^*(t))$ satisfying \eqref{TW-eq}.
In the following, we prove that the claim holds.

We first prove that the mapping $\Phi$ has a fixed point. To this end,
let $C^b_{\rm unif}(\R\times\R)$ be endowed with the compact-open topology, that is, a sequence of elements of $C^b_{\rm unif}(\R\times\R)$ converges if and only it converges  uniformly on every compact subsets of $\R\times\R$. It is clear that $\mathcal{\tilde E}_{\kappa,\beta_0,M}$ is a compact and convex subset of  $C^b_{\rm unif}(\R\times\R)$ in the compact-open topology.
{ To show that the mapping $\Phi$ has a fixed point, it is then enough to show that $\Phi$ is continuous. We show this in the following.}

 Let $\{\phi_{n}\}_{n\geq 1}$ and $\phi$ be elements of $\mathcal{\tilde E}_{\kappa,\beta_0,M}$ such that $\phi_{n}(x,t)\to \phi(x,t)$ as $n\to\infty$ uniformly on every compact subsets of $\R\times\R$. Since $\mathcal{\tilde E}_{\kappa,\beta_0,M}$ is compact, there is
 a subsequence $\{n_k\}$ of $\{n\}$ and a function $\Phi(\cdot,\cdot)\in \mathcal{\tilde E}_{\kappa,\beta_0,M}$
  such that $\Phi(x,t;\phi_{n_k})\to \Phi(x,t)$ as $n_k\to\infty$ on every compact subset of $\R\times\R$. Since $\phi_{n_k}(x,t)\to \phi(x,t)$  as $n_k\to\infty$ on very compact subset of $\R\times\R$, it follows from Lemma \ref{pointwise-estimate on space derivative of v} (ii) that
$$
\psi(x,t;\phi_{n_k})\to \psi(x,t;\phi) \quad \text{and}\quad \partial_x\psi(x,t;\phi_{n_2})\to \partial_x \psi(x,t;\phi)\quad \text{as }\ n_k\to\infty,
$$
uniformly on every compact subset of $\R\times\R$, where $\psi(\cdot,\cdot;\phi)$ is given by \eqref{space-derivatie of v}. Therefore, it follows from \cite[Chapter 3 Theorem 3]{Fri} that  $\Phi(x,t)\in C^{2,1}(\R\times\R)$ and satisfies
$$
\Phi_{t}=\Phi_{xx}+(c_{\kappa}(t)-\psi_{x}(x,t;\phi))\Phi_{x}+(a(t)-\chi\lambda\psi(x,t;\phi)-(b(t)-\chi\mu)\Phi)\Phi, \  x\in\R, \ t\in\R.
$$
Therefore, it follows from Lemma \ref{uniqueness-lem} that $\Phi(x,t)=\Phi(x,t;\phi)$ for every $(x,t)\in\R\times\R$.
Hence the mapping $\Phi$ is continuous, and by Schauder's fixed point theorem, there is $\phi^*\in\mathcal{\tilde E}_{\kappa,\beta_0,M}$ such that
$$
\Phi(x,t;\phi^*)=\phi^*(x,t)\,\, \,  \forall\ (x,t)\in\R\times\R.
$$

Next, we prove \eqref{TW-eq2}.
 Suppose that  $\phi^*=\Phi(\cdot,\cdot,\phi^*)$.
Since
$$
\lim_{x\to\infty}\frac{\phi_{\kappa}^+(x)}{\phi_{\kappa}^-(x,t)}=1,
$$
uniformly in $t\in\R$ and $\phi_{\kappa}^-(x,t)\leq \phi^*(x,t)\leq \phi^+_{\kappa}(x)$, for every $x\in\R$ and $t\in\R$, we get that
$$
\lim_{x\to\infty}\frac{\phi^*(x,t)}{e^{-\kappa x}}=1, \quad \text{uniformly in t.}
$$
It remains to show that
\begin{equation}\label{z--02}
\lim_{x\to-\infty}|\phi^*(x,t)-u^*(t)|=0, \quad \text{uniformly in} \ t,
\end{equation}
where $u^*(t)$ is given by \eqref{kpp-eq}.

Suppose by contradiction that \eqref{z--02} does not hold.
Then, there exist a sequence of real numbers $\{x_n\}_{n\geq 1}$ with $\lim_{n\to}x_{n}=-\infty$, a sequence of real numbers $\{t_n\}_{n\geq 1}$, and a positive real number $\delta>0$ such that
\begin{equation}\label{z--03}
\delta\leq \inf_{n\geq 1}|\phi^*(x_n,t_n)-u^*(t_n)|.
\end{equation}
Let $\psi^*(x,t)=\psi(x,t;\phi^{*})$. Hence $(u(x,t),v(x,t)):=(\phi^*(x-\int_0^tc_\kappa(s)ds,t),\psi^*(x-\int_0^tc_\kappa(s)ds,t))$ solves \eqref{P0}.

For each $n\geq 1$, $x\in\R$, and $t\in\R$, let
$$
u_{n}(x,t)=u(x+x_n+\int_0^{t_n}c_{\kappa}(s)ds,t+t_n), \quad u^*_n(t)=u^*(t+t_n),\quad a_n(t)=a(t+t_n),\quad b_n(t)=b(t+t_n),
$$
and $v_n=\mu(\lambda I-\Delta)^{-1}u_n$.  Note $(u_n,v_n)$ solves
\begin{equation*}
\begin{cases}
\p_t u_n=\p_{xx}u_n-\chi\p_x(u_n\p_xv_n)+(a_n(t)-b_n(t)u_n)u_n, \quad x\in\R,\ t\in\R\cr
0=\p_{xx}v_n-\lambda v_n +\mu u_n, \quad x\in\R,\ t\in\R,
\end{cases}
\end{equation*}
and
\begin{equation*}
\frac{d}{dt}u^*_n=(a_n(t)-b_n(t)u_n)u_n, \quad t\in\R.
\end{equation*}
Hence, up to a subsequence, using similar arguments as those used in the proof of Lemma \ref{U-eq lem}, we may assume that there exist $(\tilde{u},\tilde{v})\in C^{2,1}(\R\times\R)$, $\tilde{u}^*\in C^{1}(\R)$, $\tilde{a},\tilde{b}\in C^{\nu_0}_{\rm unif}(\R)$ such that $(u_n,v_n)\to (\tilde{u},\tilde{v})$, $u^*_n\to\tilde{u}^*$, $a_n\to\tilde{a}$, and $b_n\to\tilde{b}$ as $n\to \infty$ locally uniformly. Furthermore, it holds that
\begin{equation}\label{z--00}
\begin{cases}
\p_t\tilde{u}=\p_{xx}\tilde{u}-\chi\p_x(\tilde u\p_x\tilde v)+(\tilde a(t)-\tilde b(t)\tilde u)\tilde u, \quad x\in\R,\ t\in\R\cr
0=\p_{xx}\tilde v-\lambda\tilde v +\mu\tilde u, \quad x\in\R,\ t\in\R,
\end{cases}
\end{equation}
and
\begin{equation}\label{z--01}
\frac{d}{dt}\tilde{u}^*=(\tilde{a}(t)-\tilde{b}(t)\tilde{u}^*)\tilde{u}^*, \quad t\in\R.
\end{equation}
It is clear that $0<\inf_{t\in\R}\tilde{u}^*(t)\leq \sup_{t\in\R}\tilde{u}^*(t)<\infty$. Using the fact that $\inf_{t\in\R}x_{\kappa,\varepsilon,d,A}(t;\delta_0)\geq \inf_{t\in\R}x_{\kappa,\varepsilon,d,A}^-(t)>0$, it follows from
 \eqref{time dependent lower - solution} that
\begin{equation*}
\begin{split}
 u_n(x,t)=&\phi^*(x+x_n-\int_{t_n}^{t_n+t}c_{\kappa}(s)ds,t+t_n)\\
 \geq &  \phi_{\kappa}^{-}(x+x_n-\int_{t_n}^{t_n+t}c_{\kappa}(s)ds,t+t_n )\\
 \geq & \delta_0
\end{split}
\end{equation*}
whenever $ x+x_n-\int_{t_n}^{t_n+t}c_{\kappa}(s)ds\leq 0$ . Since $x_{n}\to-\infty$ as $n\to\infty$, and $\|c_{\kappa}\|_{\infty}\leq \frac{a_{\sup}+\kappa^2}{\kappa}$, then
$$
0<\delta_0\leq \tilde{u}(x,t)\leq \frac{a_{\sup}}{b_{\inf}-\chi\mu}, \quad \forall x\in\R,\ \forall\ t\in\R.
$$
Hence $(\tilde{u},\tilde{v})$ is a positive entire solution of \eqref{z--00}.  But, \cite[Theorem 1.4]{SaSh_6_II} implies that $(\tilde{u}^*,\tilde{v}^*)$, where $\tilde{v}^*=\mu(\lambda I -\Delta )^{-1}\tilde{u}^*$, is the only strictly positive entire solution \eqref{z--00}, when $0<\mu\chi\leq \frac{b_{inf}}{2}$. Thus we must have
\begin{equation}\label{z--4}
\tilde{u}(x,t)=\tilde{u}^*(t), \quad \forall\ x\in\R, \ \forall\ t\in\R.
\end{equation}
But inequality \eqref{z--03} gives that
$$
0<\delta\leq |\tilde{u}(0,0)-\tilde{u}^*(0)|.
$$
This contradicts to \eqref{z--4}. Therefore, Claim 1 holds and Theorem \ref{Main-tm1}(1) is thus proved.
\end{proof}

 \begin{proof} [Proof of Theorem \ref{Main-tm1}(2)]
 For given $c>c_\chi^*$, let $\kappa\in(0, \min\{\kappa_\chi,\sqrt{\hat{a}}\})$ $(\hat a=\frac{1}{T}\int_0^T a(t)dt$)  be such that $c=\frac{\hat{a}+\kappa^2}{\kappa}$.
To prove Theorem \ref{Main-tm1}(2), we first change the set $\mathcal{E}_{\kappa,\beta_0,M}$ to
 \begin{align}
\label{def-of-set-e-1}
\mathcal{E}^T_{\kappa,\beta_0,M}=\{ \phi\in C(\R, C_{\rm unif}^b(\R))\,|& \,  0\leq \phi(x,t)\leq \phi^+_{\kappa}(x), \,\, \phi(x,t+T)=\phi(x,t)\nonumber\\
& |\phi(x+h,t)-\phi(x,t)|\le M |h|^{\beta_0},\,\, {\rm and}\nonumber\\
& |\phi(x,t+h)-\phi(x,t)|\le M |h|^{\beta_0} \,\, \forall\, x,t,h\in\R,\, |h|\le 1\},
\end{align}
and change $\tilde{\mathcal{E}}_{\kappa,\beta_0,M}$ to
\begin{equation}\label{E-kappa-1}
\tilde{\mathcal{E}}^T_{\kappa,\beta_0,M}=\{\phi\in\mathcal{E}^1_{\kappa,\beta_0,M}\,|\, \phi_{\kappa}^-(x,t)\leq \phi(x,t)\leq \phi_{\kappa}^+(x), \quad \forall\ x\in\R, \ \forall\ t\in\R\}.
\end{equation}
Note that by the periodicity of $a(t)$ and $b(t)$, $A(t)$ in \eqref{A0 eq} can be chosen to be periodic in $t$ with period $T$ and then
 $\phi_{\kappa}^{-}(x,t)$  is periodic in $t$ with period $T$.

 Next,   note that for any $\phi\in \tilde{\mathcal{E}}^T_{\kappa,\beta_0,M}$ and any $t_1>t_2$,
 $$0\leq \Phi(x,t;t_2,\phi_{\kappa}^+,\phi)\leq \Phi(x,t;t_1,\phi_{\kappa}^+,\phi)\leq \phi_{\kappa}^+(x)$$
  for every $x\in\R$ and $ t\geq t_1$. Let
  $$
  \Phi(x,t;\phi)=\lim_{t_0\to\infty} \Phi(x,t;t_0,\phi_{\kappa}^+,\phi)=\lim_{n\to\infty}\Phi(x,t;-nT,\phi_\kappa^+,\phi).
  $$
By the periodicity of $a(t)$ and $b(t)$, we have
\begin{align*}
\Phi(x,t;\phi)&=\lim_{n\to\infty}\Phi(x,t;-nT,\phi_\kappa^+,\phi)\\
&=\lim_{n\to\infty} \Phi(x,t+nT;0,\phi_{\kappa}^+,\phi)\\
&=\lim_{n\to\infty}\Phi(x,t+(n+1)T;0,\phi_\kappa^+,\phi)\\
&=\Phi(x,t+T;\phi).
\end{align*}
Hence $\Phi(\cdot,\cdot;\phi)\in \tilde{\mathcal{E}}^T_{\kappa,\beta_0,M}$.

Now, by the similar arguments as in Claim 1, we can prove the following claim.

\medskip

 \noindent {\bf Claim 2.}{\it  The mapping $\Phi:  \mathcal{\tilde E}^T_{\kappa,\beta_0,M} \to   \mathcal{\tilde E}^T_{\kappa,\beta_0,M}$, $\phi\mapsto \Phi(\cdot,\cdot,\phi)$,  has a fixed point $\phi^*$ satisfying $\Phi(\cdot,\cdot,\phi^*)=\phi^*$.
Furthermore, any element  $\phi^*\in \mathcal{\tilde E}_{\kappa,\beta_0,M}$  with the property $\Phi(\cdot,\cdot,\phi^*)=\phi^*$ also satisfies
\begin{equation*}
\lim_{x\to-\infty}|\phi^*(x,t)-u^*(t)|=0 \quad \text{and}\quad \lim_{x\to\infty}\frac{\phi^*(x,t)}{e^{-\kappa x}}=1, \quad \text{uniformly in }\ t,
\end{equation*}
where $u^{*}(t)$ is the only positive entire solution of \eqref{kpp-eq}.}

\medskip

Finally, let  $\tilde U(x,t)=\phi^*(x,t)$ and $\tilde V(x,t)=\Psi(x,t;\tilde U)$. Then $$(u(x,t),v(x,t))=(\tilde U(x-C_\kappa(t),t),\tilde V(x-C_{\kappa}(t),t))$$
  where $C_\kappa(t)=\int_0^tc_{\kappa}(s)ds$ and $c_{\kappa}(t)=\frac{a(t)+\kappa^2}{\kappa}$,  is a transition front solution of\eqref{P0} connecting $(0,0)$ and $(u^*(t),v^*(t))$ satisfying \eqref{TW-eq} and \eqref{TW-eq1}.
Let
$$
U(x,t)=\tilde U(x+ct-\int_0^t c_k(s)ds,t)\quad {\rm and}\quad V(x,t)=\tilde V(x+ct-\int_0^t c_k(s)ds,t).
$$
Note that the function $\R\ni t\mapsto ct -\int_0^tc_{\kappa}(s)ds$ is periodic with period $T$.  Then
$$
(u(x,t),v(x,t))=(U(x-ct,t),V(x-ct,t))
$$
 is a (periodic) transition front solution of\eqref{P0} connecting $(0,0)$ and $(u^*(t),v^*(t))$ satisfying \eqref{TW-eq1} and \eqref{TW-eq2}.
\end{proof}

\begin{proof} [Proof of Theorem \ref{Main-tm1}(3)]
Suppose that $a(t)\equiv a$ and $b(t)\equiv b$ are independent of $t$. For every $c> {c}^*_\chi$,  let $\kappa\in(0, \min\{\kappa_\chi,\sqrt{{a}}\})$ be such that $c=\frac{{a}+\kappa^2}{\kappa}$.
Similarly, to prove Theorem \ref{Main-tm1}(3), we  change the set $\mathcal{E}_{\kappa,\beta_0,M}$ to
 \begin{align}
\label{def-of-set-e-2}
\mathcal{E}^0_{\kappa,\beta_0,M}=\{ \phi\in C_{\rm unif}^b(\R)\,|& \,  0\leq \phi(x)\leq \phi^+_{\kappa}(x) \,\, {\rm and}\,\, |\phi(x+h)-\phi(x)|\le M |h|^{\beta_0}\,\,\forall\, x,t,h\in\R,\, |h|\le 1\},
\end{align}
and change $\tilde{\mathcal{E}}_{\kappa,\beta_0,M}$ to
\begin{equation}\label{E-kappa-2}
\tilde{\mathcal{E}}^0_{\kappa,\beta_0,M}=\{\phi\in\mathcal{E}^2_{\kappa,\beta_0,M}\,|\, \phi_{\kappa}^-(x,t)\leq \phi(x)\leq \phi_{\kappa}^+(x), \quad \forall\ x\in\R, \ \forall\ t\in\R\}.
\end{equation}
Note that  $A(t)$ in \eqref{A0 eq} can be chosen to be independent of $t$ and then
 $\phi_{\kappa}^{-}(x,t)$  is independent of $t$.

Note that, for any $T>0$,
$$
\tilde{\mathcal{E}}^0_{\kappa,\beta_0,M}\subset \tilde{\mathcal{E}}^T_{\kappa,\beta_0,M}
$$
and $\tilde{\mathcal{E}}^0_{\kappa,\beta_0,M}$ is a closed convex subset of $\tilde{\mathcal{E}}^T_{\kappa,\beta_0,M}$
with respect to the open compact topology.

 Note also that,  for any $\phi\in \tilde{\mathcal{E}}^0_{\kappa,\beta_0,M}$ and any $t_1>t_2$, we have
 $$0\leq \Phi(x,t;t_2,\phi_{\kappa}^+,\phi)\leq \Phi(x,t;t_1,\phi_{\kappa}^+,\phi)\leq \phi_{\kappa}^+(x)$$
  for every $x\in\R$ and $ t\geq t_1$. Let
  $$
  \Phi(x,t;\phi)=\lim_{t_0\to\infty} \Phi(x,t;t_0,\phi_{\kappa}^+,\phi).
  $$
We have
\begin{align*}
\Phi(x,t;\phi)&=\lim_{t_0\to -\infty}\Phi(x,t;t_0,\phi_\kappa^+,\phi)\\
&=\lim_{t_0\to -\infty} \Phi(x,t-t_0;0,\phi_{\kappa}^+,\phi)\\
&=\lim_{t_0\to -\infty}\Phi(x, -t_0;0,\phi_\kappa^+,\phi)\\
&=\Phi(x,0;\phi)\quad \forall\, t\in\R.
\end{align*}
Hence $\Phi(x,t;\phi)\equiv \Phi(x;\phi)$ and $\Phi(\cdot;\phi)\in \tilde{\mathcal{E}}^0_{\kappa,\beta_0,M}$.

Then  $\Phi$ in {\bf Claim 2} has a fixed point $\phi^*\in \tilde{\mathcal{E}}^0_{\kappa,\beta_0,M}$.
Let  $U(x,t)=\phi^*(x)$ and $V(x)=\Psi(x,t;\tilde U)$. Then $$(u(x,t),v(x,t))=( U(x-ct), V(x-ct))$$
    is a traveling wave solution of\eqref{P0} connecting $(0,0)$ and $(\frac{a}{b},\frac{\mu}{\lambda}\frac{a}{b})$ satisfying \eqref{TW-eq3}.
\end{proof}

\section{Nonexistence of transition front solutions}

In this section, we prove Theorem \ref{Main-tm2} on the nonexistence of transition front solutions of \eqref{P0}
with least mean speed small then $2\sqrt{\underbar{a}}$.

\begin{proof}[Proof of Theorem \ref{Main-tm2}]
Suppose that $(u(t,x),v(t,x))=(U(x-C(t),t),V(x-C(t),t))$ is a transition front solution of \eqref{P0}
connecting $(u^*(t),\frac{\mu}{\lambda}u^*(t))$ and $(0,0)$.
It suffices to prove that
\begin{equation}
\label{EE-0}
2\sqrt{\underline{a}}\leq \liminf_{t-s\to\infty}\frac{C(t)-C(s)}{t-s}.
\end{equation}
We prove this in five steps.

\smallskip

\noindent {\bf Step 1.} In this step, we get some pointwise estimate for $v(t,x)$ and
$v_x(t,x)$.

Note that
\begin{equation}\label{E0-1}
0<m_0:=\inf_{x\leq 0, t\in\R}u(t,x+C(t))=\inf_{x\le 0, t\in\R}U(x,t),
\end{equation}
and that
\begin{equation}
\label{E0-2}
0\leq u(t,x)\leq M:=\frac{a_{\sup}}{ b_{\inf}-\chi\mu},\quad  \forall\ t,x\in\R.
\end{equation}
Note also that for every $R\gg 1$, there is $C_R\gg 1$ and $\varepsilon_R>0$ such that
 \begin{equation}\label{E2}
|\chi v_x(t,\cdot)|_{L^{\infty}(B_{\frac{R}{2}}(x_0))} + |\chi\lambda v(t,\cdot)|_{L^{\infty}(B_{\frac{R}{2}}(x_0))}\leq C_{R}\|u(t,\cdot)\|_{L^{\infty}(B_{R}(x_0))}+\varepsilon_R M, \quad \forall\ x_0\in\R
\end{equation}
with $\lim_{R\to\infty}\varepsilon_R=0$.
By the arguments of \cite[Lemma 2.2]{HaHe},  for every $p>1$, $t_0>0$, $s_0\geq 0$ and $R>0$, there is $C_{t_0,s_0,R,M,p}$
\begin{equation}\label{E1}
u(t+t_{1},x)\le C_{t_0,s_0,R,M,p}[u(t+s+t_1,y)]^{\frac{1}{p}}(M+1),\quad \forall\ s\in[0,s_0], \ t\geq t_0, \ t_1\in\R, \ |x-y|\leq R.
\end{equation}
 By \eqref{E2} and \eqref{E1} with $p>1$, $s_0=0$ and $t_0=1$, we have
\begin{align}
\label{E2-1}
|\chi v_x(t,x)|+\chi \lambda v(t,x)&= |\chi v_x(t_0+t-t_0,x)|+\chi \lambda v(t_0+t-t_0,x)\nonumber\\
&\le C_{R,p} \big(u(t_0+t-t_0,x)\big)^{\frac{1}{p}}+\varepsilon_R M\nonumber\\
&= C_{R,p} \big(u(t,x)\big)^{\frac{1}{p}}+\varepsilon_R M \quad \forall\,\, t\in\R,\,\, x\in\R,
\end{align}
where  $C_{R,p}=C_R\cdot  C_{1,0,R,M,p}\cdot (M+1) (>0)$.

\smallskip

\noindent {\bf Step 2.} In this step, we construct super-solutions of some equation related to the first equation in \eqref{P}.

By \cite[Lemma 2.2]{SaSh_7_I}, for any
 $0<a_0<\underline{a}$, there is $A_{a_0}(t)\in  W^{1,\infty}_{\rm loc}(\R)\cap L^{\infty}(\R)$ such that
$$
a(t)+ A_{a_0}'(t)\geq a_0 ,\quad \forall\ t\in\R.
$$
Fix $0<a_0<\underline{a}$.
For every $\gamma\in\R$ and $s\in\R$, let
$$\tilde{u}^{s,\gamma}(t,x)=e^{A_{a_0}(t)}u(t,x-\gamma s+C(s))$$
 for  $t\geq s$ and $x\in\R$. Then
\begin{align*}
\tilde{u}^{s,\gamma}_t&=A_{a_0}'(t)\tilde{u}^{s,\gamma}+e^{A_{a_0}(t)}u^{s,\gamma}_t\cr
&=\tilde{u}^{s,\gamma}_{xx}-\chi v_{x}(t,x-\gamma s+C(s))\tilde{u}^{s,\gamma}_x \cr
&\,\,\, +\tilde{u}^{s,\gamma}(a(t)+ A_{a_0}'(t)-\chi\lambda v(t,x-\gamma s+C(s))-(b(t)-\chi\mu)u)\cr
&\ge  \tilde{u}^{s,\gamma}_{xx}-\chi v_{x}(t,x-\gamma s+C(s))\tilde{u}^{s,\gamma}_x \cr
&\,\,\, +\tilde{u}^{s,\gamma}(a_0-\chi\lambda v(t,x-\gamma s+C(s))-(b_{\sup}-\chi\mu)e^{\|A_{a_0}\|_{\infty}}\tilde{u}^{s,\gamma})\cr
&\ge  \tilde{u}^{s,\gamma}_{xx}-\chi v_{x}(t,x-\gamma s+C(s))\tilde{u}^{s,\gamma}_x \cr
&\,\,\, +\tilde{u}^{s,\gamma}(a_0-\chi \lambda \varepsilon_R M- \chi\lambda  C_{R,p} e^{\frac{1}{p}\|A_{a_0}\|_\infty} \big(\tilde u^{s,\gamma}\big)^{\frac{1}{p}}
 -(b_{\sup}-\chi\mu)e^{\|A_{a_0}\|_{\infty}}\tilde{u}^{s,\gamma}).
\end{align*}
Let $b_0=(b_{\sup}-\chi\mu)e^{\|A_{a_0}\|_{\infty}}$ and
\begin{align*}
\mathcal{A}_0(u)&=u_{xx}-\chi v_x(t,x-\gamma s +C(s))u_x\\
&\,\,\,  +u(a_0-\chi \lambda \varepsilon_R M- \chi\lambda C_{R,p} e^{\frac{1}{p}\|A_{a_0}\|_\infty} u^{\frac{1}{p}}
 -(b_{\sup}-\chi\mu)e^{\|A_{a_0}\|_{\infty}} u).
\end{align*}
Then $\tilde{u}^{s,\gamma}$ is a super-solution of
\begin{equation}
\label{E4-1}
{u}_t= \mathcal{A}_0 {u}
\end{equation}
for $t\ge s$ and $x\in\R$.

\smallskip

\noindent {\bf Step 3.} In this step, we construct sub-solutions of some equation related to the first equation in \eqref{P}.

For any $0<\gamma<2\sqrt{a_0}$,  choose $0<\varepsilon\ll 1$ such that
$$
4(a_0-4\varepsilon)-(\gamma+\varepsilon)^2\geq 2\varepsilon.
$$
By \cite[Lemma A1, Appendix]{NaRo1}, there exist is $r>0$ and  $h\in C^2(\R)$ satisfying that
\begin{equation*}
h(\rho)=0\,\, \text{for}\,\, \rho\in(-\infty, 0], \,\, h(\rho)=1\,\, \text{for}\, \,\rho\in[r,\infty),\, h'(\rho)>0 \,\, \text{for}\,\, 0<\rho<r
\end{equation*}
and that
\begin{equation}\label{E4-1-1}
-h''(\rho)+(\gamma+\varepsilon) h'(\rho)-(a_0-2\varepsilon)h<0 \quad \text{for}\,\, 0<\rho<r.
\end{equation}
Choose $R_0\gg 1$ and $0<\sigma\ll 1$ such that
$$
C_{R_0,p}\sigma^{\frac{1}{p}}+\varepsilon_{R_0}M<\varepsilon,
$$
$$
\sigma<m_0 e^{-\|A_{a_0}\|_\infty},
$$
and
$$\chi \lambda \varepsilon_{R_0} M+ \chi\lambda C_{R_0,p} e^{\frac{1}{p}\|A_{a_0}\|_\infty} \sigma^{\frac{1}{p}}
 +(b_{\sup}-\chi\mu)e^{\|A_{a_0}\|_{\infty}} \sigma<2\varepsilon.
 $$
Let
$$\underline{u}(t,x)=\sigma h(\gamma t -x)
$$
and
$$
B_s(t,x)=\min\{-\chi v_x(t,x-\gamma s+C(s)),\varepsilon\}.
$$
Then
\begin{align*}
\underline{u}_t=& \sigma \ga  h'(\ga t-x)  \cr
=& \sigma(\ga+\varepsilon)h'(\ga t -x)-\varepsilon\sigma  h'(\ga t-x)\cr
=&(\ga+\varepsilon)\sigma h'(\ga t -x) +\varepsilon \underline{u}_x\cr
\le & \sigma h''(\gamma t-x)+(a_0-2\varepsilon)\sigma  h(\ga t-x)+\varepsilon \underline{u}_x \quad \text{(by } \eqref{E4-1-1})\cr
=&\underline{u}_{xx}+\varepsilon \underline{u}_x +(a_0-2\varepsilon)\underline{u}\cr
\le & \underline{u}_{xx}+\min\{\varepsilon,-\chi v_x(t,x-\gamma s+C(s))\underline{u}_x+(a_0-2\varepsilon)\underline{u}\quad \text{(since }\ \underline{u}_x\leq 0)\cr
\le & \underline{u}_{xx}+B_{s}(t,x)\underline{u}_x+\underbar{u}\Big(a_0-\chi \lambda \varepsilon_R M- \chi\lambda C_{R,p} e^{\frac{1}{p}\|A_{a_0}\|_\infty} \underbar{u}^{\frac{1}{p}}
 -(b_{\sup}-\chi\mu)e^{\|A_{a_0}\|_{\infty}} \underbar{u}\Big).
\end{align*}
where $B_s(t,x)=\min\{\varepsilon,-\chi v_x(t,x-\gamma s+C(s))$.
Let
$$
\mathcal{\tilde A}_0u={u}_{xx} { + B_s(t,x)} {u}_x+{u}\Big(a_0-\chi \lambda \varepsilon_R M- \chi\lambda C_{R,p} e^{\frac{1}{p}\|A_{a_0}\|_\infty} {u}^{\frac{1}{p}}
 -(b_{\sup}-\chi\mu)e^{\|A_{a_0}\|_{\infty}} {u}\Big).
 $$
 Then $\underbar u(t,x)$ is a sub-solution of
 \begin{equation}
\label{E4-2}
{u}_t= \mathcal{\tilde A}_0 {u}
\end{equation}
for $t\ge s$ and $x\in\R$.

 Observe that \eqref{E4-1} and \eqref{E4-2} are the same at $(t,x)$ such that $B_s(t,x)= -\chi v_x(t,x-\gamma s+C(s))$,
and hence \eqref{E4-1} and \eqref{E4-2} are the same at $(t,x)$ such that $u(t,x)$ is sufficiently { small}.

\smallskip

\noindent {\bf Step 4.} In this step, we prove $\underbar{u}(t,x)\le \tilde{u}^{s,\gamma}(t,x)$ for $t\ge s$ and $x\in\R$.

First of all, for any given $t_0>s$, there are $x_0^-<x_0^+$ such that
\begin{equation}
\label{new-eq1}
\tilde{u}^{s,\gamma}(t,x)>\sigma\ge \underbar{u}(t,x) \quad {\rm for}\quad s\le t\le t_0,\,\, x\le x_0^-
\end{equation}
and
\begin{equation}
\label{new-eq2}
\tilde{u}^{s,\gamma}(t,x)<\sigma,\quad { \chi v_x(t,x-\gamma s+C(s))=-B_s(t,x)} \quad {\rm for}\quad s\le t\le t_0,\,\, x\ge x_0^+.
\end{equation}
Note that
\begin{equation}
\label{new-eq2-1}
 \underbar{u}(s,x)<\tilde{u}^{s,\gamma}(s,x) \quad {\rm \forall}\,\, x\in\R.
\end{equation}
Hence, there is $s<\tilde t_0\le t_0$ such that
$$
\underbar{u}(t,x_0^+)\le \tilde{u}^{s,\gamma}(t,x_0^+)\quad {\rm for}\quad s\le t\le \tilde t_0.
$$
Then by \eqref{new-eq2} and comparison principle for parabolic equations, we have
\begin{equation}
\label{new-eq3}
\underbar{u}(t,x)\le \tilde{u}^{s,\gamma}(t,x)\quad {\rm for}\quad s\le t\le \tilde t_0,\,\, x\ge x_0^+.
\end{equation}

Next, we prove that
\begin{equation}
\label{new-eq3-1}
\underline{u}(t,x)\leq \tilde{u}^{s,\gamma}(t,x)\quad {\rm for}\quad s\le t\le t_0,\,\,  x_0^- \le x\le x_0^+.
\end{equation}
Assume on the contrary that there is  $s<\tilde t_1\le t_0$ and $x_0^- \le\tilde x_1\le x_0^+$  such that
$$
\underline{u}(\tilde t_1,\tilde x_1)>\tilde{u}^{s,\gamma}(\tilde t_1,\tilde x_1).
$$
Let $$t_1:=\inf\{t\in(s,t_0]\,|\, \exists\ x_t\in[x_0^-,x_0^+]\ \text{satisfying} \ \underline{u}(t,x_t)\ge \tilde{u}^{s,\gamma}(t,x_t) \}. $$
 Note that $t_1>s$, otherwise there  would exist a sequence $\{(t_n,x_n)\}_{n\geq 1}$, $t_n\in(s,t_0]$, $x_n\in[x_0^-,x_0^+]$, and $\underline{u}(t_n,x_n)\ge\tilde{u}^{s,\gamma}(t_n,x_n)$ for every $n$ with $t_n\to s$. Without loss of generality, we may suppose that $x_n\to x\in[x_0^-,x_0^+]$. Thus we must have that $\underline{u}(s,x)\geq \tilde{u}^{s,\gamma}(s,x)$, which contradicts to \eqref{new-eq2-1}. Therefore by \eqref{new-eq1}, \eqref{new-eq2-1}, and \eqref{new-eq3} we have that $t_1\le \tilde{t}_1$,
\begin{equation}\label{new-eq6}
\underbar{u}(t,x)<\tilde{u}^{s,\gamma}(t,x)\quad \forall\,\, s\le t<t_1,\,\,\, x\in\R,
\end{equation}
and
\begin{equation}\label{new-eq7}
\sigma\ge \underbar{u}(t_1,x_1)=\tilde{u}^{s,\gamma}(t_1,x_1)\quad\text{for some}\,\, x_0^-\le x_1\le x_0^+.
\end{equation}
This implies that there is $0<\delta\ll 1$ such that $s<t_1-\delta$ and
$$
B_s(t,x)=-\chi v_x(t,x-\gamma s+C(s))\quad {\rm for}\quad t_1-\delta\le t\le t_1,\,\, |x-x_1|\le \delta.
$$
Note that
$$
\underbar{u}(t_1-\delta,x)< \tilde{u}^{s,\gamma}(t_1-\delta,x)\quad \forall\,\, x\in\R
$$
and
$$
\underbar{u}(t,x_1\pm \delta)\le \tilde{u}^{s,\gamma}(t,x\pm \delta)\quad \forall\,\, t_1-\delta\le t\le t_1.
$$
Then by comparison principle for parabolic equations, we have
$$
\underbar{u}(t_1,x_1)<\tilde{u}^{s,\gamma}(t_1,x_1),
$$
which contradicts to \eqref{new-eq7}. Hence \eqref{new-eq3-1} holds.


Now,   by \eqref{new-eq2}, \eqref{new-eq3-1}, and comparison principle for parabolic equations, we have
$$
\underbar{u}(t,x)\le \tilde u^{s,\gamma}(t,x)\quad \forall\,\, s\le t\le t_0,\,\, x\in\R.
$$
Then, since $t_0>s$ is arbitrary, we have
\begin{equation}
\label{new-eq5}
\underbar{u}(t,x)\le \tilde{u}^{s,\gamma}(t,x)\quad \forall \,\, t\ge s,\,\, x\in\R.
\end{equation}

\smallskip

\noindent {\bf Step 5.} In this step, we prove that \eqref{EE-0} holds.

Taking $x=\gamma t-\frac{r}{2}$ in \eqref{new-eq5}, we obtain that
$$
0<e^{-\|A_{a_0}\|_{\infty}}h(\frac{r}{2})\leq u(t,\gamma(t-s)-\frac{r}{2} +C(s))=U(t,\gamma(t-s)-\frac{r}{2} +C(s)-C(t)), \forall\ t>s.
$$
This together with the fact that $\lim_{x\to\infty}U(t,x)=0$ uniformly in $t\in\R$ implies  that there is $L\gg 1$ such that
$$
\gamma\leq \frac{C(t)-C(s)}{t-s}+\frac{L+r/2}{t-s}\quad {\rm for}\quad t>s,
$$
and then
$$
\gamma\leq \liminf_{t-s\to\infty}\frac{C(t)-C(s)}{t-s}.
$$
Since, $\gamma$ is arbitrarily chosen less that $2\sqrt{a_0}$, we infer that
$$
2\sqrt{a_0}\leq \liminf_{t-s\to\infty}\frac{C(t)-C(s)}{t-s},\quad  \forall\ 0<a_0<\underline{a}.
$$
This implies that \eqref{EE-0} holds.
The theorem is thus proved.
\end{proof}

\end{document}